\theoremstyle{plain}
\newtheorem{thm}{Theorem}
\newtheorem{lem}{Lemma}
\newtheorem{cor}{Corollary}
\newtheorem{prop}{Proposition}
\newtheorem{conj}{Conjecture}
\theoremstyle{definition}
\newtheorem{dfn}{Definition}
\newtheorem{ex}{Example}
\theoremstyle{remark}
\newtheorem{rem}{Remark}
\newtheorem*{ackn}{Acknowledgment}
\newcommand{\al}{\alpha}
\newcommand{\C}{\mathbb{C}}
\newcommand{\la}{\lambda}
\newcommand{\tla}{\widetilde{\lambda}}
\newcommand{\subjclass}[2][1991]{%
  \let\@oldtitle\@title%
  \gdef\@title{\@oldtitle\footnotetext{#1 \emph{MSC classes.} #2}}%
}
\newcommand{\keywords}[1]{%
  \let\@@oldtitle\@title%
  \gdef\@title{\@@oldtitle\footnotetext{\emph{Keywords:} #1.}}%
}
\title{Polynomial equations for additive functions II.}
\author{Eszter Gselmann and Gergely Kiss}
\begin{document}
%\nocite{*}

\maketitle
\keywords{homomorphism, derivation, higher order derivation, exponential polynomial, decomposable function}

\subjclass[2020]{43A45, 13N15, 16W20, 39B32, 39B72}

\begin{abstract}
In this sequence of work we investigate polynomial equations of additive functions. % This is the continuation of the paper entitled
%\emph{Polynomial equations for additive functions I.}, \href{https://arxiv.org/pdf/2211.03605}{(arXiv: 2211.03605)}.
We consider the solutions of equation
\[
 \sum_{i=1}^{n}f_{i}(x^{p_{i}})g_{i}(x)^{q_{i}}= 0
 \qquad
 \left(x\in \mathbb{F}\right),
\]
where $n$ is a positive integer, $\mathbb{F}\subset \mathbb{C}$ is a field,
$f_{i}, g_{i}\colon \mathbb{F}\to \mathbb{C}$ are additive functions and $p_i, q_i$ are positive integers for all $i=1, \ldots, n$. Using the theory of decomposable functions we describe the solutions as compositions of higher order derivations and field homomorphisms. In many cases we also give a tight upper bound for the order of the involved derivations. Moreover, we present the full description of the solutions in some important special cases, too.
\end{abstract}

\section{Introduction}

As a continuation of our former work \cite{GseKis22a}, in this paper the additive solutions of a class of functional equations are studied. According to the results, this class of equations  turns out to be appropriate for characterizing homomorphisms and derivations, resp. acting between fields. The question how special morphisms (such as homomorphisms and derivations) can be characterized among additive mappings in general are important from algebraic point of view, but also from the perspective of functional equations.

Concerning all the cases we consider here, the involved additive functions are defined on a field $\mathbb{F}\subset \mathbb{C}$ and have values in the complex field, therefore
we introduce the preliminaries only in this setting.

In what follows, we adopt the standard notations, that is, $\mathbb{N}$ and $\mathbb{C}$ denote the set of positive integers and the set of complex numbers, respectively.

Henceforth assume $\mathbb{F}\subset \mathbb{C}$ to be a field.

\begin{dfn}
We say that a function $f\colon \mathbb{F}\to \mathbb{C}$ is \emph{additive} if it fulfills
\[
 f(x+y)= f(x)+f(y)
 \qquad
 \left(x, y\in \mathbb{F}\right).
\]
An additive function $d\colon \mathbb{F} \to \mathbb{C}$ is termed to be a \emph{derivation} if it also fulfills
\[
 d(xy)= d(x)y+xd(y)
 \qquad
 \left(x, y\in \mathbb{F}\right).
\]
An additive function $\varphi\colon\mathbb{F}\to \mathbb{C}$ is said to be a homomorphism if it is multiplicative as well, in other words, besides additivity we also have
\[
 \varphi(xy)= \varphi(x)\varphi(y)
 \qquad
 \left(x, y\in \mathbb{F}\right).
\]
If $\mathbb{F}= \mathbb{C}$ and $\varphi$ is an isomorphism, then $\varphi$ is called a \emph{complex automorphism}.
\end{dfn}

In this paper we investigate the solutions of
\begin{equation}\label{Eq_mixed}
 \sum_{i=1}^{n}f_{i}(x^{p_{i}})g_{i}(x)^{q_{i}}= 0,
 \qquad
 \left(x\in \mathbb{F}\right).
\end{equation}

Our primary aim is to show that the solutions can be represented with the aid of compositions of homomorphisms and (higher order) derivations. Using that we give the description of the solutions in many cases.

Observe that the above equation contains as a special case several well-known equations that characterize (higher order) derivations.
%For example, additive solutions of equation
%\[
%f(x^{2})=2xf(x)
%\qquad
%\left(x\in \mathbb{F}\right)
%\]
%are derivations as well as the additive solutions of a slightly more general equation
%\[
%f(x^{n})- nx^{n-1}f(x)=0
%\qquad
%\left(x\in R\right),
%\]
%where $n$ is a fixed positive integer greater than two.
For instance, in \cite{Eba15, Eba17, GseKisVin18} the additive solutions of the equation
\[
\sum_{i=0}^{n}x^{i}f_{n+1-i}(x^{n+1-i})=0
\qquad
\left(x\in R\right)
\]
and also that of
\[
\sum_{i=0}^{n} f(x^{p_{i}})x^{q_{i}}=0
\qquad
\left(x\in R\right)
\]
were described on rings.

The core of the paper starts from the second section, where the results concerning \eqref{Eq_mixed} can be found. Firstly we prove some elementary yet important statements. The purpose of these lemmata is to figure out under what reasonable conditions we should make while studying these equations. For instance, the so-called Homogenization Lemma (see Lemma \ref{lemma_homogenization}) enables us to restrict ourselves to the case when for the parameters
\[
  p_{i}+q_{i}=N
  \qquad
  \left(i=1, \ldots, n\right)
 \]
hold.

Based on the remarks and examples that can be found at the beginning of Section \ref{SS2.1}, we will provide characterization theorems for equation \eqref{Eq_mixed} under the following conditions
\begin{enumerate}[{C}(i)]
 \item the positive integers $p_{1}, \ldots, p_{n}$ are arranged in a strictly increasing order, i.e., $p_1<\dots <p_{n}$;
 \item for all $i=1, \ldots, n$ we have $p_{i}+q_{i}=N$;
 \item for all $i, j \in \left\{ 1, \ldots, n\right\}$, $i\neq j$ we have $p_{i}\neq q_{j}$.
\end{enumerate}

Further, according to Lemma \ref{lem_equ_mixed}, the solutions of the above functional equations are sufficient to determine `up to equivalence'. This is because of the observation that if the functions $f_{1}, \ldots, f_{n}$ and $g_{1}, \ldots, g_{n}$ fulfill equation \eqref{Eq_mixed}, then for any automorphism $\varphi\colon \mathbb{C}\to \mathbb{C}$, the functions $\varphi \circ f_{1}, \ldots, \varphi \circ f_{n}$ and
$\varphi \circ g_{1},\ldots, \varphi \circ g_{n}$ also fulfill \eqref{Eq_mixed}.

Although equation \eqref{Eq_mixed} contains only one independent variable, we seek for the solutions in the class of additive functions. This property (i.e., additivity) will enable us to enlarge the number of independent variables in equation \eqref{Eq_mixed} from one to $N$. Briefly, this is the so-called Symmetrization method. After that, it is possible to prove that the functions $f_{1}, \ldots, f_{n}$ and $g_{1}, \ldots, g_{n}$ are decomposable functions on the multiplicative group $\mathbb{F}^{\times}$. From this we deduce that they are generalized exponential polynomials on the group $\mathbb{F}^{\times}$. Further, Theorem \ref{thm_mixed} says that for all $i=1, \ldots, n$, in the variety of the functions $f_{i}$ and $g_{i}$ there is exactly one exponential, namely $m_{i}$, provided that for the parameters $q_{1}, \ldots, q_{n}$ we assume $q_{i}< \dfrac{N}{2}$ for all $i=1, \ldots, n$. Our next aim is to get an upper bound for the degree of the involved higher order derivations that appear in the solutions. In connection to this, firstly we prove an alternative theorem, see Theorem \ref{T_mix_fo1}. After proving Theorem \ref{T_mix_fo1} for equation \eqref{Eq_mixed}, we  focus on some special cases. Here we consider equation \eqref{Eq_mixed} under the condition that all functions $f_i, g_i, \, i=1, \ldots, n$ are derivations or the linear functions and assume that
\begin{itemize}
    \item the order of the higher order derivations in the representations of the functions $f_{1}, \ldots, f_{n}$ is the same and we conclude that for all $i=1, \ldots, n$ we have $g_{i}(x)= \lambda_{i}x\; (x\in \mathbb{F})$, see Corollary \ref{cor7};
    \item the order of the higher order derivations in the representations of the functions $g_{1}, \ldots, g_{n}$ is the same and we conclude that for all $i=1, \ldots, n$ we have $g_{i}(x)= \lambda_{i}x\; (x\in \mathbb{F})$, see Corollary \ref{cor7};
    \item for all $i=1, \ldots, n$ we have $f_{i}(x)= c_{i} g_{i}(x)\; (x\in \mathbb{F})$ and we deduce that $f_{i}(x)= \lambda_{i} x\; (x\in \mathbb{F}, i=1, \ldots, n)$, see Corollary \ref{cor9}.
\end{itemize}
These results motivate Conjecture \ref{conj_mixed}, where we formulate that our conjecture is that the order of the higher order derivations in the representations of the functions $f_{1}, \ldots, f_{n}$ and $g_{1}, \ldots, g_{n}$ is at most $n-1$.
In a separate subsection, we close the last section with some special cases of equation \eqref{Eq_mixed}.

As we wrote above, this paper can be considered as a natural continuation or completion of our former one \cite{GseKis22a}. In that paper we considered in the same situation a rather similar equation, namely
\[
\tag{$\ast$}
\sum_{i=1}^{n}f_{i}(x^{p_{i}})g_{i}(x^{q_{i}})=0
\qquad
\left(x\in \mathbb{F}\right),
\]
where where $n$ is a positive integer, $\mathbb{F}\subset \mathbb{C}$ is a field and $f_{i}, g_{i}\colon \mathbb{F}\to \mathbb{C}$ are additive functions for all $i=1, \ldots, n$.

Besides equations \eqref{Eq_mixed} and $(\ast)$, the equation
\[
\tag{$\diamond$}
\sum_{i=1}^{n}f_{i}(x^{p_{i}})g_{i}(x)^{q_{i}}=0
\qquad
\left(x\in \mathbb{F}\right),
\]
can also be considered.
Unfortunately, the similar statements as for equations \eqref{Eq_mixed} and $(\ast)$,  \emph{are not satisfied} for equation $(\diamond)$, even though with assumptions C(i)--C(iii). After symmetrization we can deduce that the involved additive functions are linearly dependent. At the same time, in case of equation $(\diamond)$, \emph{arbitrary} additive functions can appear in the solution space. More precisely, let $f\colon \mathbb{F}\to \mathbb{C}$ be an \emph{arbitrary} additive function and suppose that  the complex constants $\lambda_{1}, \ldots, \lambda_{n}$ and $\mu_{1}, \ldots, \mu_{n}$ fulfill
\[
 \sum_{i=1}^{n}\lambda_{i}^{p_{i}}\mu_{i}^{q_{i}}=0,
\]
then the functions
\[
 f_{i}(x)= \lambda_{i}f(x)
 \qquad
 \text{and}
 \qquad
 g_{i}(x)= \mu_{i}f(x)
 \qquad
 \left(x\in \mathbb{F}\right)
\]
fulfill equation $(\diamond)$.

Obviously, not only the structure of these equations, but also the setup is rather similar. This is mainly due to the fact that these problems are special cases of a much more general problem. More concretely, the problem-raising, the solution methods of this paper and also that of the previous ones in \cite{Eba15, Eba17, EbaRieSah, GseKisVin18, GseKisVin19, GseKis22a} can be regarded as initial steps for the general problem below. Let $n$ be a positive integer, $P$ be a given multivariate complex polynomial and $Q_{1}, \ldots, Q_{n}\colon \mathbb{F}\to \mathbb{C}$ be polynomials. What can be said about the additive functions $f_{1}, \ldots, f_{n}\colon \mathbb{F}\to \mathbb{C}$ if they fulfill equation
\[
\tag{$\bullet$}
P(f_{1}(Q_{1}(x)), \ldots, f_{n}(Q_{n}(x)))=0
\]
for all $x\in \mathbb{F}$.
In connection with equation $(\bullet)$, it is obviously necessary to first clarify under what additional conditions it is expected that we can state something more about the unknown functions involved (obviously beyond additivity). Further, one of the heaviest difficulties with this equation is that it contains only one independent variable (this typically carries little information), but the number of unknown functions can be large. However, additivity provides the opportunity to increase the number of independent variables. These are done typically through proving Symmetrization lemmata. After that, our goal is usually to show that the involved additive functions have a `good connection' with the multiplicative structure, as well. More concretely, our objective is to show that these additive functions are decomposable functions on the multiplicative group $\mathbb{F}^{\times}$. This is the first point where (at least at the level of the proofs) it becomes clear that equations $(\ast)$ and \eqref{Eq_mixed} cannot be handled with the same method. Observe that compared to equation $(\ast)$, in equation \eqref{Eq_mixed} the role of the parameters is \emph{not equal}. It is clear from the comparison of the methods of \cite{GseKis22a} and this paper that this fact (the role of the parameters) has both advantages and disadvantages.

The asymmetry in the parameters makes it possible the statement of Lemma \ref{lem_decop_mixed} to hold under more general conditions (during the proof it is enough only to use that the parameters $p_{1}, \ldots, p_{n}$ are different.) At the same time, in case of equation $(\ast)$ it is possible to verify in one step that the functions $f_{1}, \ldots, f_{n}$ and $g_{1}, \ldots, g_{n}$ are decomposable functions on $\mathbb{F}^{\times}$. In case of equation \eqref{Eq_mixed} first we can only show that the functions $f_{1}, \ldots, f_{n}$ are decomposable (cf. Lemma \ref{lem_decop_mixed}). Based on this, finally the decomposability of the functions $g_{1}, \ldots, g_{n}$ can also be deduced, see Theorem \ref{thm_decop_mixed2}.
This and also the subsequent results show that in the case of equation \eqref{Eq_mixed}, new solution methods were necessary to develop.

\section{Reduction of the problem}\label{SS2.1}
%\subsection{Homogenization}

This part begins with some elementary, yet fundamental observations. Our aim here is to show that in any cases the original problem can be reduced to a more simpler equation. Here we follow the monograph \cite{Sze91}. Although the proofs of Lemmata \ref{lemma_homogenization} and \ref{lem_sym_mixed} are analogous to that of Lemmata 4 and 6 of \cite{GseKis22a}, for the sake of completeness, we present their proofs %of Lemmata \ref{lemma_homogenization} and \ref{lem_sym_mixed}
in the Appendix located  at the end of this work.

\begin{dfn}
 Let $G, S$ be commutative semigroups (written additively), $n\in \mathbb{N}$ and let $A\colon G^{n}\to S$ be a function.
 We say that $A$ is \emph{$n$-additive} if it is a homomorphism of $G$ into $S$ in each variable.
 If $n=1$ or $n=2$ then the function $A$ is simply termed to be \emph{additive}
 or \emph{biadditive}, respectively.
\end{dfn}

The \emph{diagonalization} or \emph{trace} of an $n$-additive
function $A\colon G^{n}\to S$ is defined as
 \[
  A^{\ast}(x)=A\left(x, \ldots, x\right)
  \qquad
  \left(x\in G\right).
 \]
As a direct consequence of the definition each $n$-additive function
$A\colon G^{n}\to S$ satisfies
\[
 A(x_{1}, \ldots, x_{i-1}, kx_{i}, x_{i+1}, \ldots, x_n)
 =
 kA(x_{1}, \ldots, x_{i-1}, x_{i}, x_{i+1}, \ldots, x_{n})
 \qquad
 \left(x_{1}, \ldots, x_{n}\in G\right)
\]
for all $i=1, \ldots, n$, where $k\in \mathbb{N}$ is arbitrary. The
same identity holds for any $k\in \mathbb{Z}$ provided that $G$ and
$S$ are groups, and for $k\in \mathbb{Q}$, provided that $G$ and $S$
are linear spaces over the rationals. This immediately implies that for the diagonalization of $A$
we have
\[
 A^{\ast}(kx)=k^{n}A^{\ast}(x)
 \qquad
 \left(x\in G\right).
\]

The above notion can also be extended for the case $n=0$ by letting
$G^{0}=G$ and by calling $0$-additive any constant function from $G$ to $S$.

\medskip

Based on the above notions and results our first lemma can be proved.

\begin{lem}[Homogenization]\label{lemma_homogenization}
 Let $n$ be a positive integer, $\mathbb{F}\subset \mathbb{C}$ be a field and
 $p_{1}, \ldots, p_{n}, q_{1}, \ldots, q_{n}$ be fixed positive integers.
 Assume that the additive functions $f_{1}, \ldots, f_{n}, g_{1}, \ldots, g_{n}\colon \mathbb{F}\to \mathbb{C}$ satisfy functional equation \eqref{Eq_mixed}, i.e.,
  \begin{equation*}
\sum_{i=1}^{n}f_{i}(x^{p_{i}})g_{i}(x)^{q_{i}}= 0
 \end{equation*}
 for each $x\in \mathbb{F}$.
 If the set $\left\{ p_{1}, \ldots, p_{n}\right\}$ has a partition $\mathcal{P}_{1}, \ldots, \mathcal{P}_{k}$ with the property
 \[
 \text{if } p_{\alpha}, p_{\beta} \in \mathcal{P}_{j} \text{ for a certain index $j$, then } p_{\alpha}+q_{\alpha}= p_{\beta}+q_{\beta},
 \]
then the system of equations
\[
 \sum_{p_{\alpha}\in \mathcal{P}_{j}} f_{\alpha}(x^{p_{\alpha}})g_{\alpha}(x))^{q_{\alpha}}=0
 \qquad
 \left(x\in \mathbb{F}, j=1, \ldots, k\right)
\]
is satisfied.
\end{lem}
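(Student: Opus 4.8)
The plan is to exploit additivity by replacing $x$ with a dilated argument $rx$ for a rational (or integer) parameter $r$, and then to separate the resulting polynomial identity in $r$ into its homogeneous components. First I would observe that since each $f_i$ is additive, $f_i((rx)^{p_i}) = f_i(r^{p_i} x^{p_i}) = r^{p_i} f_i(x^{p_i})$ for every $r \in \mathbb{Q}$ (here $r^{p_i} \in \mathbb{Q}$ acts as a scalar multiplier, using the $\mathbb{Q}$-homogeneity property of additive functions recalled just before the lemma), and similarly $g_i(rx)^{q_i} = (r\, g_i(x))^{q_i} = r^{q_i} g_i(x)^{q_i}$. Substituting $rx$ in place of $x$ in \eqref{Eq_mixed} therefore yields
\[
 \sum_{i=1}^{n} r^{p_i + q_i} f_i(x^{p_i}) g_i(x)^{q_i} = 0
 \qquad
 \left(x \in \mathbb{F},\ r \in \mathbb{Q}\right).
\]

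Next I would group the indices according to the partition. Writing $N_j$ for the common value $p_\alpha + q_\alpha$ shared by all $\alpha$ with $p_\alpha \in \mathcal{P}_j$ (well-defined by hypothesis), and setting
\[
 S_j(x) = \sum_{p_\alpha \in \mathcal{P}_j} f_\alpha(x^{p_\alpha}) g_\alpha(x)^{q_\alpha},
\]
the displayed identity becomes $\sum_{j=1}^{k} r^{N_j} S_j(x) = 0$ for all $r \in \mathbb{Q}$ and all fixed $x \in \mathbb{F}$. Since the values $N_1, \ldots, N_k$ are pairwise distinct positive integers, this is, for each fixed $x$, a polynomial identity in $r$ with only finitely many distinct monomials $r^{N_j}$; as it holds for infinitely many rationals $r$, the polynomial is identically zero, hence every coefficient vanishes: $S_j(x) = 0$ for each $j = 1, \ldots, k$. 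Since $x \in \mathbb{F}$ was arbitrary, this is exactly the claimed system of equations.

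The only genuinely delicate point is the step that passes from ``$\sum_j r^{N_j} S_j(x) = 0$ for all $r \in \mathbb{Q}$'' to ``each $S_j(x) = 0$''. For a single fixed $x$ this is the standard fact that a nonzero univariate polynomial over $\mathbb{C}$ (or $\mathbb{Q}$) has only finitely many roots, so a polynomial vanishing on all of $\mathbb{Q}$ must be the zero polynomial; alternatively one may invoke the nonvanishing of a Vandermonde determinant built from $k$ distinct values of $r$. I expect no real obstacle here — the argument is routine — but care is needed to make clear that the exponents $N_j$ are distinct (which is precisely why the partition is defined via the sums $p_\alpha + q_\alpha$) so that the monomials $r^{N_j}$ are linearly independent as functions of $r$. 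One should also note at the outset that the substitution $x \mapsto rx$ is legitimate because $\mathbb{F}$ is a field containing $\mathbb{Q}$, so $rx \in \mathbb{F}$ whenever $x \in \mathbb{F}$ and $r \in \mathbb{Q}$.
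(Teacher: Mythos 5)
Your proposal is correct and follows essentially the same route as the paper's own proof: substitute $rx$ with $r\in\mathbb{Q}$, use the $\mathbb{Q}$-homogeneity of the additive functions to extract the factor $r^{p_i+q_i}$, and then compare coefficients of the resulting polynomial identity in $r$. The point you flag --- that distinct classes of the partition carry distinct exponents $N_j$, so the monomials $r^{N_j}$ separate the sums $S_j(x)$ --- is taken for granted in the paper's argument as well, so nothing is missing relative to it.
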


\begin{rem}\label{rem2.1}
 The above lemma guarantees that \emph{ab initio}
 \[
  p_{i}+q_{i}=N
  \qquad
  \left(i=1, \ldots, n\right)
 \]
can be assumed. Otherwise, after using the above homogenization, we get a system of functional equations in which this condition is already fulfilled.
For instance, due to the above lemma, if the additive functions $f_{1}, \ldots, f_{5}\colon \mathbb{F}\to \mathbb{C}$ and $g_{1}, \ldots, g_{5}\colon \mathbb{F}\to \mathbb{C}$ satisfy equation
\[ f_{1}(x^{16})g_{1}(x)^{5}+f_{2}(x^{12})g_{2}(x)^{9}+f_{3}(x^{11})g_{3}(x)^{10}
+f_{4}(x^{3})g_{4}(x)^{7}+ f_{5}(x^{2})g_{4}(x)^{8}
 =
 0
 \qquad
 \left(x\in \mathbb{F}\right)\]
then
the equations
\[
 f_{1}(x^{16})g_{1}(x)^{5}+f_{2}(x^{12})g_{2}(x)^{9}+f_{3}(x^{11})g_{3}(x)^{10}=0
 \qquad
 \left(x\in \mathbb{F}\right)
 \]
and
\[
   f_{4}(x^{3})g_{4}(x)^{7}+ f_{5}(x^{2})g_{4}(x)^{8}=0
   \qquad
   \left(x\in \mathbb{F}\right)
\]
are also fulfilled (separately).
\end{rem}

\begin{rem}\label{rem2.3}
 At first glance the assumption that $p_{1}, \ldots, p_{n}$ are different seems a reasonable and sufficient supposition.
 Clearly, if the parameters $p_i$ are not necessarily different, then we cannot expect anything special for the form of the involved additive functions.
 To see this, let $p$ and $q$ be positive integers and $f\colon \mathbb{F}\to \mathbb{C}$ be an \emph{arbitrary} additive function, $\lambda$ be a complex number such that $1+\lambda^{q}=0$ and
 define the complex-valued functions $f_{1}, g_{1}, f_{2}, g_{2}$ on $\mathbb{F}$ by
 \[
  f_{1}(x)= f(x) \quad
  g_{1}(x)= f(x) \quad
  f_{2}(x)= f(x) \quad
  g_{2}(x)= \lambda \cdot f(x)
  \qquad
  \left(x\in \mathbb{F}\right).
 \]
An immediate computation shows that we have
\[
 f_{1}(x^{p})g_{1}(x)^{q}+f_{2}(x^{p})g_{2}(x)^{q}= 0
 \qquad
 \left(x\in \mathbb{F}\right).
\]
Note however, that $p_i=q_j$ for some $i,j\in \{1, \dots, n\}$ can be handled, that we emphasis at some points of this paper. On the other hand, to avoid further difficulties throughout of the work we simple assume that $p_1,\dots p_n, q_1, \dots, q_n$ are distinct positive integers.
\end{rem}

In view of the above remarks, from now on, the following assumptions are adopted.

\begin{enumerate}[{C}(i)]
 \item the positive integers $p_{1}, \ldots, p_{n}$ are arranged in a strictly increasing order, i.e., $p_1<\dots <p_{n}$;
 \item for all $i=1, \ldots, n$ we have $p_{i}+q_{i}=N$;
 \item for all $i, j \in \left\{ 1, \ldots, n\right\}$, $i\neq j$ we have $p_{i}\neq q_{j}$.
\end{enumerate}

\begin{rem}\label{rem2.4}
 Define the relation $\sim$ on $\mathbb{F}^{\mathbb{C}}$ by \[
  f\sim g \quad
  \text{if and only if there exists an automorphism $\varphi \colon \mathbb{C}\to \mathbb{C}$
  such that $\varphi \circ f=g$. }
 \]
Obviously $\sim$ is an equivalence relation on $\mathbb{F}^{\mathbb{C}}$ that induces a partition on $\mathbb{F}^{\mathbb{C}}$.
\end{rem}

\begin{lem}[Equivalence]\label{lem_equ_mixed}
 Let $n$ be a positive integer, $\mathbb{F}\subset \mathbb{C}$ be a field and
 $p_{1}, \ldots, p_{n}, q_{1}, \ldots, q_{n}$ be fixed positive integers fulfilling the conditions C(i)-C(iii) of Remark \ref{rem2.3}.
 Assume that the additive functions $f_{1}, \ldots, f_{n},$ $g_{1}, \ldots, g_{n}\colon \mathbb{F}\to \mathbb{C}$ satisfy functional equation
 \eqref{Eq_mixed}, that is, we have
 \[
 \sum_{i=1}^{n}f_{i}(x^{p_{i}})g(x)^{q_{i}}=0
 \]
 for all $x\in \mathbb{F}$.
 Then for an arbitrary automorphism $\varphi \colon \mathbb{C}\to \mathbb{C}$ the functions
 $\varphi \circ f_{1}, \ldots, \varphi \circ f_{n}, \varphi \circ g_{1}, \ldots, \varphi \circ g_{n}$ also fulfill equation \eqref{Eq_mixed}.
\end{lem}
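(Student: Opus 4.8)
The plan is to verify directly that applying an automorphism $\varphi\colon\mathbb{C}\to\mathbb{C}$ to each of the functions $f_i,g_i$ preserves equation \eqref{Eq_mixed}. The key observation is that $\varphi$ is a ring homomorphism, so it commutes with all the algebraic operations appearing in the equation: addition (the outer sum), multiplication (the product $f_i(x^{p_i})\cdot g_i(x)^{q_i}$), and taking powers (the exponent $q_i$). Since $\varphi$ also fixes $0$, the vanishing of the sum is preserved as well.

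Concretely, I would first note that for each fixed $x\in\mathbb{F}$ and each $i$, the composite $\varphi\circ f_i$ applied to $x^{p_i}$ is just $\varphi\bigl(f_i(x^{p_i})\bigr)$, and likewise $(\varphi\circ g_i)(x)^{q_i}=\varphi\bigl(g_i(x)\bigr)^{q_i}=\varphi\bigl(g_i(x)^{q_i}\bigr)$, the last equality because $\varphi$ is multiplicative. (One should also remark, for completeness, that $\varphi\circ f_i$ and $\varphi\circ g_i$ are again additive as compositions of an additive map with the additive $\varphi$, so they are legitimate candidates for the equation.) Then, again using multiplicativity of $\varphi$,
\[
(\varphi\circ f_i)(x^{p_i})\cdot(\varphi\circ g_i)(x)^{q_i}
=\varphi\bigl(f_i(x^{p_i})\bigr)\cdot\varphi\bigl(g_i(x)^{q_i}\bigr)
=\varphi\bigl(f_i(x^{p_i})g_i(x)^{q_i}\bigr).
\]
Summing over $i=1,\ldots,n$ and using additivity of $\varphi$ gives
\[
\sum_{i=1}^{n}(\varphi\circ f_i)(x^{p_i})\cdot(\varphi\circ g_i)(x)^{q_i}
=\varphi\!\left(\sum_{i=1}^{n}f_i(x^{p_i})g_i(x)^{q_i}\right)
=\varphi(0)=0,
\]
where the middle equality uses that $f_1,\ldots,f_n,g_1,\ldots,g_n$ satisfy \eqref{Eq_mixed} and the last uses $\varphi(0)=0$. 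Since $x\in\mathbb{F}$ was arbitrary, the functions $\varphi\circ f_1,\ldots,\varphi\circ f_n,\varphi\circ g_1,\ldots,\varphi\circ g_n$ satisfy \eqref{Eq_mixed}.

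There is essentially no obstacle here: the statement is a routine consequence of $\varphi$ being a field automorphism of $\mathbb{C}$, and the conditions C(i)--C(iii) play no role in the argument (they are carried along only because the lemma is stated within that standing framework). The only point requiring a word of care is making sure that $\varphi\circ g_i$ is the object being raised to the power $q_i$ — i.e.\ that the notation $(\varphi\circ g_i)(x)^{q_i}$ means $\bigl(\varphi(g_i(x))\bigr)^{q_i}$ — and that this matches $\varphi\bigl(g_i(x)^{q_i}\bigr)$ by multiplicativity; everything else is a direct unwinding of definitions.
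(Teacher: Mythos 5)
Your proof is correct and is exactly the argument the paper has in mind (the paper treats this lemma as an immediate consequence of $\varphi$ being a ring automorphism, just as you do: pull $\varphi$ inside each product and power by multiplicativity, then through the sum by additivity, and use $\varphi(0)=0$). Your side remarks — that $\varphi\circ f_i$, $\varphi\circ g_i$ remain additive and that C(i)--C(iii) are not actually used — are also accurate.
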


\begin{rem}
 We can always restrict ourselves to the case when all the involved functions are non-identically zero. Otherwise, the number of the terms
 appearing in equation \eqref{Eq_mixed} can be reduced.
\end{rem}
%%%%%%%%%%%%%%%%%
\medskip

%%%%%%%%%%%%%%%%%%
One of the most important theoretical results concerning
multiadditive functions is the so-called \emph{Polarization
formula}, that briefly expresses that every $n$-additive symmetric
function is \emph{uniquely} determined by its diagonalization under
some conditions on the domain as well as on the range. Suppose that
$G$ is a commutative semigroup and $S$ is a commutative group. The
action of the {\emph{difference operator}} $\Delta$ on a function
$f\colon G\to S$ is defined by the formula
\[\Delta_y f(x)=f(x+y)-f(x)
\qquad
\left(x, y\in G\right). \]
Note that the addition in the argument of the function is the
operation of the semigroup $G$ and the subtraction means the inverse
of the operation of the group $S$.
The superposition of several difference operators will be denoted shortly
\[
\Delta_{y_1 \ldots y_{n}} f = \Delta_{y_{1}} \Delta_{y_{2}} \ldots \Delta_{y_{n}} f
\qquad
\left( n \in\mathbb{N}\right).
 \]

\begin{thm}[Polarization formula]\label{Thm_polarization}
 Suppose that $G$ is a commutative semigroup, $S$ is a commutative group, $n\in \mathbb{N}$.
 If $A\colon G^{n}\to S$ is a symmetric, $n$-additive function, then for all
 $x, y_{1}, \ldots, y_{m}\in G$ we have
 \[
  \Delta_{y_{1}, \ldots, y_{m}}A^{\ast}(x)=
  \left\{
  \begin{array}{rcl}
   0 & \text{ if} & m>n \\
   n!A(y_{1}, \ldots, y_{m}) & \text{ if}& m=n.
  \end{array}
  \right.
 \]

\end{thm}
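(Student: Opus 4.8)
The plan is to prove both cases at once by induction on the order $n$ of the multiadditive function, the engine being the expansion of a \emph{single} difference operator applied to $A^{\ast}$. First I would record that, since $A$ is $n$-additive and symmetric,
\[
 A^{\ast}(x+y) = A(x+y, \ldots, x+y) = \sum_{k=0}^{n} \binom{n}{k}\, A(\underbrace{x, \ldots, x}_{n-k}, \underbrace{y, \ldots, y}_{k}),
\]
where multiadditivity expands the middle expression into a sum over the $2^{n}$ choices of $x$ or $y$ in each slot and symmetry collapses the $\binom{n}{k}$ choices with exactly $k$ entries equal to $y$ into a single term. Subtracting $A^{\ast}(x)$ leaves
\[
 \Delta_{y} A^{\ast}(x) = \sum_{k=1}^{n} \binom{n}{k}\, A(\underbrace{x, \ldots, x}_{n-k}, \underbrace{y, \ldots, y}_{k}).
\]
The key point I would stress is that, for each fixed $k \geq 1$ and fixed $y$, the map $x \mapsto A(x, \ldots, x, y, \ldots, y)$ (with $n-k$ entries equal to $x$) is the diagonalization of a \emph{symmetric $(n-k)$-additive} function, namely the one obtained from $A$ by freezing its last $k$ variables at $y$. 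So one difference operator converts $A^{\ast}$ into a $\mathbb{Z}$-linear combination of diagonalizations of symmetric multiadditive functions of strictly smaller orders $n-1, n-2, \ldots, 0$; iterating this and tracking orders is what drives the induction.

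Next I would dispatch the case $m > n$ by induction on $n$. The base case $n = 0$ is immediate: $A^{\ast}$ is then constant, so every nonempty iterated difference of it vanishes, and $m \geq 1 > 0$. For $n \geq 1$, I would write $\Delta_{y_{1} \ldots y_{m}} A^{\ast} = \Delta_{y_{2} \ldots y_{m}} \bigl( \Delta_{y_{1}} A^{\ast} \bigr)$, substitute the one-step formula above, and note that each summand is the diagonalization of a symmetric multiadditive function of order $n - k \leq n - 1$, to which the $m - 1$ remaining difference operators are applied; since $m > n$ forces $m - 1 > n - 1 \geq n - k$, the inductive hypothesis annihilates every term, giving $\Delta_{y_{1} \ldots y_{m}} A^{\ast} = 0$.

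Then I would handle $m = n$, again by induction on $n$, the case $n = 1$ being just $\Delta_{y_{1}} A^{\ast}(x) = A(x + y_{1}) - A(x) = A(y_{1}) = 1!\,A(y_{1})$ by additivity. For $n \geq 2$ I would peel off $\Delta_{y_{1}}$ to obtain $\Delta_{y_{1} \ldots y_{n}} A^{\ast} = \sum_{k=1}^{n} \binom{n}{k}\, \Delta_{y_{2} \ldots y_{n}} \bigl( x \mapsto A(\underbrace{x, \ldots, x}_{n-k}, \underbrace{y_{1}, \ldots, y_{1}}_{k}) \bigr)$. For $k \geq 2$ the inner function has order $n - k \leq n - 2 < n - 1$, so the already-proved $m > n$ case kills those terms; only $k = 1$ survives, where the inner function is the diagonalization of the symmetric $(n-1)$-additive map $(x_{1}, \ldots, x_{n-1}) \mapsto A(x_{1}, \ldots, x_{n-1}, y_{1})$, and the inductive hypothesis turns $\Delta_{y_{2} \ldots y_{n}}$ of it into $(n-1)!\,A(y_{2}, \ldots, y_{n}, y_{1}) = (n-1)!\,A(y_{1}, \ldots, y_{n})$, using symmetry once more. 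Multiplying by $\binom{n}{1} = n$ yields $n!\,A(y_{1}, \ldots, y_{n})$, as claimed.

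I expect the only real friction to be bookkeeping rather than conceptual: one must check that the difference operators $\Delta_{y_{i}}$ commute (clear, since $S$ is a commutative group, so they may be removed in any convenient order), and one must verify that after freezing several variables at (possibly distinct) points $y_{i}$ the resulting function is still symmetric and multiadditive of exactly the advertised order, so that the induction hypothesis genuinely applies at each step. Everything else reduces to the binomial identity displayed above — which is exactly where the symmetry hypothesis on $A$ enters, and without which the statement would fail.
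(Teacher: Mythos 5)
Your proof is correct: the binomial expansion of $A^{\ast}(x+y)$ obtained from symmetry and $n$-additivity, followed by induction on the order (handling $m>n$ first and then peeling off one difference operator in the case $m=n$), is exactly the classical argument, and the commutation of the $\Delta_{y_i}$ and the symmetry/multiadditivity of the frozen functions are checked correctly. The paper itself states the Polarization formula without proof, citing the monograph \cite{Sze91}, where essentially this same inductive proof appears; the only minor bookkeeping point is that your induction should formally be phrased as complete (strong) induction on the order, since freezing $k\geq 2$ variables produces functions of order $n-k<n-1$ — which your argument already implicitly accommodates.
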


%\begin{cor}
% Suppose that $G$ is a commutative semigroup, $S$ is a commutative group, $n\in \mathbb{N}$.
% If $A\colon G^{n}\to S$ is a symmetric, $n$-additive function, then for all $x, y\in G$
% \[
%  \Delta^{n}_{y}A^{\ast}(x)=n!A^{\ast}(y).
%\]
%\end{cor}

\begin{lem}
\label{mainfact}
  Let $n\in \mathbb{N}$ and suppose that the multiplication by $n!$ is surjective in the commutative semigroup $G$ or injective in the commutative group $S$. Then for any symmetric, $n$-additive function $A\colon G^{n}\to S$, $A^{\ast}\equiv 0$ implies that
  $A$ is identically zero, as well.
\end{lem}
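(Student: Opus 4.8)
The plan is to feed $A^{\ast}\equiv 0$ into the Polarization formula (Theorem \ref{Thm_polarization}) with $m=n$. For arbitrary $x, y_{1}, \ldots, y_{n}\in G$ that formula gives
\[
 n!\,A(y_{1}, \ldots, y_{n}) = \Delta_{y_{1}, \ldots, y_{n}}A^{\ast}(x).
\]
Since $A^{\ast}$ is identically zero, every iterated difference $\Delta_{y_{1}, \ldots, y_{n}}A^{\ast}$ vanishes identically, so we obtain
\[
 n!\,A(y_{1}, \ldots, y_{n}) = 0 \qquad \left(y_{1}, \ldots, y_{n}\in G\right).
\]
This single identity is the whole engine; it remains to cancel the factor $n!$ under either of the two hypotheses.

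If multiplication by $n!$ is injective in $S$, then $n!\,A(y_{1}, \ldots, y_{n}) = 0 = n!\cdot 0$ forces $A(y_{1}, \ldots, y_{n}) = 0$ directly, for every choice of arguments. If instead multiplication by $n!$ is surjective in $G$, fix $x_{1}, \ldots, x_{n}\in G$ and pick $z_{1}\in G$ with $n!\,z_{1}=x_{1}$ (meaning the $n!$-fold sum of $z_{1}$ with itself). Using that $A$ is $n$-additive, hence additive in its first variable, and the consequence $A(kx_{1}',x_{2}, \ldots, x_{n}) = k\,A(x_{1}',x_{2}, \ldots, x_{n})$ for $k\in\mathbb{N}$ recorded just after the definition of $n$-additivity, we get
\[
 A(x_{1}, x_{2}, \ldots, x_{n}) = A(n!\,z_{1}, x_{2}, \ldots, x_{n}) = n!\,A(z_{1}, x_{2}, \ldots, x_{n}) = 0,
\]
the last equality being the displayed identity above applied with $y_{1}=z_{1}$, $y_{j}=x_{j}$ for $j\ge 2$. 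In both cases $A\equiv 0$, which is the claim.

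There is essentially no serious obstacle here: the content is entirely carried by the Polarization formula, and the only thing to be careful about is that surjectivity is exploited in the \emph{domain} (reducing one slot to an $n!$-multiple and pulling the scalar out by multiadditivity) while injectivity is exploited in the \emph{range} (cancelling $n!$ after the fact). One should also note in passing that the statement and proof are vacuous-free for $n=0$ only if one adopts the convention from the excerpt that $0$-additive functions are the constants and $0!=1$, but for $n\ge 1$ the argument above is exactly as written.
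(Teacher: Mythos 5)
Your proof is correct and is exactly the intended argument: the paper states this lemma as an immediate consequence of the Polarization formula (Theorem \ref{Thm_polarization}) with $m=n$, and your two-case treatment — cancelling $n!$ in $S$ by injectivity, or writing $x_{1}=n!\,z_{1}$ in $G$ and pulling the factor out by $n$-additivity — is the standard way to finish. No gaps; the remark about $n=0$ is moot since $\mathbb{N}$ denotes the positive integers here.
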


\begin{dfn}
 Let $G$ and $S$ be commutative semigroups, a function $p\colon G\to S$ is called a \emph{generalized polynomial} from $G$ to $S$, if it has a representation as the sum of diagonalization of symmetric multi-additive functions from $G$ to $S$. In other words, a function $p\colon G\to S$ is a
 generalized polynomial if and only if, it has a representation
 \[
  p= \sum_{k=0}^{n}A^{\ast}_{n},
 \]
where $n$ is a nonnegative integer and $A_{k}\colon G^{k}\to S$ is a symmetric, $k$-additive function for each
$k=0, 1, \ldots, n$. In this case we also say that $p$ is a generalized polynomial \emph{of degree at most $n$}.

Let $n$ be a nonnegative integer, functions $p_{n}\colon G\to S$ of the form
\[
 p_{n}= A_{n}^{\ast},
\]
where $A_{n}\colon G^{n}\to S$ are the so-called \emph{generalized monomials of degree $n$}.
\end{dfn}

In this subsection $(G, \cdot)$ is assumed to be  a commutative group (written multiplicatively).

During the proof of Lemma \ref{lem_sym_mixed} we use the following lemma from \cite{GseKis22a}.

\begin{lem}\label{lem_monom}
 Let $k$ and $n$ be positive integers, $\mathbb{F}\subset \mathbb{C}$ be a field and
 $m_{1}, \ldots, m_{n}\colon \mathbb{F}\to \mathbb{C}$ be generalized monomials of degree $k$.
 If
 \[
  \sum_{i=1}^{n}m_{i}(x)=0
 \]
holds for all $x\in \mathbb{F}$, then
\[
 \sum_{i=1}^{n}M_{i}(x_{1}, \ldots, x_{k})=0
\]
is fulfilled for all $x_{1}, \ldots, x_{k}$, where for all $i=1, \ldots, n$, the mapping
$M_{i}\colon \mathbb{F}^{k}\to \mathbb{C}$ is the uniquely determined symmetric, $k$-additive function such that
\[
 M_{i}(x, \ldots, x)= m_{i}(x)
 \qquad
 \left(x\in \mathbb{F}\right).
\]
\end{lem}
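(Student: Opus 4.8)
The plan is to reduce the one-variable monomial identity to a multi-additive identity by iterating the polarization/difference-operator machinery already set up in the excerpt. First I would recall that each $m_i$, being a generalized monomial of degree $k$, equals the diagonalization $M_i^\ast$ of a uniquely determined symmetric, $k$-additive function $M_i\colon \mathbb{F}^k\to\mathbb{C}$ (this is the content of the definition of generalized monomial together with Lemma~\ref{mainfact}, which gives uniqueness because multiplication by $k!$ is injective on $\mathbb{C}$). So the hypothesis $\sum_{i=1}^n M_i^\ast(x)=0$ for all $x\in\mathbb{F}$ is an identity among diagonalizations of symmetric $k$-additive maps, and the goal is to lift it to an identity among the $k$-additive maps themselves.

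The key step is to apply the iterated difference operator $\Delta_{y_1\ldots y_k}$ to both sides of $\sum_{i=1}^n M_i^\ast(x)=0$. The right-hand side stays $0$. For the left-hand side, by the Polarization formula (Theorem~\ref{Thm_polarization}) applied with $m=n=k$ (in the notation of that theorem), we get $\Delta_{y_1\ldots y_k}M_i^\ast(x)=k!\,M_i(y_1,\ldots,y_k)$, which is in particular independent of $x$. Summing over $i$ yields
\[
 k!\sum_{i=1}^n M_i(y_1,\ldots,y_k)=0
 \qquad
 \left(y_1,\ldots,y_k\in\mathbb{F}\right).
\]
Since $k!\neq 0$ and we are working over $\mathbb{C}$ (a field of characteristic zero, so division by $k!$ is legitimate), we may cancel $k!$ and obtain exactly $\sum_{i=1}^n M_i(y_1,\ldots,y_k)=0$ for all $y_1,\ldots,y_k\in\mathbb{F}$, which is the assertion. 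One should note that the difference operator here is taken with respect to the additive group structure of $\mathbb{F}$, which is the structure on which the $m_i$ are additive/monomial, so the hypotheses of Theorem~\ref{Thm_polarization} are met with $G=S=(\mathbb{F},+)$ or $(\mathbb{C},+)$.

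I do not expect a genuine obstacle here: the statement is essentially a packaging of the polarization formula, and the only things to be careful about are (a) invoking the uniqueness of the symmetric $k$-additive function representing each $m_i$, which is guaranteed by Lemma~\ref{mainfact} since multiplication by $k!$ is injective in $\mathbb{C}$, and (b) checking that applying $\Delta_{y_1\ldots y_k}$ term by term is valid, which it is because $\Delta$ is linear. The mild subtlety worth a sentence in the write-up is that one wants the $M_i$ in the conclusion to be precisely the ones attached to the $m_i$ in the hypothesis, not merely some symmetric $k$-additive maps summing to zero; this is automatic from the computation above, since $\Delta_{y_1\ldots y_k}M_i^\ast = k!\,M_i$ on the nose.
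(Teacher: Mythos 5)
Your proof is correct, and it is the natural argument: the paper does not reprove this lemma but imports it from \cite{GseKis22a}, where it is established by exactly this polarization device, namely applying $\Delta_{y_{1}\ldots y_{k}}$ to the identity $\sum_{i}M_{i}^{\ast}=0$, invoking Theorem \ref{Thm_polarization} with $m=n=k$ to get $k!\sum_{i}M_{i}(y_{1},\ldots,y_{k})=0$, and cancelling $k!$ over $\mathbb{C}$. Your remarks on uniqueness of the $M_i$ via Lemma \ref{mainfact} and on the linearity of the difference operator are the right points to flag, so nothing is missing.
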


\begin{lem}[Symmetrization]\label{lem_sym_mixed}
 Let $n$ be a positive integer, $\mathbb{F}\subset \mathbb{C}$ be a field and
 $p_{1}, \ldots, p_{n}, q_{1}, \ldots, q_{n}$ be fixed positive integers fulfilling conditions C(ii).
Assume that the additive functions $f_{1}, \ldots, f_{n}, g_{1}, \ldots, g_{n}\colon \mathbb{F}\to \mathbb{C}$ satisfy functional equation
 \eqref{Eq_mixed}, i.e.,
 \[
 \sum_{i=1}^{n}f_{i}(x^{p_{i}})g(x)^{q_{i}}=0
 \]
 holds  for each $x\in \mathbb{F}$. Then
 \[
  \frac{1}{N!} \sum_{\sigma \in \mathscr{S}_{N}}\sum_{i=1}^{n} f_{i}(x_{\sigma(1)} \cdots x_{\sigma(p_{i})}) \cdot g_{i}(x_{\sigma(p_{i}+1)}) \cdots g_{i}(x_{\sigma(N)})=0
 \]
holds for all $x_{1}, \ldots, x_{N}\in \mathbb{F}$.
\end{lem}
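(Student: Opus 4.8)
The plan is to recognize each summand $m_{i}(x):=f_{i}(x^{p_{i}})g_{i}(x)^{q_{i}}$ as a generalized monomial of degree $N$ on the additive group $(\mathbb{F},+)$, and then to apply Lemma \ref{lem_monom} with $k=N$. Indeed, since by hypothesis C(ii) we have $p_{i}+q_{i}=N$ for every $i$, all the terms $m_{1},\dots ,m_{n}$ have the same degree, so their sum is again a generalized monomial of degree $N$; as this sum vanishes identically, Lemma \ref{lem_monom} will immediately produce the symmetrized identity, once we identify the polarizations explicitly.

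First I would verify that $m_{i}$ is a generalized monomial of degree $N$ by exhibiting the associated symmetric $N$-additive function. Define $\Phi_{i}\colon \mathbb{F}^{N}\to\mathbb{C}$ by
\[
 \Phi_{i}(x_{1},\dots ,x_{N})=\frac{1}{N!}\sum_{\sigma\in\mathscr{S}_{N}} f_{i}(x_{\sigma(1)}\cdots x_{\sigma(p_{i})})\, g_{i}(x_{\sigma(p_{i}+1)})\cdots g_{i}(x_{\sigma(N)}).
\]
This function is symmetric by construction. It is also $N$-additive: when we fix all variables except one, say $x_{j}$, condition C(ii) guarantees that $x_{j}$ appears exactly once in each term of the sum, either as one of the $p_{i}$ factors inside the argument of $f_{i}$ — in which case additivity in $x_{j}$ follows from the distributive law in $\mathbb{F}$ together with the additivity of $f_{i}$ — or as the argument of one of the factors $g_{i}(\cdot)$, in which case it follows from the additivity of $g_{i}$. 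Finally, setting $x_{1}=\dots =x_{N}=x$ collapses every summand to $f_{i}(x^{p_{i}})g_{i}(x)^{q_{i}}$, so $\Phi_{i}^{\ast}=m_{i}$. Hence $m_{i}$ is a generalized monomial of degree $N$, and $\Phi_{i}$ is the unique symmetric $N$-additive function with diagonalization $m_{i}$; uniqueness holds by Lemma \ref{mainfact}, since multiplication by $N!$ is injective on $\mathbb{C}$.

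Next, since $\sum_{i=1}^{n}m_{i}(x)=0$ for all $x\in\mathbb{F}$ and each $m_{i}$ is a generalized monomial of degree $N$, Lemma \ref{lem_monom} (with $k=N$) yields $\sum_{i=1}^{n}M_{i}(x_{1},\dots ,x_{N})=0$ for all $x_{1},\dots ,x_{N}\in\mathbb{F}$, where $M_{i}$ is the unique symmetric $N$-additive function with $M_{i}(x,\dots ,x)=m_{i}(x)$. By the uniqueness just noted, $M_{i}=\Phi_{i}$, and substituting the explicit formula for $\Phi_{i}$ gives precisely the claimed identity. The only point requiring care is the verification that $\Phi_{i}$ is symmetric and $N$-additive — routine, but genuinely dependent on C(ii), which ensures that each of the $N$ variables occurs exactly once in every term; after that the argument is a direct application of Lemma \ref{lem_monom}.
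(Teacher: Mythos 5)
Your proof is correct and follows essentially the same route as the paper's: both recognize each summand $x\mapsto f_{i}(x^{p_{i}})g_{i}(x)^{q_{i}}$ as a generalized monomial of degree $N$ (using C(ii)), identify its polarization with the symmetrized average $F_{i}$, and conclude via Lemma \ref{lem_monom}. Your version merely spells out the $N$-additivity check and the uniqueness of the polarization (via Lemma \ref{mainfact}) in more detail than the paper does.
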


\section{Preliminary results}
Decomposable functions will play a key role in the sequel. This notion was introduced by E.~Shulman in \cite{Shu10}.
Besides this we heavily rely on the work of Laczkovich \cite{Lac19}.

\begin{dfn}
 Let $G$ be a group and $n\in \mathbb{N}, n\geq 2$.
 A function $F\colon G^{n}\to \mathbb{C}$ is said to be
 \emph{decomposable} if it can be written as a finite sum of products
 $F_{1}\cdots F_{k}$, where all $F_{i}$ depend on disjoint sets of variables.
\end{dfn}

\begin{rem}
 Without loss of generality we can suppose that $k=2$ in the above definition, that is,
 decomposable functions are those mappings that can be written in the form
 \[
  F(x_{1}, \ldots, x_{n})= \sum_{E}\sum_{j}A_{j}^{E}B_{j}^{E}
 \]
where $E$ runs through all non-void proper subsets of $\left\{1,
\ldots, n\right\}$ and for each $E$ and $j$ the function $A_{j}^{E}$
depends only on variables $x_{i}$ with $i\in E$, while $B_{j}^{E}$
depends only on the variables $x_{i}$ with $i\notin E$.
\end{rem}

The theorem below is about the connection between decomposable functions and generalized exponential polynomials, see Laczkovich \cite{Lac19}.

\begin{thm}\label{thm_Laczk}
Let $G$ be a commutative topological  semigroup with unit.
A continuous function $f\colon G\to \mathbb{C}$ is a generalized exponential polynomial
\emph{if and only if} there is a positive integer $n\geq 2$ such that the mapping
\[
G^{n} \ni (x_{1}, \ldots, x_{n}) \longmapsto f(x_1+\cdots+ x_n)
\]
is decomposable.
\end{thm}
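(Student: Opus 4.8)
The plan is to establish the two implications separately; the forward direction is an elementary expansion, while the reverse direction is the substantive one.

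\smallskip

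\emph{Generalized exponential polynomial $\Rightarrow$ decomposable.} Suppose $f=\sum_{j=1}^{N}p_{j}m_{j}$, where each $m_{j}\colon G\to\mathbb{C}$ is an exponential and each $p_{j}\colon G\to\mathbb{C}$ is a generalized polynomial; let $d$ be an upper bound for the degrees of all the $p_{j}$ and fix an integer $n$ with $n\ge 2$ and $n>d$. Since $m_{j}(x_{1}+\cdots+x_{n})=m_{j}(x_{1})\cdots m_{j}(x_{n})$, and, expanding each homogeneous component of $p_{j}$ by multiadditivity, $p_{j}(x_{1}+\cdots+x_{n})$ is a finite sum of terms of the form $A(x_{i_{1}},\ldots,x_{i_{k}})$ with $k\le d<n$, each such term depends only on the variables indexed by the nonempty set $E=\{i_{1},\ldots,i_{k}\}$, which is a \emph{proper} subset of $\{1,\ldots,n\}$ because $k<n$. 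Multiplying the two expansions and distributing the factors $m_{j}(x_{\ell})$ over $E$ and its complement exhibits $(p_{j}m_{j})(x_{1}+\cdots+x_{n})$ as a finite sum of products of a function of $x_{E}$ and a function of $x_{E^{c}}$; summing over $j$ preserves this form, so $(x_{1},\ldots,x_{n})\mapsto f(x_{1}+\cdots+x_{n})$ is decomposable, and it is continuous because exponentials and multiadditive functions are continuous.

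\smallskip

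\emph{Decomposable $\Rightarrow$ generalized exponential polynomial.} Here I would argue by induction on the least $n\ge 2$ for which $F(x_{1},\ldots,x_{n})=f(x_{1}+\cdots+x_{n})$ is decomposable. If $n=2$, decomposability means $f(x_{1}+x_{2})=\sum_{j=1}^{k}A_{j}(x_{1})B_{j}(x_{2})$ with finitely many terms, so every translate $\tau_{a}f=f(\cdot+a)$ lies in the linear span of $A_{1},\ldots,A_{k}$; hence the linear span of the translates of $f$ is finite dimensional and translation invariant and contains $f$, and by the classical characterization of continuous functions lying in a finite-dimensional translation-invariant space (see \cite{Sze91}) $f$ is an exponential polynomial, in particular a generalized one.

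\smallskip

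For $n\ge 3$ the idea is to descend one level: substituting the unit for $x_{n}$, and more generally comparing the decomposable representations of $x\mapsto F(x_{1},\ldots,x_{n-1},a)=f(x_{1}+\cdots+x_{n-1}+a)$ as $a$ varies, one splits the representation of $F$ according to whether the index $n$ lies in the block $E$, exploiting that $F$ is symmetric and depends only on the sum of its arguments. The main obstacle — and this is exactly the content of Laczkovich's theorem — is that the representation may carry terms attached to intermediate blocks $E$ with $1<|E|<n-1$, which do not restrict to lower-level decomposable pieces; controlling these requires combining the level-$n$ decomposability with iterated difference operators and a local spectral synthesis argument to show that, after subtracting a suitable generalized polynomial, $f$ again generates a finite-dimensional translation-invariant space, whence $f$ is a generalized exponential polynomial. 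I would carry out this last part along the lines of \cite{Lac19}, building on \cite{Shu10}, since the genuine difficulty lies in the combinatorics of the blocks $E$ together with the bookkeeping of the finitely many exponentials that end up in the spectrum of $f$.
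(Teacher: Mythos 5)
This theorem is not proved in the paper at all: it is imported verbatim from Laczkovich \cite{Lac19}, so the only thing the paper "does" here is cite it. Measured against that, your forward implication is fine and complete: choosing $n>\deg p_j$ so that every multiadditive term $A(x_{i_1},\ldots,x_{i_k})$ in the expansion of $p_j(x_1+\cdots+x_n)$ involves a nonempty \emph{proper} index set, and then splitting $\prod_\ell m_j(x_\ell)$ over that set and its complement, is exactly the standard argument (the closing remark about continuity of the pieces is unnecessary — decomposability makes no continuity demand on the factors — but harmless). The $n=2$ case of the converse, via the finite-dimensional translation-invariant span of translates and the classical characterization in \cite{Sze91}, is also correct as far as it goes, though note it yields an exponential polynomial in the non-generalized sense, which is of course sufficient.

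The genuine gap is the converse for $n\geq 3$, which is the actual content of the theorem, and your text does not prove it: it describes an intended induction and then explicitly defers "along the lines of \cite{Lac19}". Moreover, the descent step as sketched fails naively. If $F(x_1,\ldots,x_n)=\sum_E\sum_j A_j^E B_j^E$ and you substitute the unit for $x_n$, then any term whose block satisfies $E=\{1,\ldots,n-1\}$ (equivalently, whose complementary block is exactly $\{n\}$) collapses to a constant times a function of \emph{all} of the remaining $n-1$ variables, so the restricted representation is no longer of decomposable form; decomposability at level $n$ does not transparently imply it at level $n-1$, and the symmetry of $F$ does not by itself repair this. Controlling precisely these intermediate blocks — via difference operators, the structure results of Shulman \cite{Shu10}, and the spectral-synthesis-type analysis in \cite{Lac19} — is where the substance lies, and none of that is carried out. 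So as a standalone proof the attempt establishes only the easy direction plus the $n=2$ base case; for the theorem as stated you are, in effect, doing what the paper does, namely citing Laczkovich, and the honest course is to present it as a citation rather than as a proof.
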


Now we show that the functions $f_{1}, \ldots, f_{n}$ are decomposable functions. This together with equation \eqref{Eq_mixed} will yield that the functions $g_{1}, \ldots, g_{n}$ are decomposable functions, too. After this, we apply Theorem \ref{thm_Laczk}, that immediately yield that the solutions of equation \eqref{Eq_mixed} are generalized exponential polynomials of the multiplicative group $\mathbb{F}^{\times}$.

\begin{lem}\label{lem_decop_mixed}
 Let $n$ be a positive integer, $\mathbb{F}\subset \mathbb{C}$ be a field and
 $p_{1}, \ldots, p_{n}, q_{1}, \ldots, q_{n}$ be fixed positive integers fulfilling conditions C(i)--C(iii).
 Assume that the additive functions $f_{1}, \ldots, f_{n}, g_{1}, \ldots, g_{n}\colon \mathbb{F}\to \mathbb{C}$ satisfy functional equation
 \eqref{Eq_mixed}, that is,
 \[
 \sum_{i=1}^{n}f_{i}(x^{p_{i}})g_{i}(x)^{q_{i}}=0
 \]
for each $x\in \mathbb{F}$. Then all the functions $f_{1}, \ldots, f_{n}$ are decomposable functions of the group $\mathbb{F}^{\times}$.
\end{lem}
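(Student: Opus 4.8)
The plan is to exploit the Symmetrization Lemma (Lemma~\ref{lem_sym_mixed}) together with the fact that the exponents $p_1,\ldots,p_n$ are pairwise distinct, in order to isolate each term $f_i(x^{p_i})g_i(x)^{q_i}$ by a suitable difference operator and thereby show that the multi-additive function attached to $f_i$ becomes decomposable. First I would fix $i$ and apply difference operators to the symmetrized identity. Recall that after symmetrization we have, for all $x_1,\ldots,x_N\in\mathbb{F}$,
\[
\sum_{\sigma\in\mathscr{S}_N}\sum_{j=1}^{n} f_{j}(x_{\sigma(1)}\cdots x_{\sigma(p_{j})})\, g_{j}(x_{\sigma(p_j+1)})\cdots g_{j}(x_{\sigma(N)})=0 .
\]
Each summand, as a function of $(x_1,\ldots,x_N)$, is a product of a function depending on $p_j$ of the variables and $q_j$ functions each depending on a single variable; hence every summand is a decomposable function of $(x_1,\ldots,x_N)$ \emph{except possibly} those where the partition structure is trivial. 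The key observation is that the only obstruction to decomposability of the whole sum restricted to a cleverly chosen subset of variables comes from the term with the largest relevant block, and condition C(i) (strict monotonicity of the $p_j$) lets us separate these blocks.

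Concretely, the plan is: by condition C(i), $p_n$ is the strict maximum. Specialize the symmetrized equation by setting some of the variables equal (or applying $\Delta$-operators in the spirit of the Polarization formula, Theorem~\ref{Thm_polarization}) so as to kill every term with $p_j<p_n$ while leaving a nonzero multiple of $f_n$ evaluated on a product of $p_n$ variables, times a product involving $g_n$. More carefully, I would argue inductively downward on $i=n,n-1,\ldots,1$: assuming we have already extracted decomposability information for $f_{i+1},\ldots,f_n$, restrict attention to the variables $x_1,\ldots,x_{p_i}$ and specialize $x_{p_i+1},\ldots,x_N$ to generic fixed values (using that $g_j$ is additive, not zero, so generic values give nonzero products). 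The terms with $p_j>p_i$ cannot fit $f_j$'s argument into only $p_i$ free variables in a way that produces a genuinely new block, and — after subtracting off the already-understood contributions — one is left with an identity showing that
\[
\mathbb{F}^{p_i}\ni(x_1,\ldots,x_{p_i})\longmapsto f_i(x_1\cdots x_{p_i})
\]
agrees with a decomposable function. Since $p_i\ge 1$ and, when $p_i\ge 2$, the map $(x_1,\ldots,x_{p_i})\mapsto f_i(x_1\cdots x_{p_i})$ being decomposable is exactly the hypothesis of Theorem~\ref{thm_Laczk} applied to the group $\mathbb{F}^\times$, this gives what we want; the case $p_i=1$ (only possible for $i=1$) is immediate since then $f_1(x)$ itself is trivially a generalized exponential polynomial, but one still records that the associated $N$-variable symmetrized block is decomposable, which is all the lemma asserts.

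The main obstacle I anticipate is the bookkeeping needed to guarantee that, after specializing the "surplus" variables and subtracting the inductively handled terms, the coefficient multiplying $f_i(x_1\cdots x_{p_i})$ is genuinely nonzero and not itself absorbed into a decomposable remainder — this is where conditions C(ii) and C(iii) must be used (C(iii), $p_i\ne q_j$, prevents an $f_i$-block of size $p_i$ from being confused with a product of $p_i$ copies of some $g_j$ when $q_j=p_i$, and C(ii) keeps the total degree $N$ fixed so the combinatorics of block sizes is rigid). A clean way to handle this is to introduce an auxiliary variable and apply $\Delta_{y}$-type operators to detect the presence of a block of exactly $p_i$ variables, then invoke Lemma~\ref{lem_monom} to pass from the diagonal identity to the full multi-additive identity. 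I would also need Lemma~\ref{mainfact} to rule out the symmetric $k$-additive functions being identically zero for spurious reasons. Once decomposability of each $f_i$ (in the $N$-variable symmetrized sense, equivalently in the sense of Theorem~\ref{thm_Laczk}) is established, the statement follows.
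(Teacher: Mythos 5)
Your proposal is, in substance, the paper's own proof: symmetrize via Lemma~\ref{lem_sym_mixed}, keep $p_i$ free variables and set the surplus variables to constants at which $g_i$ does not vanish, observe that every resulting term except the ``full block'' one $f_i(x_1\cdots x_{p_i})\cdot\mathrm{const}$ is a product of factors depending on disjoint sets of variables, and descend from $p_n$ downward, using at each stage that the already-treated $f_j$ with $p_j>p_i$ are decomposable (generalized exponential polynomials), so the terms in which their block swallows all the free variables can be moved to the decomposable side. Three corrections to how you frame this. First, your claim that C(ii) and C(iii) ``must be used'' to protect the nonzero coefficient is not right: a product $g_j(x_{i_1})g_j(x_{i_2})\cdots$ of factors each depending on a single variable is decomposable regardless of whether some $q_j$ equals some $p_i$, and the paper explicitly remarks after the proof that only the distinctness of $p_1,\ldots,p_n$ is used; likewise the $\Delta$-operator, polarization, Lemma~\ref{mainfact} and Lemma~\ref{lem_monom} machinery you keep in reserve is unnecessary. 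Second, the sentence that terms with $p_j>p_i$ ``cannot fit $f_j$'s argument into only $p_i$ free variables in a way that produces a genuinely new block'' is misleading: they produce precisely the full block $f_j(c\,x_1\cdots x_{p_i})$, and it is your downward induction (those $f_j$ are already known to be generalized exponential polynomials, so such translates are decomposable) that disposes of them --- this is also how the paper's terse ``step-by-step descending argument'' has to be read, so you are aligned with it, but the quoted sentence as stated is false. Third, your treatment of the case $p_1=1$ is wrong: an arbitrary additive function is \emph{not} ``trivially a generalized exponential polynomial'' on $\mathbb{F}^{\times}$ (if it were, the lemma would be empty), and decomposability of the $N$-variable symmetrized block is not what the lemma asserts (every summand there is trivially decomposable); this edge case is also left implicit in the paper, so it does not distinguish your argument from the published one, but the justification you offer for it does not stand.
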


\begin{proof}
 Let $n$ be a positive integer, $\mathbb{F}\subset \mathbb{C}$ be a field and
 $p_{1}, \ldots, p_{n}, q_{1}, \ldots, q_{n}$ be fixed positive integers fulfilling conditions C(i)--C(iii).
 Assume that the additive functions $f_{1}, \ldots, f_{n}, g_{1}, \ldots, g_{n}\colon \mathbb{F}\to \mathbb{C}$ satisfy functional equation
 \eqref{Eq_mixed}
 for each $x\in \mathbb{F}$.
 Let
 \[
  S= \left\{p_{1}, \ldots, p_{n} \right\}
 \]
Then  due to condition C(i) $\max S= p_{n}$. In view of  Lemma \ref{lem_sym_mixed}, we have
 \[
  \frac{1}{N!} \sum_{\sigma \in \mathscr{S}_{N}}\sum_{i=1}^{n} f_{i}(x_{\sigma(1)} \cdots x_{\sigma(p_{i})}) \cdot g_{i}(x_{\sigma(p_{i}+1)}) \cdots g_{i}(x_{\sigma(N)})=0
 \]
for all $x_{1}, \ldots, x_{N}\in \mathbb{F}$, or after some rearrangement,
\begin{multline*}
   \frac{1}{N!} \sum_{\sigma \in \mathscr{S}_{N}} f_{n}(x_{\sigma(1)} \cdots x_{\sigma(p_{n})}) \cdot g_{n}(x_{\sigma(p_{n}+1)}) \cdots g_{n}(x_{\sigma(N)})
   \\=
   -\frac{1}{N!} \sum_{\sigma \in \mathscr{S}_{N}}\sum_{i=1}^{n-1} f_{i}(x_{\sigma(1)} \cdots x_{\sigma(p_{i})}) \cdot g_{i}(x_{\sigma(p_{i}+1)}) \cdots g_{i}(x_{\sigma(N)})
   \qquad
   \left(x_{1}, \ldots, x_{N}\in \mathbb{F}^{\times}\right).
   \end{multline*}
Let now
\[
 x_{p_{n}+1}= \cdots = x_{N}= 1,
\]
then the above identity says that $g_{n}(1)^{q_{1}} \cdot f_{n}$ is decomposable. If $g_{n}(1)$ were zero, but $g_{n}$ would not be identically zero, then the would exists
$a\in \mathbb{F}^{\times}$ such that
$g_{n}(a)\neq 0$. In this case the above substitutions should be modified to
\[
 x_{p_{n}+1}= a , \; x_{p_{n}+2}= \cdots = x_{N}= 1,
\]
to get the same conclusion.

After that, let us consider the set $S\setminus \left\{ p_{n}\right\}$ and apply the above argument for this set.
 With this step-by-step descending argument follows the statement of the lemma.
 \end{proof}

To verify that $g_{1}, \ldots, g_{n}$ are also decomposable functions we have to introduce the notions of  exponential polynomials.

%%%%%%%%%%%%%%%%%%%%%%%%
\medskip

%%%%%%%%%%%%%%%%%%%%%%%%

\begin{dfn}
{\it Polynomials} are elements of the algebra generated by additive
functions over $G$. Namely, if $n$ is a positive integer,
$P\colon\mathbb{C}^{n}\to \mathbb{C}$ is a (classical) complex
polynomial in
 $n$ variables and $a_{k}\colon G\to \mathbb{C}\; (k=1, \ldots, n)$ are additive functions, then the function
 \[
  x\longmapsto P(a_{1}(x), \ldots, a_{n}(x))
 \]
is a polynomial and, also conversely, every polynomial can be
represented in such a form.
%We call complex polynomials of this form \emph{normal polynomials}.
\end{dfn}

\begin{rem}
 We recall that the elements of $\mathbb{N}^{n}$ for any positive integer $n$ are called
 ($n$-dimensional) \emph{multi-indices}.
 Addition, multiplication and inequalities between multi-indices of the same dimension are defined component-wise.
 Further, we define $x^{\alpha}$ for any $n$-dimensional multi-index $\alpha$ and for any
 $x=(x_{1}, \ldots, x_{n})$ in $\mathbb{C}^{n}$ by
 \[
  x^{\alpha}=\prod_{i=1}^{n}x_{i}^{\alpha_{i}}
 \]
where we always adopt the convention $0^{0}=0$. We also use the
notation $\left|\alpha\right|= \alpha_{1}+\cdots+\alpha_{n}$. With
these notations any polynomial of degree at most $N$ on the
commutative semigroup $G$ has the form
\[
 p(x)= \sum_{\left|\alpha\right|\leq N}c_{\alpha}a(x)^{\alpha}
 \qquad
 \left(x\in G\right),
\]
where $c_{\alpha}\in \mathbb{C}$ and $a=(a_1, \dots, a_n) \colon
G\to \mathbb{C}^{n}$ is an additive function. Furthermore, the
\emph{homogeneous term of degree $k$} of $p$ is
\[
 \sum_{\left|\alpha\right|=k}c_{\alpha}a(x)^{\alpha} .
\]
\end{rem}

\begin{lem}[Lemma 2.7 of \cite{Sze91}]\label{L_lin_dep}
 Let $G$ be a commutative group,
 $n$ be a positive integer and let
 \[
  a=\left(a_{1}, \ldots, a_{n}\right),
 \]
where $a_{1}, \ldots, a_{n}$ are linearly independent complex valued
additive functions defined on $G$. Then the monomials
$\left\{a^{\alpha}\right\}$ for different multi-indices are linearly
independent.
\end{lem}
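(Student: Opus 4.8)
The plan is to argue by induction on the total number $n$ of additive functions, reducing to the one-variable case by a standard ``translation'' (i.e.\ difference-operator) trick that exploits additivity together with the homogeneity of the monomials $a^{\alpha}$. First I would set up the statement to be proved: suppose
\[
 \sum_{\alpha \in I} c_{\alpha}\, a(x)^{\alpha} = 0 \qquad (x \in G)
\]
for some finite index set $I \subset \mathbb{N}^{n}$ and complex coefficients $c_{\alpha}$; the goal is to show that every $c_{\alpha}$ vanishes. The base case $n = 1$ is the classical fact that a nonzero complex polynomial $P(t) = \sum_{k} c_{k} t^{k}$ cannot vanish identically on the image of a nonzero additive function $a_{1}\colon G \to \mathbb{C}$: since $a_{1}$ is additive and nonzero, $a_{1}(G)$ is a nontrivial subgroup of $(\mathbb{C},+)$, hence infinite, so $P$ would have infinitely many roots and must be the zero polynomial.

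For the inductive step, I would fix $y \in G$ and replace $x$ by $x + y$. By additivity, $a(x+y)^{\alpha} = \prod_{i=1}^{n} (a_{i}(x) + a_{i}(y))^{\alpha_{i}}$, and expanding each factor by the binomial theorem rewrites $\sum_{\alpha} c_{\alpha} a(x+y)^{\alpha}$ as a new polynomial expression in $a_{1}(x), \ldots, a_{n}(x)$ whose coefficients are themselves polynomials in $a_{1}(y), \ldots, a_{n}(y)$. Comparing, for a fixed value of $x$, the polynomial identity in the auxiliary variable $y$ (or equivalently applying the difference operator $\Delta_{y}$ and its iterates, exactly as in Theorem \ref{Thm_polarization}) lets me extract the top-degree part: if $N = \max_{\alpha \in I} |\alpha|$, the homogeneous component of degree $N$ on both sides must agree, and more usefully one can isolate, for each multi-index $\beta$ with $\beta \le \alpha$ for some $\alpha$, a relation among the $c_{\alpha}$. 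The cleanest route is: pick the variable $a_{n}$, write the relation as a polynomial in $a_{n}(x)$ with coefficients that are polynomials in $a_{1}(x), \ldots, a_{n-1}(x)$, namely $\sum_{k=0}^{d} P_{k}(a_{1}(x), \ldots, a_{n-1}(x)) \, a_{n}(x)^{k} = 0$; then fix $x$ and vary it along the subgroup where $a_{1}, \ldots, a_{n-1}$ are ``frozen'' — but since $a_{1}, \ldots, a_{n}$ are linearly independent, $\ker a_{1} \cap \cdots \cap \ker a_{n-1}$ is not contained in $\ker a_{n}$, so $a_{n}$ takes infinitely many values there while $a_{1}, \ldots, a_{n-1}$ are constant, forcing each $P_{k}(a_{1}(x), \ldots, a_{n-1}(x)) = 0$ on that coset. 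Sliding the coset (again using additivity) shows $P_{k} \equiv 0$ as a function of $a_{1}, \ldots, a_{n-1}$ on all of $G$, and the induction hypothesis applied to the $n-1$ linearly independent functions $a_{1}, \ldots, a_{n-1}$ kills all coefficients of each $P_{k}$, i.e.\ all $c_{\alpha}$.

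The main obstacle — and the point that needs care — is the linear-algebra input that makes the coset argument work: one must verify that linear independence of $a_{1}, \ldots, a_{n}$ over $\mathbb{C}$ genuinely guarantees that $a_{n}$ is non-constant on the common kernel $\bigcap_{i<n} \ker a_{i}$, equivalently that $a_{n}$ does not lie in the span of $a_{1}, \ldots, a_{n-1}$ when restricted there — this is where independence is used in an essential way, and it is exactly the hypothesis that fails if the $a_{i}$ were dependent. One should also be mildly careful about the convention $0^{0} = 0$ adopted in the paper, but since we only ever evaluate monomials at points where the relevant coordinates range over an infinite set, the finitely many exceptional points do not affect the conclusion. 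I expect the rest of the argument to be routine bookkeeping with multi-indices and the binomial expansion.
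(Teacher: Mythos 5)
Your reduction to the one-variable case rests on a linear-algebra claim that is false in this setting: you assert that linear independence of $a_{1}, \ldots, a_{n}$ over $\mathbb{C}$ guarantees that $a_{n}$ is non-constant on $H=\bigcap_{i<n}\ker a_{i}$. Additive functions $G\to\mathbb{C}$ are only $\mathbb{Q}$-homogeneous, not $\mathbb{C}$-linear, so the duality ``vanishing on the common kernel implies lying in the $\mathbb{C}$-span'' breaks down. Concretely, take $G=(\mathbb{C},+)$, $a_{1}=\mathrm{id}$ and $a_{2}=$ complex conjugation: these are linearly independent over $\mathbb{C}$ (no constant $c$ satisfies $\bar x = cx$ for all $x$), yet $\ker a_{1}=\{0\}\subseteq\ker a_{2}$, so $a_{2}$ is identically zero on the ``frozen'' subgroup and your coset argument cannot even start; note also that no choice of which variable to single out helps here, since both kernels are trivial. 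The lemma is nevertheless true for this pair (the monomials $x^{j}\bar{x}^{k}$ are linearly independent as functions on $\mathbb{C}$), so it is the method, not the statement, that fails. Your base case $n=1$ and the ``infinitely many values of $a_{n}$ on a coset forces each $P_{k}$ to vanish'' step are fine as far as they go, but they are downstream of the unjustified (indeed false) hypothesis $a_{n}(H)\neq\{0\}$.

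For comparison, the paper does not prove this lemma at all; it quotes it as Lemma 2.7 of Sz\'ekelyhidi's monograph \cite{Sze91}. The standard argument there avoids kernels entirely: from linear independence over $\mathbb{C}$ one first produces points $x_{1},\ldots,x_{n}\in G$ with $\det\bigl(a_{i}(x_{j})\bigr)\neq 0$ (by induction: expand the determinant in its last column as $\sum_{i}c_{i}a_{i}(x)$, where $c_{n}$ is a nonzero minor, and use independence). Then, for $x=k_{1}x_{1}+\cdots+k_{n}x_{n}$ with $k\in\mathbb{Z}^{n}$, the vector $\bigl(a_{1}(x),\ldots,a_{n}(x)\bigr)$ runs over the image of $\mathbb{Z}^{n}$ under an invertible matrix, which is Zariski dense in $\mathbb{C}^{n}$ (a polynomial vanishing on $\mathbb{Z}^{n}$ vanishes identically, by iterating the one-variable fact you already use). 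Hence $\sum_{\alpha}c_{\alpha}z^{\alpha}$ vanishes identically on $\mathbb{C}^{n}$ and all $c_{\alpha}=0$. If you want to salvage an induction of your flavour, you would have to replace the kernel-intersection step by some such ``simultaneous values'' argument; as written, the proposal has a genuine gap at exactly the point you flagged as the main obstacle.
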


\begin{dfn}
A function $m\colon G\to \mathbb{C}$ is called an \emph{exponential}
function if it satisfies
\[
 m(xy)=m(x)m(y)
 \qquad
 \left(x,y\in G\right).
\]
Furthermore, on a(n)  \emph{(generalized) exponential polynomial} we mean a linear
combination of functions of the form $p \cdot m$, where $p$ is a
(generalized) polynomial and $m$ is an exponential function.
\end{dfn}

%.
The following lemma shows that generalized exponential polynomial functions are linearly independent. Although it can be stated more generally (see \cite{Sze91}), we adopt it to our situation, when the functions are complex valued.
\begin{lem}[Lemma 4.3 of \cite{Sze91}]\label{L_Lin_Ind}
 Let $G$ be a commutative group, $n$ a positive integer, % $K$ a field, $X$ a $K$-linear space,
 $m_{1}, \ldots, m_{n} \colon G\to \mathbb{C}\, (i=1, \ldots, n)$ be distinct nonzero exponentials and  $p_{1}, \ldots, p_{n} \colon  G\to \mathbb{K}\, (i=1, \ldots, n)$ be generalized polynomials. If  $\displaystyle \sum_{i=1}^n p_i\cdot m_i$ is identically zero, then for all $i=1, \ldots, n$ the generalized polynomial  $p_i$ is identically zero.
\end{lem}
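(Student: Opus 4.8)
The plan is to induct on the number $n$ of exponentials, peeling one off at each step by means of (twisted) difference operators; throughout write the operation of $G$ multiplicatively and let $\Delta_y f(x)=f(xy)-f(x)$. Two preliminary observations are needed. \emph{First}, a nonzero exponential $m$ on a commutative group is nowhere zero: from $m(1)=m(1)^2$ we get $m(1)\in\{0,1\}$, and $m(1)=0$ would force $m\equiv 0$, so $m(1)=1$ and $m(x)m(x^{-1})=1$ for every $x$. In particular each $m_i$ is nowhere zero, so the quotients $e_i:=m_i/m_n$ are well-defined nonzero exponentials; here $e_n\equiv 1$, while $e_1,\dots,e_{n-1}$ are pairwise distinct and not identically $1$ because the $m_i$ are pairwise distinct. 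Dividing $\sum_{i=1}^n p_i m_i\equiv 0$ by the nowhere-zero function $m_n$, it is enough to prove: if $\sum_{i=1}^n p_i e_i\equiv 0$ with $e_1,\dots,e_{n-1}$ distinct nonzero exponentials that are not identically $1$ and $e_n\equiv 1$, then every $p_i$ vanishes. \emph{Second}, writing a generalized polynomial as $p=\sum_{k=0}^d A_k^{\ast}$ with $A_d\not\equiv 0$ (so $\deg p=d$; note $A_d^{\ast}\not\equiv 0$ by Lemma~\ref{mainfact}, since multiplication by $d!$ is injective on $\mathbb{C}$), the expansion
\[
A_k^{\ast}(xy)=\sum_{j=0}^k\binom{k}{j}A_k(\underbrace{x,\dots,x}_{k-j},\underbrace{y,\dots,y}_{j})
\]
shows that, as a function of $x$, $p(xy)$ is again a generalized polynomial of degree $\le d$ whose degree-$d$ homogeneous part is exactly $A_d^{\ast}$.

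From the second observation we read off two facts. (a) For any $y$, $\Delta_y p$ has degree $\le d-1$, hence $\Delta_{y_1}\cdots\Delta_{y_k}p\equiv 0$ whenever $k>d$. (b) For an exponential $e$ define $(L_y^e p)(x):=e(y)\,p(xy)-p(x)$; then $\Delta_y(p\cdot e)=e\cdot(L_y^e p)$, hence $\Delta_{y_1}\cdots\Delta_{y_k}(p\cdot e)=e\cdot\bigl(L_{y_1}^e\cdots L_{y_k}^e p\bigr)$, and the degree-$d$ part of $L_y^e p$ is $(e(y)-1)A_d^{\ast}$. In particular, if $e(y)\neq 1$ then $L_y^e p$ again has degree $d$ and is $\not\equiv 0$ whenever $p\not\equiv 0$.

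Now the induction. For $n=1$ the hypothesis is $p_1 m_1\equiv 0$ with $m_1$ nowhere zero, so $p_1\equiv 0$. For the step, assume the claim for $n-1$ and consider $\sum_{i=1}^n p_i e_i\equiv 0$ normalized as above. Choose $k>\deg p_n$ and apply $\Delta_{y_1}\cdots\Delta_{y_k}$ to the relation: by (a) the term $i=n$ equals $\Delta_{y_1}\cdots\Delta_{y_k}p_n$ (since $e_n\equiv 1$) and vanishes identically, while (b) turns the remaining terms into
\[
\sum_{i=1}^{n-1}e_i\cdot\bigl(L_{y_1}^{e_i}\cdots L_{y_k}^{e_i}p_i\bigr)\equiv 0\qquad(y_1,\dots,y_k\in G).
\]
Since $e_1,\dots,e_{n-1}$ are distinct nonzero exponentials, the induction hypothesis gives $L_{y_1}^{e_i}\cdots L_{y_k}^{e_i}p_i\equiv 0$ for each $i\le n-1$ and all $y_1,\dots,y_k$. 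Fix $i\le n-1$; as $e_i$ is nowhere zero and $e_i\not\equiv 1$, pick $y$ with $e_i(y)\neq 1$ and set $y_1=\dots=y_k=y$: by the last assertion in (b), $p_i\not\equiv 0$ would make $L_y^{e_i}\cdots L_y^{e_i}p_i\not\equiv 0$, a contradiction. Hence $p_i\equiv 0$ for all $i\le n-1$, and then the relation forces $p_n e_n=p_n\equiv 0$ as well.

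The main obstacle is the bookkeeping in (b): checking that the twisted operators $L_y^{e}$ compose correctly with $\Delta_y$ and that they preserve degree (and non-vanishing) precisely when $e(y)\neq 1$. Normalizing by $m_n$ at the outset is what makes the inductive hypothesis directly applicable to a genuine relation among $n-1$ distinct nonzero exponentials; everything else is the routine multinomial expansion recorded above.
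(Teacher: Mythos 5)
Your proof is correct. Note that the paper itself gives no argument for this statement: it is quoted as Lemma 4.3 of the monograph \cite{Sze91}, so there is no internal proof to compare with; your induction on the number of exponentials, normalizing by the nowhere-vanishing $m_n$ and then annihilating the polynomial attached to the trivial exponential with iterated difference operators while tracking the others through the twisted operators $L_y^{e}$, is essentially the classical argument for this linear-independence result. The only step you leave implicit is that a generalized polynomial whose top homogeneous component is nonzero cannot vanish identically (uniqueness of the homogeneous decomposition); this follows at once from the Polarization formula (Theorem \ref{Thm_polarization}) together with Lemma \ref{mainfact}, both already available in the paper, so the gap is cosmetic rather than substantive.
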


However we will need the analogue statement for polynomial expressions of generalized exponential polynomials which was proved in \cite{GseKisVin18}.

\begin{thm}\label{T_Poly_Ind}
Let $\mathbb{K}$ be a field of characteristic $0$ and $k,l,N$ be
positive integers such that $k,l\le N$. Let $m_1, \dots, m_k\colon
\mathbb{K}^{\times}\to \mathbb{C}$ be distinct exponential functions
that are additive on $\mathbb{K}$, let $a_1, \dots, a_l\colon
\mathbb{K}^{\times}\to \mathbb{C}$ be additive functions that are
linearly independent over $\mathbb{C}$ and  for all $|s|\leq N$ let $ P_s\colon
\mathbb{C}^l\to \mathbb{C}$ be classical complex polynomials of $l$
variables. If
\[
    \sum_{|s|\le N } P_{s}(a_1, \dots, a_l) m_1^{s_1}\cdots m_k^{s_k}=0
\]
then for all $|s|\leq N$, the polynomials $P_s$ vanish identically.
\end{thm}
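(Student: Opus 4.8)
The plan is to reduce the multivariate polynomial identity to the single-exponential situation covered by Lemma \ref{L_Lin_Ind}, by separating the contributions according to the exponential part. First I would observe that each product $m_1^{s_1}\cdots m_k^{s_k}$ is itself an exponential function on $\mathbb{K}^{\times}$, and that $P_s(a_1,\dots,a_l)$ is a generalized polynomial on $\mathbb{K}^{\times}$ (a polynomial in the additive functions $a_1,\dots,a_l$, hence a sum of diagonalizations of symmetric multiadditive maps). So the left-hand side is a sum of terms $p_s\cdot e_s$ with $p_s$ a generalized polynomial and $e_s=m_1^{s_1}\cdots m_k^{s_k}$ an exponential. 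The issue is that distinct multi-indices $s$ need \emph{not} give distinct exponentials $e_s$, since the $m_i$ are only assumed distinct, not multiplicatively independent.

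To handle this, I would group the multi-indices: let $E$ range over the distinct exponential functions occurring among $\{m_1^{s_1}\cdots m_k^{s_k} : |s|\le N\}$, and for each such $E$ set $S_E=\{s : |s|\le N,\ m_1^{s_1}\cdots m_k^{s_k}=E\}$. Rewriting the hypothesis as $\sum_{E}\bigl(\sum_{s\in S_E} P_s(a_1,\dots,a_l)\bigr)E=0$, the functions $E$ are now \emph{distinct} nonzero exponentials, so Lemma \ref{L_Lin_Ind} applies and yields $\sum_{s\in S_E} P_s(a_1,\dots,a_l)=0$ for each $E$. Since $a_1,\dots,a_l$ are linearly independent over $\mathbb{C}$, Lemma \ref{L_lin_dep} shows the monomials $a^{\alpha}$ are linearly independent, hence the classical polynomial $\sum_{s\in S_E}P_s$ (a genuine polynomial in $l$ commuting variables) has all coefficients zero, i.e.\ $\sum_{s\in S_E}P_s\equiv 0$ as a polynomial in $\mathbb{C}[t_1,\dots,t_l]$.

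It remains to promote the vanishing of each block sum $\sum_{s\in S_E}P_s$ to the vanishing of the individual $P_s$. Here is where the multiplicative structure of the $m_i$ re-enters: one shows that whenever $s\ne s'$ with $|s|,|s'|\le N$ but $m_1^{s_1}\cdots m_k^{s_k}=m_1^{s'_1}\cdots m_k^{s'_k}$, there is a nontrivial multiplicative relation among the $m_i$. The key point is that the $m_i$ are additive on $\mathbb{K}$ as well as multiplicative on $\mathbb{K}^{\times}$; an additive–and–multiplicative map $\mathbb{K}\to\mathbb{C}$ is a field homomorphism (injective, since its kernel is an ideal and $1\mapsto 1$), so in fact distinct $m_i$ are distinct field homomorphisms, and by a theorem of Artin–type independence of characters (or directly: homomorphisms $\mathbb{K}^{\times}\to\mathbb{C}^{\times}$ that extend to field embeddings are multiplicatively independent up to roots of unity, and over characteristic $0$ one rules out the torsion by evaluating on suitable elements) there is no relation $m_1^{s_1-s'_1}\cdots m_k^{s_k-s'_k}\equiv 1$ with exponents bounded by $N$ in absolute value unless $s=s'$. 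Thus each $S_E$ is a singleton, and the block-sum conclusion becomes exactly $P_s\equiv 0$ for every $s$.

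The main obstacle is the last paragraph: the passage from ``block sums of the $P_s$ vanish'' to ``each $P_s$ vanishes'', i.e.\ proving that distinct multi-indices of bounded size yield distinct exponentials. This is precisely the place where the hypothesis that the $m_i$ are additive on $\mathbb{K}$ (not merely exponential on $\mathbb{K}^{\times}$) is essential, and I expect the cited proof in \cite{GseKisVin18} to carry this out by showing such $m_i$ are field homomorphisms and then invoking an independence-of-homomorphisms argument. Everything else — rewriting as a sum over distinct exponentials, applying Lemma \ref{L_Lin_Ind}, and applying Lemma \ref{L_lin_dep} — is routine.
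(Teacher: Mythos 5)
First, a caveat: the paper does not actually prove Theorem \ref{T_Poly_Ind} --- it is imported from \cite{GseKisVin18} and used as a black box --- so there is no in-paper proof to compare yours against. Judged on its own, your reduction is the right skeleton: grouping multi-indices by the exponential $E=m_1^{s_1}\cdots m_k^{s_k}$, applying Lemma \ref{L_Lin_Ind} to the distinct exponentials $E$, and then Lemma \ref{L_lin_dep} to conclude that each block sum $\sum_{s\in S_E}P_s$ vanishes identically as a polynomial, is correct, and you rightly isolate the whole content of the theorem in the claim that each class $S_E$ is a singleton, i.e.\ that distinct multi-indices with $|s|\le N$ give distinct exponentials, this being exactly where additivity of the $m_i$ on $\mathbb{K}$ must enter.

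The gap is in your justification of that key claim. Artin--Dedekind independence of characters gives \emph{linear} independence, not multiplicative independence, and the assertion that field embeddings are ``multiplicatively independent up to roots of unity'', with torsion ``ruled out by evaluating on suitable elements'', is neither a citable theorem nor an argument (it is not clear which elements would work for a general $\mathbb{K}$). The claim you need is true and can be closed with tools already in the paper: suppose $m_1^{s_1}\cdots m_k^{s_k}=m_1^{t_1}\cdots m_k^{t_k}$ on $\mathbb{K}^{\times}$ with $s\neq t$. Since each $m_i$ is nonvanishing on $\mathbb{K}^{\times}$ (as $m_i(x)m_i(x^{-1})=1$), you may cancel common powers and assume $s$ and $t$ have disjoint supports. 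Each side is then the diagonalization of a symmetric multiadditive function of degree $|s|$, resp.\ $|t|$ (this is where additivity on $\mathbb{K}$ is used); evaluating at $x=2$, where every $m_i$ restricts to the identity on $\mathbb{Q}$, gives $|s|=|t|=:M$. By the uniqueness part of polarization (Lemma \ref{mainfact}) the two symmetrized $M$-additive functions coincide; but linear independence of the distinct exponentials $m_1,\dots,m_k$ (Lemma \ref{L_Lin_Ind} with constant coefficients) makes the functions $(x_1,\dots,x_M)\mapsto m_{i_1}(x_1)\cdots m_{i_M}(x_M)$ for distinct index tuples linearly independent, so two symmetrizations built from disjoint, nonempty multisets of indices cannot be equal --- a contradiction, whence $s=t$. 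With that argument substituted for your third paragraph the proposal becomes a complete proof; as submitted, the decisive step is only asserted and deferred to \cite{GseKisVin18}.
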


%\begin{dfn}
% Let $G$ be a commutative group and $V\subseteq \mathbb{C}^G$ a set of functions. We say that $V$ is {\it translation invariant} if for every $f\in V$ the function $\tau_{g}f\in V$ also holds for all $g\in G$, where
 %\[
%  \tau_{g}f(h)= f(hg)
%  \qquad
%  \left(h\in G\right).
% \]
% \end{dfn}

 %In view of Theorem 10.1 of Székelyhidi \cite{Sze91}, any finite dimensional translation invariant linear
 %space of complex valued functions on a commutative group consists of exponential polynomials.
 %This implies that if $G$ is a commutative group, then any function
 %$f\colon G\to \mathbb{C}$, satisfying the functional equation
 %\[
 % f(xy)= \sum_{i=1}^{n}g_{i}(x)h_{i}(y)
 % \qquad
 % \left(x, y\in G\right)
 %\]
%for some positive integer $n$ and functions $g_{i}, h_{i}\colon G\to \mathbb{C}$ ($i=1, \ldots, %n$),
%is an exponential polynomial of degree at most $n$.

%This enlightens the connection between generalized polynomials and polynomials.
It is easy to see that
each polynomial, that is, any function of the form
\[
  x\longmapsto P(a_{1}(x), \ldots, a_{n}(x)),
 \]
where $n$ is a positive integer,
$P\colon\mathbb{C}^{n}\to \mathbb{C}$ is a (classical) complex
polynomial in
$n$ variables and $a_{k}\colon G\to \mathbb{C}\; (k=1, \ldots, n)$ are additive functions, is a generalized polynomial.
The converse however is in general not true. A complex-valued generalized polynomial $p$ defined on a commutative group $G$ is a
polynomial \emph{if and only if} its variety (the linear space spanned by its translates) is of \emph{finite} dimension.

%Henceforth, not only the notion of (exponential) polynomials, but also that of \emph{decomposable functions} will be used. The basics of this concept are due to
%Shulman \cite{Shu10},

The notion of derivations can be extended in several ways. We will employ the concept of higher order derivations according to Reich \cite{Rei98} and Unger--Reich \cite{UngRei98}. For further results on characterization theorems on higher order derivations consult e.g. \cite{Eba15, Eba17, EbaRieSah} and
\cite{GseKisVin18}.

\begin{dfn}
 Let $\mathbb{F}\subset \mathbb{C}$ be a field. The identically zero map is the only \emph{derivation of order zero}. For each $n\in \mathbb{N}$, an additive mapping
 $f\colon \mathbb{F}\to \mathbb{C}$ is termed to be a \emph{derivation of order $n$}, if there exists $B\colon \mathbb{F}\times \mathbb{F}\to \mathbb{C}$ such that
 $B$ is a bi-derivation of order $n-1$ (that is, $B$ is a derivation of order $n-1$ in each variable) and
 \[
  f(xy)-xf(y)-f(x)y=B(x, y)
  \qquad
  \left(x, y\in \mathbb{F}\right).
 \]
 The set of derivations of order $n$ of the ring $R$ will be denoted by $\mathscr{D}_{n}(\mathbb{F})$.
\end{dfn}

\begin{rem}
\label{pathologic}
Since $\mathscr{D}_{0}(\mathbb{F})=\left\{0\right\}$, the only bi-derivation of order zero is the identically zero function, thus $f\in \mathscr{D}_{1}(\mathbb{F})$ if and only if
  \[
   f(xy)=xf(y)+f(x)y
   \qquad
   \left(x, y\in \mathbb{F}\right),
  \]
that is, the notions of first order derivations and derivations coincide. On the other hand for any $n\in \mathbb{N}$ the set $\mathscr{D}_{n}(\mathbb{F})\setminus \mathscr{D}_{n-1}(\mathbb{F})$ is nonempty because  $d_{1}\circ \cdots \circ d_{n}\in \mathscr{D}_{n}(\mathbb{F})$, but $d_{1}\circ \cdots \circ d_{n}\notin \mathscr{D}_{n-1}(R)$, where $d_{1}, \ldots, d_{n}\in \mathscr{D}_{1}(\mathbb{F})$ are non-identically zero derivations.
\end{rem}

For our future purposes the notion of differential operators will also be important, see \cite{KisLac18}.

\begin{dfn}
 Let $\mathbb{F}\subset \mathbb{C}$ be a field. We say that the map
 $D \colon \mathbb{F}\to \mathbb{C}$ is a \emph{differential operator of degree at most $n$} if $D$ is the linear combination, with coefficients from $\mathbb{F}$, of finitely many maps of the form
 $d_1 \circ \cdots \circ d_k$, where $d_1, \ldots, d_k$ are derivations on $\mathbb{F}$ and $k\leq n$. If $k = 0$ then we interpret $d_1\circ \cdots \circ d_k$ as the identity function.
 We denote by $\mathscr{O}_n(\mathbb{F})$ the set of differential operators of degree at most $n$ defined on $\mathbb{F}$. We say that the degree of a differential operator $D$ is
$n$ if $D \in \mathscr{O}_{n}(\mathbb{F})\setminus\mathscr{O}_{n-1}(\mathbb{F})$ (where $\mathscr{O}_{-1}(\mathbb{F})= \emptyset$, by definition).
\end{dfn}

%\begin{rem}
%The term \emph{differential operator} is justified by the following fact. Let %$\mathbb{K} =\mathbb{Q}(t_1, \ldots, t_k)$, where $t_1, \ldots, t_k$ are algebraically %independent over $\mathbb{Q}$. Then $\mathbb{K}$ is the field of all rational functions %of $t_1, \ldots, t_k$ with rational coefficients. It is clear that
%\[
% d_i = \frac{\partial}{ \partial t_{i}}
% \]
%is a derivation on $\mathbb{K}$ for every $i = 1, \ldots, k$. Therefore, every
%differential operator
%\[
% D= \sum_{i_{1}+\cdots+i_{k}\leq n}c_{i_{1}, \ldots, i_{k}}\cdot %\frac{\partial^{i_{1}+\cdots+i_{k}}}{\partial t_{1}^{i_{1}}\cdots \partial %t_{k}^{i_{k}}},
%\]
%where the coefficients $c_{i_{1}, \ldots, i_{k}}$ belong to $\mathbb{K}$, is a %differential operator of degree at most $n$, and also conversely, if $D$ is a %differential operator of degree at most $n$ on the field $\mathbb{K} = \mathbb{Q}(t_1, %\ldots, t_k)$, then $D$ is of the above form.
%\end{rem}

The main result of \cite{KisLac18} is Theorem 1.1 that reads in our settings as follows.

\begin{thm}\label{thm_KisLac}
 Let $\mathbb{F}\subset \mathbb{C}$ be a field and let $n$ be a positive integer. Then, for every function $D \colon \mathbb{F}\to \mathbb{C}$, the
following are equivalent.
\begin{enumerate}[(i)]
\item $D\in \mathscr{D}_{n}(\mathbb{F})$
\item $D\in \mathrm{cl}\left(\mathscr{O}_{n}(\mathbb{F})\right)$
\item $D$ is additive on $\mathbb{F}$, $D(1) = 0$, and $D/j$, as a map from the group $\mathbb{F}^{\times}$ to $\mathbb{C}$, is a generalized polynomial of degree at most $n$. Here $j$ stands for the identity map defined on $\mathbb{F}$.
\end{enumerate}
\end{thm}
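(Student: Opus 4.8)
The plan is to prove the cycle $(iii)\Rightarrow(i)\Rightarrow(ii)\Rightarrow(iii)$: the equivalence of $(i)$ and $(iii)$ is a two–directional induction on $n$, while the statement $(ii)$ enters through a soft closedness argument in one direction and an approximation by differential operators in the other. Throughout, for a map $\delta$ defined on the \emph{multiplicative} group $\mathbb{F}^{\times}$, I write $\Delta$ for the difference operator attached to that group operation, so $\Delta_{y}\delta(x)=\delta(xy)-\delta(x)$, and I use the classical fact (forward direction: Theorem \ref{Thm_polarization}) that $\delta$ is a generalized polynomial of degree at most $k$ on $\mathbb{F}^{\times}$ \emph{iff} $\Delta_{y_{1}}\cdots\Delta_{y_{k+1}}\delta\equiv 0$.

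For $(i)\Leftrightarrow(iii)$ I induct on $n$; the case $n=0$ is trivial, since both sides force $D\equiv 0$. Let $D\colon\mathbb{F}\to\mathbb{C}$ be additive and put $B(x,y)=D(xy)-xD(y)-D(x)y$; then $B$ is automatically biadditive and symmetric, $B(1,y)=-D(1)y$, and, writing $\delta=D/j$ on $\mathbb{F}^{\times}$ and $b(x,y)=B(x,y)/(xy)$, the defining identity becomes $\Delta_{y}\delta(x)=\delta(y)+b(x,y)$. If $D\in\mathscr{D}_{n}(\mathbb{F})$, then $D(1)=0$ (bi-derivations of positive order vanish at $1$ in each variable, by the inductive content of the definition), and $B$ is a bi-derivation of order $n-1$; by the induction hypothesis applied in each variable, $b(\cdot,y)$ and $b(x,\cdot)$ are generalized polynomials of degree $\le n-1$, hence $\Delta_{y}\delta$ is, in the remaining variable, a generalized polynomial of degree $\le n-1$, so $\Delta_{y_{1}}\cdots\Delta_{y_{n+1}}\delta\equiv 0$ and $\delta$ is a generalized polynomial of degree $\le n$. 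Conversely, if $D$ is additive, $D(1)=0$, and $\delta=D/j$ is a generalized polynomial of degree $\le n$, then $B(1,y)=0$ and $b(x,y)=\Delta_{y}\delta(x)-\delta(y)$ is, in each variable separately, a generalized polynomial of degree $\le n-1$; thus both $x\mapsto B(x,y)$ and $y\mapsto B(x,y)$ are additive, vanish at $1$, and have $B(\cdot,y)/j$, $B(x,\cdot)/j$ generalized polynomials of degree $\le n-1$, so the induction hypothesis makes $B$ a bi-derivation of order $n-1$, i.e. $D\in\mathscr{D}_{n}(\mathbb{F})$.

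For $(ii)\Rightarrow(iii)$ I observe that the set $\mathscr{S}$ of functions satisfying $(iii)$ is \emph{closed} in the topology of pointwise convergence on $\mathbb{C}^{\mathbb{F}}$: additivity, $D(1)=0$ and $\Delta_{y_{1}}\cdots\Delta_{y_{n+1}}(D/j)\equiv 0$ are all intersections of closed conditions. Each iterated composition $d_{1}\circ\cdots\circ d_{k}$ with $k\le n$ lies in $\mathscr{D}_{k}(\mathbb{F})\subseteq\mathscr{D}_{n}(\mathbb{F})$ (Remark \ref{pathologic}) and $\mathscr{D}_{n}(\mathbb{F})$ is a $\mathbb{C}$-vector space, so the operators in $\mathscr{O}_{n}(\mathbb{F})$ that vanish at $1$ lie in $\mathscr{D}_{n}(\mathbb{F})=\mathscr{S}$ (using $(i)\Leftrightarrow(iii)$); since $\mathscr{S}$ is closed it contains their closure. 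The remaining, substantive direction is $(i)\Rightarrow(ii)$: every $D\in\mathscr{D}_{n}(\mathbb{F})$ is a pointwise limit of differential operators of degree $\le n$. Given finitely many points of $\mathbb{F}$, choose a finitely generated subfield $\mathbb{F}_{0}\subseteq\mathbb{F}$ containing them; on a finitely generated extension of $\mathbb{Q}$ the module of derivations is free of finite rank (characteristic $0$, hence $\mathbb{F}_{0}$ is separably generated over $\mathbb{Q}$), and an induction on $n$ based on the recursive definition of $\mathscr{D}_{n}$ expresses $D|_{\mathbb{F}_{0}}$ as an $\mathbb{F}_{0}$-linear combination of at most $n$-fold compositions of a \emph{fixed finite} family $d^{(1)},\dots,d^{(r)}$ of derivations of $\mathbb{F}_{0}$. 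Extending each $d^{(j)}$ to a derivation of $\mathbb{F}$ (derivations extend along arbitrary field extensions) produces $\widetilde D\in\mathscr{O}_{n}(\mathbb{F})$ agreeing with $D$ on $\mathbb{F}_{0}$, hence on the chosen points, so $D\in\mathrm{cl}(\mathscr{O}_{n}(\mathbb{F}))$.

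I expect the real obstacle to be exactly this last step: converting the recursive ``bi-derivation of order $n-1$'' description of $\mathscr{D}_{n}$ on a finitely generated field into an explicit representation as an $\mathbb{F}_{0}$-combination of iterated compositions of a fixed finite set of derivations, while controlling the number of factors so that it stays $\le n$; by comparison the two inductions for $(i)\Leftrightarrow(iii)$ and the closedness remark are routine. One small point must be handled cleanly: the identity map lies in $\mathscr{O}_{0}(\mathbb{F})\subseteq\mathscr{O}_{n}(\mathbb{F})$ but violates $D(1)=0$, so $(ii)$ should be read together with the normalization $D(1)=0$ already present in $(iii)$; equivalently, the content is $\mathscr{D}_{n}(\mathbb{F})=\mathrm{cl}\bigl(\mathscr{O}_{n}(\mathbb{F})\bigr)\cap\{D: D(1)=0\}$, the intersection removing precisely the spurious multiples of the identity that the closure of the $\mathbb{F}$-span would otherwise contribute.
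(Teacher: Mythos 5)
First, a point of comparison: the paper does not prove this statement at all — it is imported verbatim as Theorem~1.1 of \cite{KisLac18} — so there is no internal proof to measure your attempt against; what you have written is an attempted proof of the cited result. Your architecture is sound and close in spirit to the source. The induction giving (i)$\Leftrightarrow$(iii) via the identity $\Delta_{y}\delta(x)=\delta(y)+b(x,y)$ with $\delta=D/j$ and $b=B/(xy)$ is correct, granting the classical difference-operator characterization of generalized polynomials (only the forward direction of which is in this paper as Theorem~\ref{Thm_polarization}; the converse, Djokovi\'c-type, direction is an external standard fact you rightly flag). Your observation that (ii) must be read together with the normalization $D(1)=0$ — since $j\in\mathscr{O}_{0}(\mathbb{F})\subseteq\mathscr{O}_{n}(\mathbb{F})$ while every element of $\mathscr{D}_{n}(\mathbb{F})$ vanishes at $1$ — is correct and worth making. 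One small repair in (ii)$\Rightarrow$(iii): as written you only obtain $\mathrm{cl}\bigl(\mathscr{O}_{n}(\mathbb{F})\cap\{D(1)=0\}\bigr)\subseteq\mathscr{S}$; to get $\mathrm{cl}\bigl(\mathscr{O}_{n}(\mathbb{F})\bigr)\cap\{D(1)=0\}\subseteq\mathscr{S}$ you should subtract the identity component along the approximating net (if $L_{\alpha}=c_{\alpha}j+M_{\alpha}\to D$ pointwise and $D(1)=0$, then $c_{\alpha}=L_{\alpha}(1)\to 0$, hence $M_{\alpha}\to D$ with $M_{\alpha}\in\mathscr{D}_{n}(\mathbb{F})$). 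Easy, but it is a needed step.

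The genuine gap is in (i)$\Rightarrow$(ii). The claim that on a finitely generated subfield $\mathbb{F}_{0}$ every order-$n$ derivation is an actual linear combination of at most $n$-fold compositions of finitely many derivations is precisely the substantive content of the Kiss--Laczkovich theorem, and you assert it rather than prove it: ``an induction on $n$ based on the recursive definition'' conceals all the work. Carrying it out requires (a) making sense of the compositions (derivations $\mathbb{F}_{0}\to\mathbb{C}$ must be extended before they can be composed); (b) writing the symmetric bi-derivation $B$ of order $n-1$ as $\sum_{\alpha}c_{\alpha}(y)\,d^{\alpha}(x)$, which needs the induction hypothesis as a spanning statement \emph{and} the linear (indeed algebraic) independence of the basic compositions $d^{\alpha}$ — the content of results of the type of Lemma~\ref{L:dermom} and Theorem~\ref{T_Poly_Ind} — so that the coefficient functions $c_{\alpha}$ are well defined and are themselves order-$(n-1)$ objects in $y$; and (c) constructing $T\in\mathscr{O}_{n}(\mathbb{F}_{0})$ whose associated form reproduces $B$, which for $n\geq 3$ is a genuine combinatorial (Bell-polynomial/Leibniz) matching, after which $D-T$ is an ordinary derivation. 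You yourself name this step as ``the real obstacle,'' and that is exactly right: your proposal completes the comparatively routine parts of the equivalence ((i)$\Leftrightarrow$(iii), the closedness inclusion, the $D(1)=0$ caveat) but leaves the core of (i)$\Rightarrow$(ii) unproved — which is the part for which the paper defers to \cite{KisLac18}.
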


%%%%%%%%%%%%%%%%%%%%%%%%%
\medskip

%%%%%%%%%%%%%%%%%%%%%%%%%

Note that according to Lemma \ref{lem_decop_mixed}, if the additive functions
$f_{1}, \ldots, f_{n}, g_{1}, \ldots, g_{n}$ solve equation \eqref{Eq_mixed}, then
the functions $f_{1}, \ldots, f_{n}$ are decomposable functions on the multiplicative group $\mathbb{F}^{\times}$, but this lemma tells nothing about the functions $g_{1}, \ldots, g_{n}$. Now we show that the functions $g_{1}, \ldots, g_{n}$ are also decomposable functions on the multiplicative group $\mathbb{F}^{\times}$.

 \begin{thm}\label{thm_decop_mixed2}
 Let $n$ be a positive integer, $\mathbb{F}\subset \mathbb{C}$ be a field and
 $p_{1}, \ldots, p_{n}, q_{1}, \ldots, q_{n}$ be fixed positive integers fulfilling conditions C(i)--C(iii).
 Assume that the additive functions $f_{1}, \ldots, f_{n}, g_{1}, \ldots, g_{n}\colon \mathbb{F}\to \mathbb{C}$ satisfy functional equation
 \eqref{Eq_mixed}, that is,
 \[
 \sum_{i=1}^{n}f_{i}(x^{p_{i}})g_{i}(x)^{q_{i}}=0
 \]
for each $x\in \mathbb{F}$. Then all the functions $f_{1}, \ldots, f_{n}$ and $g_{1}, \ldots, g_{n}$ are decomposable functions of the group $\mathbb{F}^{\times}$,i.e., all are generalized exponential polynomials.
\end{thm}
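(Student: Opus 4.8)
The part of the statement concerning $f_{1}, \ldots, f_{n}$ is already available: by Lemma~\ref{lem_decop_mixed} these functions are decomposable on $\mathbb{F}^{\times}$, so Theorem~\ref{thm_Laczk} shows that each $f_{i}$ is a generalized exponential polynomial on $\mathbb{F}^{\times}$; and since $x \mapsto x^{p_{i}}$ is an endomorphism of the multiplicative group, the coefficient functions $x \mapsto f_{i}(x^{p_{i}})$ are generalized exponential polynomials on $\mathbb{F}^{\times}$ as well. Thus the real content of the theorem is the decomposability of $g_{1}, \ldots, g_{n}$, and by Theorem~\ref{thm_Laczk} it suffices to prove that each $g_{i}$ is a generalized exponential polynomial on $\mathbb{F}^{\times}$.

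The plan is to feed the now-known form of the $f_{i}$'s back into \eqref{Eq_mixed} and then to strip off the $n$ summands one at a time, descending on the exponents $p_{n} > p_{n-1} > \cdots > p_{1}$ in the spirit of the proof of Lemma~\ref{lem_decop_mixed}. Writing each $f_{i}$ as a linear combination of functions $p \cdot m$ with $p$ a generalized polynomial and $m$ an exponential on $\mathbb{F}^{\times}$, and composing with $x \mapsto x^{p_{i}}$, one obtains
\[
 f_{i}(x^{p_{i}}) = \sum_{l=1}^{r} m_{l}(x)^{p_{i}}\, R_{i, l}(x) \qquad \left( x \in \mathbb{F}^{\times} \right),
\]
with $m_{1}, \ldots, m_{r}$ distinct nonzero exponentials on $\mathbb{F}^{\times}$ (a common list for all $i$) and $R_{i, l}$ generalized polynomials; since the exponentials occurring here are restrictions of field homomorphisms, hence injective, conditions C(i)--C(iii) guarantee that, for each $l$, the exponentials $m_{l}^{p_{1}}, \ldots, m_{l}^{p_{n}}$ carried by the successive summands are genuinely distinct. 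Substituting this into \eqref{Eq_mixed}, and if helpful passing first to an auxiliary multi-variable identity as in Lemma~\ref{lem_sym_mixed} or replacing $x$ by $xy$ so that each $f_{i}((xy)^{p_{i}})$ becomes a decomposable, hence finite-rank, function of $(x, y)$, one reads the resulting identity as a relation carried by distinct exponentials and invokes the linear independence of generalized exponential polynomials and of their polynomial combinations --- Lemma~\ref{L_Lin_Ind} together with its polynomial refinement Theorem~\ref{T_Poly_Ind} --- to force the summand attached to the largest exponent to be, by itself, a generalized exponential polynomial; that is, $f_{n}(x^{p_{n}}) g_{n}(x)^{q_{n}}$ is one. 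Comparing its exponential components with those of the explicit, nonzero generalized exponential polynomial $f_{n}(x^{p_{n}}) = \sum_{l} m_{l}^{p_{n}} R_{n, l}$ (again via Lemma~\ref{L_Lin_Ind}) and using that $g_{n}$ is additive on $\mathbb{F}$, one concludes that $g_{n}$ is a generalized exponential polynomial. Deleting $p_{n}$ and running the argument for $\left\{ p_{1}, \ldots, p_{n-1} \right\}$ settles $g_{n-1}$, and so on down to $g_{1}$; finally Theorem~\ref{thm_Laczk} yields that each of $f_{1}, \ldots, f_{n}, g_{1}, \ldots, g_{n}$ is decomposable on $\mathbb{F}^{\times}$, which is the assertion.

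The step I expect to be the crux is precisely this separation of the $n$ summands. In equation $(\ast)$ the independent variables can be split between the $f$-factor and the $g$-factor in one stroke, because $g_{i}$ is applied there to a product; in \eqref{Eq_mixed}, by contrast, $g_{i}$ enters only through the $q_{i}$-th power of its value at a single argument, so there is no direct route to ``$g_{i}$ evaluated at a product of several variables,'' and a priori nothing ties $g_{i}(xy)$ to $g_{i}(x)$ and $g_{i}(y)$ --- this is exactly the obstruction responsible for the failure of the analogous statement for equation $(\diamond)$, where arbitrary additive functions do appear in the solution space. What rescues \eqref{Eq_mixed} is the asymmetry of the parameters enforced by C(i)--C(iii): it makes the exponentials $m_{l}^{p_{i}}$ differ from term to term, so that the rigidity of generalized exponential polynomials --- their linear independence across distinct exponentials --- can be applied summand by summand. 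Getting the bookkeeping of this block decomposition right, and handling the case distinctions caused by $g_{i}(1)$ possibly vanishing --- treated, as in Lemma~\ref{lem_decop_mixed}, by replacing the substitution ``$= 1$'' with ``$= a$'' for a suitable $a \in \mathbb{F}^{\times}$ --- is where the real work will lie.
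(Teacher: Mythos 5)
Your reduction of the theorem to the decomposability of $g_{1}, \ldots, g_{n}$, and your diagnosis of where the difficulty lies, agree with the paper; but the step you rely on to overcome that difficulty does not work as stated. After writing $f_{i}(x^{p_{i}})=\sum_{l} m_{l}(x)^{p_{i}}R_{i,l}(x)$ you invoke Lemma~\ref{L_Lin_Ind} and Theorem~\ref{T_Poly_Ind} to force the summand with the largest exponent, $f_{n}(x^{p_{n}})g_{n}(x)^{q_{n}}$, to be a generalized exponential polynomial. Those independence results are applicable only when all the coefficients attached to the distinct exponentials are themselves generalized polynomials on $\mathbb{F}^{\times}$ (respectively, polynomials of prescribed additive data on $\mathbb{F}^{\times}$); here the coefficients would be $R_{i,l}\cdot g_{i}^{q_{i}}$, and the $g_{i}$ are merely additive on $\mathbb{F}$ --- as functions on the multiplicative group they have no known polynomial or exponential-polynomial structure, which is exactly what is to be proved. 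The argument is therefore circular: the unknown multiplicative content of the $g_{i}$ can itself carry exponentials that mix into the $m_{l}^{p_{i}}$ and recombine across different summands, so sorting the identity into blocks according to the exponentials of the $f_{i}$ alone is not legitimate. Example~\ref{Exmix} shows the phenomenon concretely: there $g_{1}=g_{2}=m_{1}+m_{2}$, the cancellation happens between the two summands and not within exponential blocks of either one, and no summand is singled out by the $f_{i}$'s exponentials. (Two further unproved steps remain even if the claim were granted: possible coincidences $m_{l}^{p_{i}}=m_{l'}^{p_{j}}$ for $l\neq l'$, and the passage from ``$f_{n}(x^{p_{n}})g_{n}(x)^{q_{n}}$ is an exponential polynomial with $f_{n}(x^{p_{n}})$ a nonzero one'' to ``$g_{n}$ is an exponential polynomial,'' which needs a division and $q_{n}$-th root extraction argument you do not supply.) Also, the multi-variable detour you sketch does not help, because $g_{i}(xy)$ is not controlled by $g_{i}(x)$ and $g_{i}(y)$.

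The paper proceeds by a different device, a dichotomy on the unknown functions themselves: since the $f_{i}$ are additive and decomposable, they are built from higher order derivations (composed with homomorphisms); assuming some $g_{i}$ has, in its decomposition as an additive function, a summand $a$ lying outside this class --- i.e.\ linearly independent of all the ``nice'' additive functions occurring --- one isolates, by the independence theorems, precisely the terms of \eqref{Eq_mixed} containing $a$, obtaining $\sum_{i=1}^{n} f_{i}(x^{p_{i}})\bigl(c_{i}a(x)\bigr)^{q_{i}}=0$, and then the distinctness of the $q_{1}, \ldots, q_{n}$ and a second application of independence force such an $a$ not to occur. Hence each $g_{i}$ is itself a generalized exponential polynomial, and so decomposable. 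It is this splitting of each $g_{i}$ into a nice part plus an independent remainder that makes the independence machinery applicable; that idea is missing from your argument, and without it the separation step at the heart of your proposal fails.
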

\begin{proof}
Due to Lemma \ref{lem_decop_mixed}, the functions $f_{1}, \ldots, f_{n}$ are decomposable functions, hence generalized exponential polynomials on the Abelian group $\mathbb{F}^{\times}$. At the same time, they are assumed to be additive on $\mathbb{F}$. Thus these functions are higher order derivations on the field $\mathbb{F}$. Therefore, $x\longmapsto f_i(x^{p_i})$ is a linear combination of the products of higher order derivations. Let us denote all derivations on $\mathbb{F}$ by $\mathscr{D}$. If $g_i$ are not in $\mathscr{D}$ then there is a summand in the decomposition of $g_i$, which is not in $\mathscr{D}$. Let us denote it by $a$. Then, by Lemma \ref{L_Lin_Ind},  we get that
 \[
 \sum_{i=1}^n f_i(x^{p_i})\cdot(c_i\cdot a(x))^{q_i}=0
 \qquad \left(x\in \mathbb{F}\right),
 \]
with some constants $c_{1}, \ldots, c_{n}\in \mathbb{C}$, since these are exactly those terms that contain $a$. Using the fact that $q_i$ are distinct and  Lemma \ref{L_Lin_Ind}, we have that $a$ and thus $g_{1}, \ldots, g_{n}$ have to be in $\mathscr{D}$. Hence not only $f_{1}, \ldots, f_{n}$, but also  $g_{1}, \ldots, g_{n}$ are decomposable functions.
\end{proof}

%\begin{rem}
% Observe that in contrast to Lemma \ref{lem_decop_inner}, in Lemma \ref{lem_decop_mixed}, at this stage, we can prove decomposability only for the functions $f_{1}, \ldots, f_{n}$ and we have in general no such information for the functions $g_{1}, \ldots, g_{n}$.
%\end{rem}

\begin{rem}
 Note that the statement of Lemma \ref{lem_decop_mixed} holds true under milder conditions.
 Indeed, compared to equation
 \[
\sum_{i=1}^{n}f_{i}(x^{p_{i}})g_{i}(x^{q_{i}})=0
\qquad
\left(x\in \mathbb{F}\right)
 \]
 that was studied in \cite{GseKis22a},
in equation \eqref{Eq_mixed} the role of the parameters is \emph{not equal}. During the proof it was enough to use only that the parameters $p_{1}, \ldots, p_{n}$ are different. This is important because in such a way it becomes clear that equation
 \begin{equation}\label{eq_Ebanks}
\sum_{i=1}^{n}f_{i}(x^{p_{i}})x^{q_{i}}=0
 \end{equation}
is a special case of equation \eqref{Eq_mixed}.  We remark that equation \eqref{eq_Ebanks} plays a fundamental role in the characterization of higher order derivations, see \cite{Eba15, Eba17, EbaRieSah, GseKisVin18}.
\end{rem}

\section{Main results}\label{SS2.3}

\begin{thm}\label{thm_mixed}
 Let $n$ be a positive integer, $\mathbb{F}\subset \mathbb{C}$ be a field and
 $p_{1}, \ldots, p_{n}, q_{1}, \ldots, q_{n}$ be fixed positive integers fulfilling conditions C(i)--C(iii) and we further assume $q_i<\frac{N}{2}$ for all $i=1, \dots, n$.

\noindent
Suppose that the additive functions $f_{1}, \ldots, f_{n}, g_{1}, \ldots, g_{n}\colon \mathbb{F}\to \mathbb{C}$ satisfy functional equation
\eqref{Eq_mixed}
 for each $x\in \mathbb{F}$. Then there exists a positive integer $l$, there exist exponentials $m_i\colon \mathbb{F}^{\times}\to \mathbb{C}$ and generalized polynomials $P_{i}, Q_{i}\colon \mathbb{F}^{\times}\to \mathbb{C}$ of degree at most $l$ such that
 \begin{equation}\label{eq_mixred}
  f_{i}(x)= P_{i}(x)m_i(x)
  \qquad
  \text{and}
  \qquad
  g_{i}(x)= Q_{i}(x)m_i(x)
  \qquad
  \left(x\in \mathbb{F}^{\times}\right)
 \end{equation}
for each $i=1, \ldots, n$.
\end{thm}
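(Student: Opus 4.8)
The plan is to feed the structural output of Theorem~\ref{thm_decop_mixed2} into the linear‑independence machinery (Lemma~\ref{L_Lin_Ind}, Theorem~\ref{T_Poly_Ind}) and then run a combinatorial induction on $i$ that is driven by the inequality $q_i<N/2$. Throughout I use that, as noted in the paper, we may assume all $f_i,g_i$ are non‑identically zero.

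First I would invoke Theorem~\ref{thm_decop_mixed2}: the functions $f_1,\dots,f_n,g_1,\dots,g_n$ are generalized exponential polynomials on $\mathbb{F}^{\times}$. Since they are also additive on $\mathbb{F}$, the structure theory of exponential polynomials on groups (cf.\ \cite{Sze91}) together with Theorem~\ref{thm_KisLac} allows me to write, over one common finite family of pairwise distinct exponentials $m_1,\dots,m_r\colon\mathbb{F}^{\times}\to\mathbb{C}$ \emph{that are additive on $\mathbb{F}$},
\[
  f_i=\sum_{t=1}^{r}P_{i,t}\,m_t,\qquad g_i=\sum_{t=1}^{r}Q_{i,t}\,m_t\qquad(i=1,\dots,n),
\]
where $P_{i,t},Q_{i,t}$ are generalized polynomials, in fact polynomials in finitely many additive functions $a_1,\dots,a_l$ of $\mathbb{F}^{\times}$ which I may take linearly independent over $\mathbb{C}$. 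Using $m_t(x^{p_i})=m_t(x)^{p_i}$ and $a_j(x^{p_i})=p_i\,a_j(x)$, I substitute these into \eqref{Eq_mixed} and expand $g_i(x)^{q_i}$ by the multinomial formula. Collecting terms according to the \emph{formal} exponential monomial $m^{\mathbf e}:=\prod_t m_t^{e_t}$, every monomial that occurs has $|\mathbf e|=N$ (here C(ii) enters), so \eqref{Eq_mixed} becomes $\sum_{|\mathbf e|=N}R_{\mathbf e}\,m^{\mathbf e}=0$ with each $R_{\mathbf e}$ a polynomial in $a_1,\dots,a_l$; by Theorem~\ref{T_Poly_Ind} every $R_{\mathbf e}$ vanishes identically.

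The heart of the argument is the following bookkeeping. Term $i$ of \eqref{Eq_mixed} can contribute to $R_{\mathbf e}$ only through a choice of a coordinate $c$ with $P_{i,c}\not\equiv 0$ and a multi‑index $\mathbf d\ge 0$ with $|\mathbf d|=q_i$ and $p_i\mathbf u_c+\mathbf d=\mathbf e$ (so $e_c\ge p_i$), the contribution being $P_{i,c}(x^{p_i})$ times the coefficient of $m^{\mathbf d}$ in $g_i(x)^{q_i}$. Since $q_i<N/2$ we have $p_i=N-q_i>N/2$ for every $i$ (in particular $p_j>q_i$ for all $i,j$, so C(iii) is automatic), and hence a multi‑index $\mathbf e$ with $|\mathbf e|=N$ has at most one coordinate reaching size $p_j$; this rigidity is exactly what powers the induction on $i$. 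If $r=1$ there is nothing to prove, so assume $r\ge 2$, and suppose inductively (vacuously for $i=1$) that for every $j<i$ we already know $f_j=P_{j}m_{\tau(j)}$ and $g_j=Q_{j}m_{\tau(j)}$ for a single exponential $m_{\tau(j)}$, so that term $j$ contributes only to $R_{N\mathbf u_{\tau(j)}}$. Pick $a$ with $P_{i,a}\not\equiv 0$ and any $b\ne a$, and set $\mathbf e:=p_i\mathbf u_a+q_i\mathbf u_b$; then $|\mathbf e|=N$, its unique coordinate of size $>N/2$ is $a$, and one checks directly: terms $j<i$ cannot contribute (their monomial $N\mathbf u_{\tau(j)}$ is supported on one coordinate, whereas $\mathbf e$ has two nonzero coordinates); terms $j>i$ cannot contribute (no coordinate of $\mathbf e$ reaches $p_j>N/2$, since $e_a=p_i<p_j$ and $e_b=q_i<N/2$); and term $i$ contributes exactly $P_{i,a}(x^{p_i})\,Q_{i,b}(x)^{q_i}$ (the only solution of $p_i\mathbf u_c+\mathbf d=\mathbf e$ is $c=a$, $\mathbf d=q_i\mathbf u_b$, and the coefficient of $m^{q_i\mathbf u_b}$ in $g_i^{q_i}$ is $Q_{i,b}^{q_i}$). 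Hence $P_{i,a}(x^{p_i})Q_{i,b}(x)^{q_i}\equiv 0$. As $P_{i,a}\not\equiv 0$ forces $x\mapsto P_{i,a}(x^{p_i})\not\equiv 0$ (the substitution $x\mapsto x^{p_i}$ merely rescales the homogeneous parts, and generalized monomials of distinct degrees are linearly independent), and $\mathbb{C}[a_1,\dots,a_l]$ is an integral domain, we conclude $Q_{i,b}\equiv 0$ for every $b\ne a$. Thus $g_i$ is supported on the single index $a$; and if $f_i$ were supported on two indices $a\ne a'$, applying this to each of them would force $g_i\equiv 0$, a contradiction. Therefore $f_i=P_{i,a}m_a$ and $g_i=Q_{i,a}m_a$; putting $\tau(i):=a$ completes the induction. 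Finally, setting $m_i:=m_{\tau(i)}$, $P_i:=P_{i,\tau(i)}$, $Q_i:=Q_{i,\tau(i)}$ and $l:=\max_i\max(\deg P_i,\deg Q_i)$ gives \eqref{eq_mixred}.

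The step I expect to be the main obstacle is the very first reduction: writing an additive function on $\mathbb{F}$ that is a generalized exponential polynomial on $\mathbb{F}^{\times}$ in the form $\sum_t P_t m_t$ with the $m_t$ \emph{additive on $\mathbb{F}$} (needed so that Theorem~\ref{T_Poly_Ind} applies) and with the $P_t$ genuine polynomials in linearly independent additive functions of $\mathbb{F}^{\times}$ — that is, spelling out the ``decomposable additive function $=$ sum of (differential operator)$\circ$(homomorphism)'' dictionary and the finite‑dimensionality of the relevant varieties. Granting the theory behind Theorem~\ref{thm_KisLac} and \cite{KisLac18, Lac19}, this is routine but is where the real work of justification sits; the only other mild nuisance, the non‑vanishing of products and of the maps $x\mapsto P(x^{p_i})$, is dispatched by the integral‑domain remark above.
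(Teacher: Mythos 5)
Your proposal is correct, and its skeleton coincides with the paper's: decomposability (Theorem \ref{thm_decop_mixed2}), a representation $f_i=\sum_t P_{i,t}m_t$, $g_i=\sum_t Q_{i,t}m_t$ with exponentials additive on $\mathbb{F}$, the independence machinery of Theorem \ref{T_Poly_Ind}, and then coefficient extraction driven by $q_i<\frac{N}{2}$ (equivalently $p_i>\frac{N}{2}$). Where you genuinely diverge is in the execution of the extraction step. The paper first reduces, via algebraic independence, to the case of exactly \emph{two} exponentials with \emph{constant} coefficients $\bar f_i=\alpha_{i,1}m_1+\alpha_{i,2}m_2$, $\bar g_i=\beta_{i,1}m_1+\beta_{i,2}m_2$, and then runs a descending induction (from the largest $q_i$) on the coefficients of $m_1^{N-l}m_2^{l}$, with binomial bookkeeping, a separate treatment of the case $\beta_{i,1}=0$, and a final $I_1/I_2$ argument to match the exponential of $f_i$ with that of $g_i$. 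You instead keep all $r$ exponentials and the polynomial coefficients, and for each $i$ kill every off-diagonal $Q_{i,b}$ with a single well-chosen test monomial $\mathbf e=p_i\mathbf u_a+q_i\mathbf u_b$: the rigidity ``at most one coordinate of $\mathbf e$ can reach any $p_j$'' (which is exactly where $q_i<\frac{N}{2}$ enters, the same inequality the paper exploits) isolates the contribution $P_{i,a}(x^{p_i})Q_{i,b}(x)^{q_i}$, and the integral-domain/linear-independence argument (Lemma \ref{L_lin_dep}) finishes. This buys a shorter, less case-ridden argument in which the matching of $f_i$'s and $g_i$'s exponential comes out in the same stroke, at the price of heavier multi-index notation; the paper's two-exponential reduction is more pedestrian but makes each coefficient equation completely explicit. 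One caveat: your assertion that the $P_{i,t},Q_{i,t}$ are genuine polynomials in finitely many linearly independent additive functions (needed for Theorem \ref{T_Poly_Ind} and for the integral-domain step) requires the same reduction to finitely generated subfields that the paper performs at the start of its proof, followed by gluing back to $\mathbb{F}$; you flag this as the main obstacle rather than carrying it out, but it is handled exactly as in the paper, so it is not a substantive gap relative to the paper's own standard of rigor.
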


\begin{proof}
Due to Lemma \ref{lem_decop_mixed}, the solutions $f_{1}, \ldots, f_{n}$ and $g_{1}, \ldots, g_{n}$ of equation \eqref{Eq_mixed} are decomposable functions. Hence, they are generalized exponential polynomials on the Abelian group $\mathbb{F}^{\times}$. Accordingly, there exists a positive integer $l$, for all $j=1, \ldots, l, i=1, \ldots, n$ there exist exponentials $m_{j}\colon \mathbb{F}^{\times}\to \mathbb{C}$ and generalized polynomials $P_{i, j}, Q_{i, j}\colon \mathbb{F}^{\times}\to \mathbb{C}$ of degree at most $l$ such that
 \begin{equation}\label{genform}
  f_{i}(x)= \sum_{j=1}^{l} P_{i, j}(x)m_j(x)
  \qquad
  \text{and}
  \qquad
  g_{i}(x)= \sum_{j=1}^{l} Q_{i, j}(x)m_j(x)
  \qquad
  \left(x\in \mathbb{F}^{\times}\right).
 \end{equation}
 We can assume that $\mathbb{F}$ is finitely generated. If so, then the generalized polynomials $P_{i, j}, Q_{i, j}$ are polynomials of degree at most $l$. This technical assumption makes the argument simpler, since whenever we get that $f_i=P_im_i$ and $g_i=Q_im_i$, where $P_i, Q_i$ are polynomials of degree at most $l$ for all finitely generated subfield of $\mathbb{F}$, then  $f_i=P_im_i$ and $g_i=Q_im_i$ holds on $\mathbb{F}$, where $P_i, Q_i$ are generalized polynomial of degree at most $l$.

Suppose contrary that there is an $i\in \{1,\dots, n\}$  such that there are $j_1\ne j_2$ so that $P_{i,j_1}\ne 0$ and $P_{i,j_2}\ne 0$. We can assume that $j_1=1, j_2=2$. Since, by Theorem \ref{T_Poly_Ind}, all $P_i\cdot m_i$ are algebraically independent, the additive functions $\tilde{f}_i$ and $\tilde{g}_i$ are also satisfy equation \eqref{Eq_mixed}, where $$\tilde{f}_i=P_{i, 1}\cdot m_{1}+P_{i, 2}\cdot m_{2} ~~~~ \tilde{g}_i=Q_{i, 1}\cdot m_{1}+Q_{i, 2}\cdot m_{2}.$$

As $P_{i,1},P_{i,2},Q_{i,1},Q_{i,2}$ are now polynomials, they can be written of the form $P(a_1(x), \dots , a_n(x))$, where $P$ is a classical complex polynomial in $n$ variables and $a_1, \dots a_n$ are additive functions from $\mathbb{F}^{\times}$ to $\mathbb{C}$ (usually called logarithmic functions).
Again, by the algebraic independence, equation \eqref{Eq_mixed} holds for each monomial terms of $P(a_1(x), \dots , a_n(x))$. Thus, without loss of generality, we can assume that there are some $\al_{i,1}, \al_{i,2}, \beta_{i,1},\beta_{i,2}\in \C$ such that \begin{equation}\label{genmix}
\bar{f}_i=\al_{i,1}m_1+\al_{i,2}m_2 ~~~~ \bar{g}_i=\beta_{i,1}m_1+\beta_{i,2}m_2,
\end{equation}
which satisfy \eqref{Eq_mixed}.
It is clear that for each monomial terms of polynomials $P_{i,1},P_{i,2},Q_{i,1},Q_{i,2}$ we have the same type of equations as above multiplied by a fix monomial. Furthermore, if we can prove that in this case $\al_{i,1}=\beta_{i,1}=0$ or $\al_{i,2}=\beta_{i,2}=0$ hold for every $i\in\{1,\dots,n\}$, then this argument can be applied for each monomial terms and different indices $j_1\ne j_2$. Hence we get the statement. Therefore, the statement is equivalent to show that
if \eqref{genmix} is a solution of \eqref{Eq_mixed} then $\al_{i,1}=\beta_{i,1}=0$ or $\al_{i,2}=\beta_{i,2}=0$ hold for every $i\in\{1,\dots,n\}$. Thus,
we can write \begin{equation}\label{eqremix2}\sum_{i=1}^n \bar{f}_i(x^{p_i})\bar{g}_i^{q_i}(x)=\sum_{i=1}^n (\al_{i,1}m_1^{p_i}+\al_{i,2}m_2^{p_i})(x)(\beta_{i,1}m_1+\beta_{i,2}m_2)^{q_i}(x)=0.\end{equation}
First we assume that $\beta_{i,1}\ne 0$ for all $i\in \{1, \dots, n\}$. Let $a_{i,1}=\dfrac{\al_{i,1}}{(\beta_{i,1})^{q_i}}$, $a_{i,2}=\dfrac{\al_{i,2}}{(\beta_{i,1})^{q_i}}$ and $b_i=\dfrac{\beta_{i,2}}{\beta_{i,1}}$.
Then the previous equation can be reformulated as
$$\sum_{i=1}^n (a_{i,1}m_1^{p_i}+a_{i,2}m_2^{p_i})(m_1 +b_im_2)^{q_i}=0.$$
In this case all the coefficients of $m_1^{N-l}m_2^l$ have to vanish for each $l=1,\dots, N$. In other words, we have
\begin{equation}\label{eqremix}
    \sum_{i=1}^n\left(\binom{q_i}{l}a_{i,1}b_i^l+\binom{q_i}{N-l}a_{2,i}b_i^{N-l}\right)=0,
    \end{equation}
where $\displaystyle\binom{q_i}{l}$ and $\displaystyle\binom{q_i}{N-l}$, resp. are defined to be $0$ if $l>q_i$, resp. $N-l>q_i$.

Note that till now we did not use the assumption $q_i< \frac{N}{2}$. As $q_i+p_i=N$ and all $p_i,q_i$ are different we have that $p_i>q_i$  and hence we can assume that $p_1>\dots>p_n>q_n>\dots>q_1$. The condition $q_i< \frac{N}{2}$ implies that for every $l\in \{1, \dots, N\}$ and $i\in\{1,\dots,n\}$ at least one of the summand of $\binom{q_i}{l}a_{i,1}b_i^l+\binom{q_i}{N-l}a_{2,i}b_i^{N-l}$ vanishes. Furthermore, for $l=q_n$ the coefficient of $m_1^{N-q_n}m_2^{q_n}$ is $\binom{q_n}{q_n}a_{n,1}\cdot b_n^{q_n}=0$. Similarly, the coefficient $m_1^{q_n}m_2^{N-q_n}$ is $\binom{q_n}{q_n}a_{n,2}\cdot b_n^{N-q_n}=0$. This implies that either $a_{n,1}=a_{n,2}=0$ or $b_n=0$. In the first case we get $\bar{f}_n=0$, then $n$ can be reduced to $n-1$. In the second case we get that $\bar{g}_n=\beta_{n,1}m_1$ and $\bar{g}_n$ has no influence in any term that contains $m_2$. In both cases we can reduce from $n$ to  $n-1$ and now we can proceed by an inductive argument from $n$ to $1$.

Suppose that for $1\le s<n$ we have that $\bar{f}_i\ne 0$ and $\bar{g}_i=\beta_{i,1}m_1$, $\beta_{i,1}\ne 0$ for all $s< i\le n$. Now we consider the coefficients of $m_1^{N-q_s}m_2^{q_s}$  and $m_1^{q_s}m_2^{N-q_s}$ respectively, which are $$ a_{s,1}\cdot b_s^{q_s}=0~~~~ a_{s,2}\cdot b_s^{N-q_s}=0.$$ Indeed, for $i<s$ both $\binom{q_i}{q_s}=\binom{q_i}{N-q_s}=0$, while for $i>s$ the coefficients of $m_1^{q_s}m_2^{N-q_s}$ and $m_1^{N-q_s}m_2^{q_s}$ are 0, since $q_i>q_s$ and $q_i>p_s$. Hence we have that either $a_{s,1}=a_{s,2}=0$ or $b_s=0$. Then either $\bar{f}_s=0$ or $\bar{g}_s=\beta_{s,1}m_1$.

If not all $\beta_{i,1}\ne 0$, then for those $i'\in \{1, \dots, n\}$ such that $\beta_{i',1}=0$ we have $\bar{g}_{i'}=\beta_{i',2}m_2$. In other cases a similar argument works as above. Thus we get that every $\bar{g}_i$ ($i\in \{1, \dots, n\}$) is either $\beta_{i,1}m_1$ or $\beta_{i,2}m_2$. Now we show that if $\bar{g}_i=\beta_{i,1}m_1$ (reps. $\bar{g}_i=\beta_{i,2}m_2$), then $f_i=\al_{i,1}m_1$ (reps. $f_i=\al_{i,2}m_2)$. In this case there are disjoint subsets $I_1, I_2$ of $\{1,\dots, n\}$ such that $I_1\cap I_2=\{1,\dots, n\}$ and equation \eqref{eqremix2} can be written $$\sum_{i\in I_1} (\al_{i,1}m_1^{p_i}+\al_{i,2}m_2^{p_i})(x)(\beta_{i,1}m_1)^{q_i}(x) + \sum_{i\in I_2} (\al_{i,1}m_1^{p_i}+\al_{i,2}m_2^{p_i})(x)(\beta_{i,2}m_2)^{q_i}(x)=0. $$
Let $c_1$ and $c_2$ denote the coefficients of the monomial $m_1^{p_i}m_2^{q_i}$ and  $m_1^{q_i}m_2^{p_i}$ respectively.  Then $$c_1=\begin{cases}0& \textrm{ if }i\in I_1 \\
\al_{i,2}\beta_{i,1}^{q_i} & \textrm{ if }i\in I_2
\end{cases} ~~~~~~~~~ c_2= \begin{cases}
\al_{i,1}\beta_{i,2}^{q_i} & \textrm{ if }i\in I_1\\
0& \textrm{ if }i\in I_2
\end{cases}.$$
Eliminating those terms where $\bar{g}_i\equiv 0$, we get that if $\beta_{i,1}\ne 0$, then $\al_{i,2}=0$ and similarly if $\beta_{i,2}\ne 0$, then $\al_{i,1}=0$. This completes the proof of the theorem.
\end{proof}

%Substituting these representations back to equation \eqref{Eq_mixed}, we get
%\begin{multline*}
%0=
%\sum_{i=1}^{n}f_{i}(x^{p_{i}})g^{q_{i}}(x)
%\\
%=
%\sum_{i=1}^{n}\left(\sum_{j=1}^{l}P_{i, j}(x^{p_{i}})m_{k}(x^{p_{i}})\right)
%\cdot \left(\sum_{j=1}^{l} Q_{i, j}(x)m_j(x) \right)^{q_{i}}
%=
%\sum_{i=1}^{n}\left(\sum_{j=1}^{l}P_{i, j}(x^{p_{i}})m_{k}(x)^{p_{i}}\right)
%\cdot \left(\sum_{j=1}^{l} Q_{i, j}(x)m_j(x) \right)^{q_{i}}
%\\
%=
%\sum_{i=1}^{n}\left(\sum_{j=1}^{l}P_{i, j}(x^{p_{i}})m_{k}(x)^{p_{i}}\right)
%\cdot
%\left(\sum_{\substack{k_{1}, \ldots, k_{l}\geq 0\\
%k_{1}+\cdots+k_{l}=q_{i}}} \binom{q_{i}}{k_{1}, \ldots, k_{l}}
%\left(Q_{i, 1}(x)m_{1}(x)\right)^{k_{1}} \cdots \left(Q_{l, %i}(x)m_{l}(x)\right)^{k_{l}}\right)
%\end{multline*}

%\textcolor{red}{Eszter: Innen az lenne az ötlet, hogy ha ezt kibontjuk, akkor ez egy pont olyan típusú azonosság, mint amilyen az \eqref{Eq_inner} egyenlet analóg állításának a bizonyításában volt és akkor (talán) azt ottani gondolatmenet használható részben, vagy egészben.}

%\end{proof}

%\begin{rem}
%Analysing the previous proof we can realize that $q_n<p_n$ was not heavily used. So we can sightly weaken the condition of Theorem \ref{thm_mixed} by assuming $q_i\le \dfrac{N}{2}$ and $\max\left\{ q_{1}, \ldots, q_{n}\right\}=\min\left\{ p_{1}, \ldots, p_{n}\right\}$ is also allowed.
%\end{rem}

As we noticed, the statement of Theorem \ref{thm_mixed} is not necessarily true without the assumption $q_i\le \frac{N}{2}$ for all $i\in \{1, \dots, n\}$. In the following we show an example where neither $f_i$, nor $g_i$ is of the form $P\cdot m$.

\begin{ex}\label{Exmix}
Let $n=2$ and $p_1=2, p_2=1$ in equation \eqref{Eq_mixed}, i.e., assume that we have
\[
f_{1}(x^{2})g_{1}^{N-2}(x)+f_{2}(x)g_{2}^{N-1}(x)=0
\qquad
\left(x\in \mathbb{F}\right).
\]
Let further $m_{1}$ and $m_{2}$ be different exponentials on $\mathbb{F}^{\times}$ and define the functions
$f_{1}, f_{2}, g_{1}, g_{2}$ on $\mathbb{F}^{\times}$ by
\[
\begin{array}{rcl}
f_{1}(x)&=& m_{1}(x)-m_{2}(x)\\
f_{2}(x)&=& m_{2}(x)-m_{1}(x) \\
g_{1}(x)&=& m_{1}(x)+m_{2}(x)\\
g_{2}(x)&=& m_{1}(x)+m_{2}(x)
\end{array}
\qquad
\left(x\in \mathbb{F}^{\times}\right).
\]
Then
\begin{multline*}
    f_{1}(x^{2})g_{1}^{N-2}(x)+f_{2}(x)g_{2}^{N-1}(x)
    \\
    =
    \left(m_{1}^{2}(x)-m^{2}_{2}(x)\right)\cdot \left(m_{1}(x)+m_{2}(x)\right)^{N-2}+
    \left(m_{2}(x)-m_{1}(x)\right)\cdot \left(m_{1}(x)+m_{2}(x)\right)^{N-1}
    \\
    =
    \left(m_{1}(x)-m_{2}(x)\right)\cdot (m_{1}(x)+m_{2}(x))\cdot \left(m_{1}(x)+m_{2}(x)\right)^{N-2}+
    (-1)\cdot \left(m_{1}(x)-m_{2}(x)\right)\cdot \left(m_{1}(x)+m_{2}(x)\right)^{N-1}=0
\end{multline*}
for all $x\in \mathbb{F}^{\times}$. Since the involved functions $f_{1}, f_{2}, g_{1}, g_{2}$ were assumed to be additive, as well, we get that if $\varphi_{1}, \varphi_{2}\colon \mathbb{F}\to \mathbb{C}$ are homomorphisms and we consider the following functions
\[
\begin{array}{rcl}
f_{1}(x)&=& \varphi_{1}(x)-\varphi_{2}(x)\\
f_{2}(x)&=& \varphi_{2}(x)-\varphi_{1}(x) \\
g_{1}(x)&=& \varphi_{1}(x)+\varphi_{2}(x)\\
g_{2}(x)&=& \varphi_{1}(x)+\varphi_{2}(x)
\end{array}
\qquad
\left(x\in \mathbb{F}^{\times}\right),
\]
then the above equation is fulfilled for all $x\in \mathbb{F}$. This shows that the condition $q_{i}< \dfrac{N}{2}$ cannot be  omitted from Theorem \ref{thm_mixed} in general.
\end{ex}

\begin{rem}
It is very important to emphasize that when we talk about the solutions of equation \eqref{Eq_mixed}, we look for the solutions among \emph{additive} functions.
If we omit the condition of additivity but the solutions are still exponential polynomials of degree different from zero, then with a similar argument as above, one can show that there could be found solutions having the similar form (i.e., there are at least two different exponentials in the solutions).
To see this, let $m_{1}, m_{2}$ be different exponentials and $a$ be an additive function on the multiplicative group $\mathbb{F}^{\times}$ and consider the functions
\[
\begin{array}{rcr}
f_{1}(x)&=&a(x)(m_{1}(x)-m_{2}(x))\\
f_{2}(x)&=&-2a(x)(m_{1}(x)-m_{2}(x))\\
g_{1}(x)&=& m_{1}(x)+m_{2}(x)\\
g_{2}(x)&=& m_{1}(x)+m_{2}(x)
\end{array}
\qquad
\left(x\in \mathbb{F}^{\times}\right).
\]
An easy computation shows that in this case we have
\[
f_{1}(x^{2})g_{1}^{N-2}(x)+f_{2}(x)g_{2}^{N-1}(x)=0
\]
holds for all $x\in \mathbb{F}$.
It is important to emphasize however that these functions will be additive only if the functions $a$ and $m_{1}, m_{2}$ appearing in the above representations satisfy $m_1(x)-m_2(x)=0$ if $a(x)\neq 0$, i.e., the previous system of equation becomes trivial.
From Lemma \ref{lem2tag2} one can also deduce that the above functions are not additive in general.
\end{rem}

%%%%%%%%%%%%%%%%%%
%Example 2.
%%%%%%%%%%%%%%%%%%

As an intermediate result in connection to Theorem \ref{thm_mixed} and Example \ref{Exmix} is the following example we show that the assumptions of Theorem \ref{thm_mixed} are not sharp, as not all of the parameters should satisfy $q_i<\frac{N}{2}$. As a counterpart of Example \ref{Exmix}, we prove that the solutions in the following case are of the form $f_i=P_im$ and $g_i=Q_im$, for generalized polynomials $P_i, Q_i$ and exponential $m$.

\begin{ex}\label{Exmix2}
Consider equation
\begin{equation}\label{Ex2.1}f_1(x^k)g_1(x)^{N-k}+f_2(x^{N-l})g_2(x)^l=0\end{equation}
for all $x\in \mathbb{F}$,
where $l+1<k\le \frac{N}{2}$.
%E.g., the following equations are in this class
%\[f_1(x^4)g_1(x)^6+f_2(x^8)g_2(x)^2=0 ~~~~\textrm{ or } ~~~~f_1(x^3)g_1(x)^4+f_2(x^6)g_2(x)=0.\]

As we showed above in this case there are solutions that can be represented as
\[
f_{i}(x)= a_{i, 1}m_{1}(x)+a_{i, 2}m_{2}(x)
\qquad
g_{i}(x)= m_{1}(x)+b_{i}m_{2}(x)
\qquad
\left(x\in \mathbb{F}\right).
\]
Now we show that if none of $f_i$ and $g_i$ vanishes, then $b_1=b_2=0$.
Calculating the coefficients of the term $m_1^sm_2^{N-s}$, we can observe that it is taken only from the first term $f_1(x^k)g_1(x^{N-k})$, if $l< s< N-l$.
If further $s<k$, then the coefficient of $m_1^sm_2^{N-s}$ satisfies
$$\binom{N-k}{s}a_{1,2}b_1^{N-k-s}=0.$$ Similarly, for the coefficient of $m_1^{N-s}m_2^s$ we get that $$\binom{N-k}{s}a_{1,1}b_1^{s}=0.$$
These equations imply that either $b_1=0$ or $a_{1,1}=a_{1,2}=0$. The latter is not possible as $f_1$ is not identically zero, thus $b_1=0$. Hence equation \eqref{Ex2.1} reduces to
$$\big(a_{1, 1}(m_{1}^k(x)+a_{1, 2}m_{2}^k(x)\big)m_1^{n-k}(x)+ \big(a_{2, 1}m_{1}^{N-l}(x)+a_{2, 2}m_{2}^{N-l}(x)\big)\big(m_{1}(x)+b_2m_{2}(x)\big)^{l}=0.$$
In this case  $a_{2,2}b_2^N=0$ can be obtained as the coefficient of $m_2^N$, and $a_{2,1} b_2^{l}=0$ is given as the coefficient $m_1^{N-l}m_2^{l}$, if $k\ne l$. % and $k\ne N-l$ as $k<\frac{N}{2}<N-l$.
A similar argument as above shows that $b_2=0$. Hence we can assume that $g_1=g_2=m_1$.
It is straightforward to verify that $f_1=a_{1,1}m_1$ and $f_2=a_{1,2}m_1$ in this case, which as in the proof of Theorem \ref{thm_mixed} implies that every solution of \eqref{Ex2.1} is of the
form $f_i=P_im$ and $g_i=Q_im$, where $P_i,Q_i$ are generalized polynomials and $m$ is an exponential on $\mathbb{F}^{\times}$.

Note that if we omit the assumption that there exists an $s$ such that $l<s<k$, then the coefficient $m_1^sm_2^{N-s}$ for $l<s<N-l$ appears in more then one term in the expansion of $f_1(x^k) (g_1(x))^{N-k}$, which makes the whole calculation much more complicated and it is not clear whether we can get similar result.
\end{ex}

\begin{rem}\label{rem_vegy_pol}Although Example \ref{Exmix} shows that equation \eqref{Eq_mixed} cannot automatically be reduced to solutions of type \eqref{eq_mixred}, by Theorem \ref{T_Poly_Ind}, algebraic independence guarantees that if a system of solutions is of the form \eqref{genform}, then there are also solutions of the form \eqref{eq_mixred} just keeping the terms containing a given $m$ in each $f_i$ and $g_i$. These reduced functions are additive as well and satisfy \eqref{Eq_mixed}. Therefore, from now on we are dealing with those solutions that are of the form $f_i=P_im$ and $g_i=Q_im$. By the equivalence relation $\sim$ introduced in Remark \ref{rem2.4}, we can assume that $f_i(x)=P_i(x)\cdot x$ and $g_i(x)=Q_i(x)\cdot x$, where $P_i, Q_i$ are generalized polynomial on $\mathbb{F}^{\times}$ of degree at most $K$. Hence by Theorem \ref{thm_KisLac}, these are derivations of order at most $K$ on any finitely generated subfield.
Thus, we may restrict ourselves to functions that are of the form
\begin{equation}\label{eq_mix_alap}
 f_{i}(x)= P_{i}(x)\cdot x =D_i(x)
 \qquad
\text{and}
\qquad
g_{i}(x)= Q_{i}(x)\cdot x=\widetilde{D}_i(x)
\qquad
\left(x\in \mathbb{F}^{\times}, i=1, \ldots, n\right).
 \end{equation}
\end{rem}

\medskip

%\begin{rem}
Every higher order derivation on $\mathbb{F}$ is a differential operator on any finitely generated subfield of $\mathbb{F}$ (see Theorem \ref{thm_KisLac} and \cite{KisLac18}). Hence on these fields the solutions are differential operators. Moreover, if every solution on any finitely generated subfield of $\mathbb{F}$ is a differential operator of order at most $n$, then every solution on  $\mathbb{F}$ is a derivation of order at most $n$. By this fact, from now on, instead of finding solutions as higher order derivations we may restrict ourselves to look for differential operators as solutions.
%\end{rem}

The space of differential operators is a linear space. On the other hand, settling a useful basis is not trivial. The following lemma provides such a basis. Its proof is based on generalized moment sequences and the notion of (multivariate) Bell polynomials. For further details we refer to \cite[Subsection 3.4]{GseKis22a}.

\begin{lem}\label{L:dermom}
 Let $\mathbb{F}\subset \mathbb{C}$ be a field, $r$ be a positive integer and
 $d_{1}, \ldots, d_{r}\colon \mathbb{F}\to \mathbb{F}$ be linearly independent derivations. For all multi-index $\alpha\in \mathbb{N}^{r}$, $\alpha= \left(\alpha_{1}, \ldots, \alpha_{r}\right)$ define the function $d^{\alpha}(x)\colon \mathbb{F}\to \mathbb{C}$ by

\[
 d^{\alpha}(x)=
  d_{1}^{\alpha_{1}} \circ \cdots \circ d_{r}^{\alpha_{r}}(x
=
\underbrace{d_{1}\circ \cdots \circ d_{1}}_\text{$\alpha_{1}$ times}
\circ \cdots \circ \underbrace{d_{r}\circ \cdots \circ d_{r}}_{\text{$\alpha_{r}$ times}}(x) \qquad
\qquad
\left(x\in \mathbb{F}^{\times}\right).
\]

Then $(d^{\alpha}(x))_{\alpha \in \mathbb{N}^{r}}$ constitute a basis of the differential operators constructed by $d_1, \dots, d_r$ in $\mathbb{F}$.
\end{lem}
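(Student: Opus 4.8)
The plan is to establish two things: (1) the family $\{d^{\alpha}\}_{\alpha\in\mathbb{N}^{r}}$ spans the space of differential operators built from $d_{1},\dots,d_{r}$, and (2) it is linearly independent over $\mathbb{F}$. Spanning is essentially by definition together with a normalization argument: a differential operator built from $d_{1},\dots,d_{r}$ is, by definition, an $\mathbb{F}$-linear combination of composites $e_{1}\circ\cdots\circ e_{k}$ where each $e_{j}\in\mathrm{span}_{\mathbb{F}}\{d_{1},\dots,d_{r}\}$ (here one must be a little careful: the definition of $\mathscr{O}_{n}$ allows arbitrary derivations as factors, so one first restricts attention to the subalgebra generated by $d_{1},\dots,d_{r}$, which is the object the lemma actually refers to). Expanding each $e_{j}$ and using multilinearity, every such composite is an $\mathbb{F}$-linear combination of words $d_{i_{1}}\circ\cdots\circ d_{i_{k}}$ in the letters $d_{1},\dots,d_{r}$. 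The point is then to reduce an arbitrary word to the ordered monomials $d^{\alpha}=d_{1}^{\alpha_{1}}\circ\cdots\circ d_{r}^{\alpha_{r}}$. This is where one invokes the commutator structure: for derivations $d_{i},d_{j}$ the bracket $[d_{i},d_{j}]=d_{i}\circ d_{j}-d_{j}\circ d_{i}$ is again a derivation, hence lies in $\mathscr{O}_{1}$, though not necessarily in $\mathrm{span}_{\mathbb{F}}\{d_{1},\dots,d_{r}\}$; consequently one must pass to the Lie algebra (or associative algebra) generated by the $d_{i}$, and a standard PBW-type straightening argument shows every word of length $k$ equals an ordered monomial of length $k$ plus $\mathbb{F}$-combinations of shorter composites of derivations from the generated algebra, which by downward induction on length are themselves ordered monomials. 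The cleanest route here is the one the authors advertise: use the description of composites of derivations via (multivariate) Bell polynomials and generalized moment sequences from \cite[Subsection 3.4]{GseKis22a}, which expresses $d^{\alpha}$ acting on products in closed form and makes the change of basis between words and ordered monomials explicit and triangular.

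For linear independence, suppose $\sum_{\alpha}c_{\alpha}\,d^{\alpha}=0$ with $c_{\alpha}\in\mathbb{F}$, not all zero. The strategy is to test this identity on carefully chosen elements. Since $d_{1},\dots,d_{r}$ are linearly independent derivations on $\mathbb{F}$, one can find elements $t_{1},\dots,t_{r}\in\mathbb{F}$ with $d_{i}(t_{j})=\delta_{ij}$ (this is the standard fact that linearly independent derivations admit a ``dual system'', available because $\mathbb{F}$ has characteristic zero and we may work inside a finitely generated subfield where $d_{1},\dots,d_{r}$ extend a separating transcendence basis; alternatively invoke the moment-sequence machinery directly). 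Then evaluate the relation on monomials $t^{\beta}=t_{1}^{\beta_{1}}\cdots t_{r}^{\beta_{r}}$: because $d_{i}(t_{j})=\delta_{ij}$ and the $t_{j}$ are, up to the derivations' action, independent, the quantity $d^{\alpha}(t^{\beta})$ evaluated at a suitable base point is $\beta!/(\beta-\alpha)!\cdot t^{\beta-\alpha}$ when $\alpha\le\beta$ and involves no cancellation, so the Vandermonde-type / triangular structure in $\beta$ forces all $c_{\alpha}=0$. Concretely, pick $\alpha_{0}$ maximal (in some fixed total order refining the coordinatewise order) with $c_{\alpha_{0}}\neq 0$, apply the relation to $t^{\alpha_{0}}$, and observe that $d^{\alpha}(t^{\alpha_{0}})$ vanishes for $\alpha\not\le\alpha_{0}$ while $d^{\alpha_{0}}(t^{\alpha_{0}})=\alpha_{0}!$ is a nonzero constant; the contributions of the remaining $\alpha<\alpha_{0}$ are handled by a secondary induction, or more slickly by noting they produce terms of strictly lower ``weight'' $|\beta|-|\alpha|$.

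The main obstacle, and the step deserving the most care, is the spanning direction — specifically the reduction of arbitrary composites $d_{i_{1}}\circ\cdots\circ d_{i_{k}}$ to ordered monomials $d^{\alpha}$ with $\mathbb{F}$-coefficients. The subtlety is that derivations do not commute, and $[d_{i},d_{j}]$ need not be an $\mathbb{F}$-linear combination of $d_{1},\dots,d_{r}$, so one genuinely needs the straightening/PBW argument within the (possibly larger) algebra of differential operators, controlling that the ``error terms'' produced when swapping adjacent letters have strictly smaller order and thus are eliminated by induction on $k$. I expect the Bell-polynomial formalism of \cite{GseKis22a} to do exactly this bookkeeping: it gives, for the action of a composite of derivations on a product, an explicit expansion whose leading term is the corresponding ordered monomial, with lower-order corrections, and this is precisely the triangularity needed for both halves of the proof. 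The linear-independence half, by contrast, is routine once the dual system $t_{1},\dots,t_{r}$ is in place.
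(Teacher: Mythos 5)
Two observations before the substance: the paper itself gives no proof of Lemma~\ref{L:dermom} — it only says the proof rests on generalized moment sequences and multivariate Bell polynomials and refers to \cite[Subsection 3.4]{GseKis22a} — and your proposal ultimately defers the decisive bookkeeping to that very reference, so it is not a self-contained argument. More importantly, the two halves you do argue yourself each contain a genuine gap. For the independence half, the ``dual system'' you invoke does not exist in general: linear independence of derivations does \emph{not} yield $t_1,\ldots,t_r\in\mathbb{F}$ with $d_i(t_j)=\delta_{ij}$. Already for $r=1$, take $\mathbb{F}=\mathbb{Q}(x)$ and $d=x\,\tfrac{d}{dx}$; then $d(t)=1$ means $t'=1/x$, which has no solution in $\mathbb{Q}(x)$. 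What independence (over $\mathbb{F}$, which is a strictly stronger hypothesis than independence over $\mathbb{C}$ — compare $\partial_x$ and $x\partial_x$) gives is only elements $u_j$ with $\det\bigl(d_i(u_j)\bigr)\neq 0$; but then the entries $d_i(u_j)$ are field elements that are differentiated again in higher compositions, so your formula $d^{\alpha}(t^{\beta})=\beta!/(\beta-\alpha)!\,t^{\beta-\alpha}$ and the triangularity built on it are lost. The independence half is therefore not ``routine'' as written, and this is exactly the part the paper imports from \cite[Subsection 3.4]{GseKis22a} (and later strengthens to algebraic independence via Theorem~\ref{T_Poly_Ind}).

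For the spanning half, your own caveat is fatal to the argument as proposed: since $[d_i,d_j]$ need not lie in $\mathrm{span}_{\mathbb{F}}\{d_1,\ldots,d_r\}$, the ``shorter composites'' produced by straightening involve new derivations and are in general \emph{not} $\mathbb{F}$-combinations of the ordered monomials $d^{\alpha}$, so the downward induction on length has nothing to land on. Concretely, on $\mathbb{F}=\mathbb{Q}(x,y,z)$ take $d_1=\partial_x$ and $d_2=\partial_y+x\partial_z$ (linearly independent even over $\mathbb{F}$); then $[d_1,d_2]=\partial_z$, and a comparison of principal symbols (the symbol of $d_1^{a}\circ d_2^{b}$ is $\xi^{a}(\eta+x\zeta)^{b}$, and these are $\mathbb{F}$-independent in each degree) shows $\partial_z\notin\mathrm{span}_{\mathbb{F}}\{d^{\alpha}\}$, hence $d_2\circ d_1=d_1\circ d_2-\partial_z$ is not in that span either. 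So if ``differential operators constructed by $d_1,\ldots,d_r$'' were read as all words in the $d_i$, the spanning claim would simply be false; the statement has to be read so that spanning is essentially definitional (the $\mathbb{F}$-module generated by the ordered monomials, or the case where $d_1,\ldots,d_r$ form an $\mathbb{F}$-basis of the derivations of a finitely generated subfield, where the bracket stays in the span and a PBW-type reduction does go through). The operative content of the lemma — and the only way it is used later, in Corollary~\ref{cor:alg_ind} and Theorem~\ref{T_mix_fo1} — is the (linear, indeed algebraic) independence of the $d^{\alpha}$, which your sketch does not yet establish.
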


For a multi-index $\alpha= \left(\alpha_{1}, \ldots, \alpha_{r}\right)$ we denote $|\alpha|=\sum_{i=1}^n \alpha_k$.
\begin{cor} \label{cor:alg_ind}
%By Lemma \ref{L:dermom} and Remark \ref{rembas} implies that the functions $d^{\alpha}=
%  d_{1}^{\alpha_{1}} \circ \cdots \circ d_{r}^{\alpha_{r}}$ constitute a basis of the differential operators in $\mathbb{F}$. Since every $d^{\alpha}$ is additive, by Lemma \ref{L_lin_dep},
%  all elements of the system $\{d^{\alpha}(x)\}$, where $\alpha\in \cup_{r\in \mathbb{N}}\mathbb{N}^r$ are algebraically independent.

 By Theorem \ref{T_Poly_Ind}, the elements of $d^{\alpha}$ are algebraically independent, for all $\alpha\in \cup_{r\in \mathbb{N}}\mathbb{N}^r$. Let $d_1, \dots d_r$ be derivations as in Lemma \ref{L:dermom} and $\alpha_1, \dots, \alpha_n \in  \cup_{r\in \mathbb{N}}\mathbb{N}^r$.  Then equation \eqref{Eq_mixed} can be written in the
   following form
    \[
    \sum_{i=1}^n f_i(x^{p_i})(g_i(x))^{q_i}=\sum_{i=1}^n \big(\sum_{|\alpha|<k_i}d^{\alpha}(x^{p_i})\big)\big(\sum_{|\beta|<l_i}d^{\beta}(x)\big)^{q_i}=0
    \qquad
     \left(x\in \mathbb{F}\right).
    \]
    Now we fix an $\alpha\in \mathbb{N}^r$ such that $|\alpha|$ is maximal in $f_i$'s and $\beta\in \mathbb{N}^t$ is taken to be maximal in those $g_i$ where $d^{\alpha}$ appears as a summand in $f_i$. Then by algebraic independence  we can restrict to those $\alpha'\in \mathbb{N}^r$ and $\beta'\in \mathbb{N}^t$ such that $\alpha'_k\le \alpha_k$ ($k=1,\dots, r$) and $\beta'_j\le \beta_j$ ($j=1,\dots, t$). This we denote by $\alpha'\le \alpha$ and $\beta'\le \beta$, respectively. Hence
    \[
    \sum_{i=1}^n f_i(x^{p_i})(g_i(x))^{q_i}=\sum_{i=1}^n \big(\sum_{k\le |\alpha|}\hat{d}^{k}(x^{p_i})\big)\big(\sum_{l\le|\beta|}\hat{d}^{l}(x)\big)^{q_i}=0
    \qquad
     \left(x\in \mathbb{F}\right),
    \]
     where $\hat{d}$ is an arbitrary derivation (of order 1).
    In other words, we can substitute $d_1, \dots, d_r$ by $\hat{d}$ in $d^{\alpha'},  d^{\beta'}$ in each case whenever $\alpha'<\alpha$ and $\beta'<\beta$.
\end{cor}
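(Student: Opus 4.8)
The plan is to establish the three ingredients of the statement in turn: the algebraic independence of the composition monomials $d^{\alpha}$, the rewriting of \eqref{Eq_mixed} in terms of them, and the reduction to the leading multi-indices $\alpha,\beta$ together with the collapse of the lower-order terms onto a single derivation.

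\emph{Algebraic independence.} Working over a finitely generated subfield of $\mathbb{F}$, which is harmless exactly as in the proof of Theorem \ref{thm_mixed}, Theorem \ref{thm_KisLac} gives that each $d^{\alpha}$ is a derivation of order $|\alpha|$, so that $d^{\alpha}(x)=x\cdot B_{\alpha}(a_{1}(x),\dots,a_{l}(x))$ on $\mathbb{F}^{\times}$, where the $a_{i}\colon\mathbb{F}^{\times}\to\mathbb{C}$ are linearly independent logarithmic functions and $B_{\alpha}$ is a classical polynomial; the multivariate Bell polynomial / generalized moment sequence description behind Lemma \ref{L:dermom} (cf.\ \cite[Subsection 3.4]{GseKis22a}) moreover shows that the polynomials $B_{\alpha}$ are algebraically independent. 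Given a relation $Q(\dots,d^{\alpha},\dots)\equiv 0$ for a complex polynomial $Q$, I would collect the monomials of $Q$ according to the power of the identity $j$ they carry, so that the relation becomes $\sum_{s}P_{s}(a_{1},\dots,a_{l})\,j^{s}\equiv 0$; Theorem \ref{T_Poly_Ind}, applied with the single exponential $j$ (which is additive on $\mathbb{F}$) and the linearly independent $a_{i}$, forces every $P_{s}$ to vanish identically, and the algebraic independence of the $B_{\alpha}$ then forces every coefficient of $Q$ to be $0$.

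\emph{Rewriting and leading-term reduction.} By Remark \ref{rem_vegy_pol} we may assume $f_{i}=D_{i}$ and $g_{i}=\widetilde{D}_{i}$ are differential operators; fixing finitely many linearly independent derivations $d_{1},\dots,d_{r}$ in terms of which all of them can be written and applying Lemma \ref{L:dermom} yields $D_{i}=\sum_{|\gamma|<k_{i}}c_{\gamma}^{(i)}d^{\gamma}$ and $\widetilde{D}_{i}=\sum_{|\delta|<l_{i}}\widetilde{c}_{\delta}^{(i)}d^{\delta}$, which, substituted into \eqref{Eq_mixed}, gives the displayed form. Next I would pick $\alpha$ with $|\alpha|$ maximal among the $\gamma$ occurring in some $D_{i}$, and then $\beta$ with $|\beta|$ maximal among the $\delta$ occurring in $\widetilde{D}_{i}$ for those $i$ with $c_{\alpha}^{(i)}\neq 0$; expand the left-hand side into monomials in the functions $d^{\gamma}(x)$ — each $d^{\gamma}(x^{p})$ being, by the Leibniz rule applied to $x^{p}=x\cdots x$, a binomial-coefficient-weighted sum of such monomials of total composition order $|\gamma|$ — and order these monomials so that one is heavier the more copies of $d^{\alpha}$ it carries in an $f$-slot, then the more copies of $d^{\beta}$ in the companion $g$-slot, then by the orders of its remaining factors. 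The heaviest monomials then occur in only one of the summands $f_{i}(x^{p_{i}})g_{i}(x)^{q_{i}}$, and, since distinct monomials in the $d^{\gamma}$ are linearly independent by the previous step, their coefficients vanish separately; reading these relations off and peeling downward shows that together they amount precisely to the equation obtained by deleting from each $f_{i}$, $g_{i}$ the terms $d^{\gamma}$ with $\gamma\not\le\alpha$, resp.\ $\gamma\not\le\beta$. Finally, since this identity holds formally in the algebraically independent $d^{\gamma}$, one may evaluate the symbols $d^{\gamma}$ with $\gamma<\alpha$ (resp.\ $\gamma<\beta$) at $\widehat{d}^{\,|\gamma|}$ for an arbitrary single derivation $\widehat{d}$, leaving the top symbols $d^{\alpha},d^{\beta}$ untouched, and obtain the last displayed equation.

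\emph{Main obstacle.} The delicate point is the combinatorial bookkeeping in the leading-term reduction: one must set up the term order so that, once every $d^{\gamma}(x^{p_{i}})$ is expanded, the heaviest monomial carried by the $i$-th summand is uniquely tied to that $i$ and to the pair $(\alpha,\beta)$, and cannot be produced — with the wrong coefficient — by another summand, by a smaller $\gamma$, or by a "clumped" term (one with a factor $d^{\gamma'}$, $|\gamma'|\ge 2$) inside some $d^{\gamma}(x^{p})$ coming from a $\gamma$ incomparable to $\alpha$. This is exactly where conditions C(i)--C(iii), the relation $p_{i}+q_{i}=N$, and the maximality of $|\alpha|$ are used; once the term order is chosen correctly, the algebraic-independence input and the passage to a single derivation are then routine.
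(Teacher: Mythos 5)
Your route coincides with the paper's intended one: algebraic independence of the composition monomials $d^{\alpha}$ deduced from Theorem \ref{T_Poly_Ind} (after writing $d^{\alpha}(x)=x\cdot B_{\alpha}(a_{1}(x),\dots)$ via Theorem \ref{thm_KisLac} and the basis of Lemma \ref{L:dermom}), expansion of the solutions in that basis, and a reduction permitting the lower-order symbols to be evaluated at powers of a single derivation $\hat d$. The first step is fine; the algebraic independence of the $B_{\alpha}$, which you only assert, does hold (triangular structure of the Bell polynomials) and is in the spirit of \cite[Subsection 3.4]{GseKis22a}.

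The genuine gap is in your leading-term reduction. The claim that, with your term order, \emph{the heaviest monomials occur in only one of the summands} $f_{i}(x^{p_{i}})g_{i}(x)^{q_{i}}$, so that \emph{their coefficients vanish separately}, is false in general: the monomial $d^{\alpha}(x)\,d^{\beta}(x)^{q_{i}}\,x^{p_{i}-1}$ produced by the $i$-th summand is also produced by a summand with $p_{j}=p_{i}-1$, $q_{j}=q_{i}+1$ (take the linear term of $g_{j}$ in one $g$-slot), and then only the \emph{sum} of the two contributions must vanish. The paper's own example after Corollary \ref{cor9} ($f_{1}=g_{1}=g_{2}=d$, $f_{2}=-p_{1}x$) realizes exactly this cancellation, and alternative (B) of Theorem \ref{T_mix_fo1} exists precisely because such cross-summand cancellations of leading monomials occur; note also that your order is defined on monomials-with-provenance, while linear independence only sees the monomial itself. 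Consequently the ``peeling downward'' assertion — that the surviving relations amount precisely to the equation with the terms $\gamma\not\le\alpha$, $\delta\not\le\beta$ deleted — is exactly the content that needs proof and is not supplied. For the substitution itself no term order is needed: once algebraic independence makes the expanded identity the zero polynomial in $x$ and the symbols $d^{\gamma}(x)$, one may evaluate it with $d_{1}=\dots=d_{r}=\hat d$, since the expansions of Proposition \ref{prop1} are formal consequences of the Leibniz rule (the multinomial coefficients collapse by a Vandermonde-type identity). The delicate point — which the restriction $\alpha'\le\alpha$, $\beta'\le\beta$ is designed to control and which your argument does not address — is that this collapse merges coefficients of distinct multi-indices of equal length and could kill leading terms; the restriction guarantees that $\alpha$ (resp.\ $\beta$) is the unique index of its length that survives, so the maximal orders are preserved, which is how the corollary is used in Theorem \ref{T_mix_fo1}.
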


Our next aim is to understand the arithmetic of composition of derivations of the form $\underbrace{d\circ \cdots \circ d}_\text{$k$ times}(x)$, where $d$ is a derivation (of order 1), $k\in \mathbb{N}$, as they are building blocks of differential operators.
Lemma \ref{L:dermom}, together with \cite[Proposition 1]{FecGseSze21}, implies the following statement.  % $\alpha\in \cup_{r\in \mathbb{N}}\mathbb{N}^r$

\begin{prop}\label{prop1}
 Let $\mathbb{F}\subset \mathbb{C}$ be a field and $d\colon \mathbb{F}\to \mathbb{C}$ a derivation. For all positive integer $k$ we define the function $d^{k}$ on
 $\mathbb{F}$ by
 \[
  d^{k}(x)=
  \underbrace{d\circ \cdots \circ d}_\text{$k$ times}(x)
  \qquad
  \left(x\in \mathbb{F}\right).
 \]
Then for all positive integer $p$ we have
\[
d^{k}(x^{p})=
 \sum_{\substack{l_{1}, \ldots, l_{p}\geq 0\\ l_{1}+\cdots+l_{p}=k}}
 \binom{k}{l_{1}, \ldots, l_{p}}\cdot d^{l_{1}}(x)\cdots d^{l_{p}}(x)
 \qquad
 \left(x_{1}, \ldots, x_{p}\in \mathbb{F}\right),
\]
%\[
% d^{k}(x_{1}\cdots x_{p})=
% \sum_{\substack{l_{1}, \ldots, l_{p}\geq 0\\ l_{1}+\cdots+l_{p}=k}}
% \binom{k}{l_{1}, \ldots, l_{p}} \prod_{t=1}^{p} d^{l_{t}}(x_{t})
% \qquad
% \left(x_{1}, \ldots, x_{p}\in \mathbb{F}\right),
%\]
where the conventions $d^{0}= \mathrm{id}$ and $\displaystyle\binom{k}{l_{1}, \ldots, l_{p}}= \dfrac{k!}{l_{1}! \cdots l_{p}!}$ are  adopted. %Especially, for all positive integer $p$, we have

Reordering the previous expression we can get the following
\[
 d^{k}(x^{p})= \sum_{\substack{ j_{1}+\cdots+j_{s}=p'< p\\
 j_1+2j_2+\dots+sj_s=k}}  \binom{k}{\underbrace{1,\dots, 1}_{j_{1}}, \ldots, \underbrace{s,\dots, s}_{j_{s}}}\cdot  \prod_{t=1}^s \frac{1}{(j_t!)}\cdot  \binom{p}{\underbrace{1,\dots,1}_{p'}}\cdot (d(x))^{j_1}\cdots (d^{s}(x))^{j_s}\cdot x^{p-p'},
 \qquad
 \left(x\in \mathbb{F}\right)
 \]

where $j_1,\dots, j_s$ denotes the number of $d(x),\dots, d^s(x)$ in a given composition of $d^k(x^p)$.
\end{prop}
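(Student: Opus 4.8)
The strategy is to obtain the first displayed formula from the generalized Leibniz rule for the $k$-th iterate of a derivation --- which is \cite[Proposition 1]{FecGseSze21} --- and then to derive the second formula by regrouping the summands of the first according to the multiset of exponents appearing in each product.

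For the first identity, the one ingredient needed is the higher Leibniz rule
\[
 d^{k}(uv)=\sum_{j=0}^{k}\binom{k}{j}\, d^{j}(u)\, d^{k-j}(v)
 \qquad \left(u, v\in \mathbb{F}\right),
\]
which is proved by a short induction on $k$ (the base case $k=1$ is $d(uv)=d(u)v+ud(v)$, and the step uses $\binom{k}{j}+\binom{k}{j-1}=\binom{k+1}{j}$). Granting this, I would prove
\[
 d^{k}(x^{p})=\sum_{\substack{l_{1}, \ldots, l_{p}\geq 0\\ l_{1}+\cdots+l_{p}=k}}\binom{k}{l_{1},\ldots,l_{p}}\, d^{l_{1}}(x)\cdots d^{l_{p}}(x)
\]
by induction on $p$: for $p=1$ the sum collapses to the single term $\binom{k}{k}d^{k}(x)$, and for the inductive step one writes $x^{p}=x\cdot x^{p-1}$, applies the higher Leibniz rule with $u=x$ and $v=x^{p-1}$, expands each $d^{k-j}(x^{p-1})$ by the inductive hypothesis, sets $l_{1}=j$, and contracts the resulting double sum via the identity $\binom{k}{l_{1}}\binom{k-l_{1}}{l_{2},\ldots,l_{p}}=\binom{k}{l_{1},\ldots,l_{p}}$. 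Throughout one uses the convention $d^{0}=\mathrm{id}$, so an exponent $l_{i}=0$ contributes a factor $x$.

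For the second identity I would sort the summands $d^{l_{1}}(x)\cdots d^{l_{p}}(x)$ by their ``type'': for $t=0,1,\ldots,s$ (with $s$ the largest exponent that occurs) let $j_{t}$ be the number of indices $i$ with $l_{i}=t$, so that $j_{0}+j_{1}+\cdots+j_{s}=p$ and $1\cdot j_{1}+2\cdot j_{2}+\cdots+s\cdot j_{s}=l_{1}+\cdots+l_{p}=k$. Putting $p'=j_{1}+\cdots+j_{s}=p-j_{0}$, every summand of a fixed type equals $x^{p-p'}(d(x))^{j_{1}}\cdots(d^{s}(x))^{j_{s}}$. The number of exponent vectors realizing a given type is the multinomial $\tfrac{p!}{(p-p')!\,j_{1}!\cdots j_{s}!}$, which is exactly $\binom{p}{\underbrace{1,\ldots,1}_{p'}}\prod_{t=1}^{s}\tfrac1{j_{t}!}$ in the notation of the statement, and each of them carries the multinomial coefficient $\binom{k}{l_{1},\ldots,l_{p}}=\tfrac{k!}{(1!)^{j_{1}}(2!)^{j_{2}}\cdots(s!)^{j_{s}}}=\binom{k}{\underbrace{1,\ldots,1}_{j_{1}},\ldots,\underbrace{s,\ldots,s}_{j_{s}}}$. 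Summing over all admissible types, i.e.\ over all $(j_{1},\ldots,j_{s})$ with $j_{1}+2j_{2}+\cdots+sj_{s}=k$, yields precisely the asserted expression.

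Every step is a standard induction or a routine count, so I do not anticipate a genuine difficulty; the only points that require attention are the bookkeeping that matches ``how many ways the prescribed pattern of exponents can be distributed over the $p$ factors'' and ``the value of $\binom{k}{l_{1},\ldots,l_{p}}$ on that pattern'' with the coefficients written in the statement, and the boundary case $j_{0}=0$ --- which occurs exactly when every $l_{i}\geq 1$ (forcing $p\leq k$) --- where $x^{p-p'}=1$ and $\binom{p}{\underbrace{1,\ldots,1}_{p}}=p!$.
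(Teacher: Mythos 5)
Your proof is correct, but it proceeds differently from the paper: the paper gives no self-contained argument at all, simply deriving the proposition from Lemma \ref{L:dermom} together with the cited result \cite[Proposition 1]{FecGseSze21} on moment sequences and Bell polynomials, whereas you prove the multinomial Leibniz formula from scratch (induction on $k$ for the binomial Leibniz rule, then induction on $p$ with the contraction $\binom{k}{l_1}\binom{k-l_1}{l_2,\ldots,l_p}=\binom{k}{l_1,\ldots,l_p}$) and then obtain the second display by an explicit count of exponent vectors of a given type. Your route buys self-containedness and makes the combinatorial bookkeeping transparent --- in particular the identification of the number of vectors of a fixed type with $\binom{p}{\underbrace{1,\ldots,1}_{p'}}\prod_{t=1}^{s}\frac{1}{j_t!}$ and of $\binom{k}{l_1,\ldots,l_p}$ with $\frac{k!}{(1!)^{j_1}\cdots(s!)^{j_s}}$ is exactly the matching one needs and is only implicit in the citation-based treatment. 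One further point in your favour: your remark about the boundary case $j_0=0$ (all $l_i\geq 1$, possible precisely when $p\leq k$) shows that the constraint $p'<p$ in the displayed reordered formula should really read $p'\leq p$ in general; as stated with strict inequality it is accurate only when $p>k$, so your more careful bookkeeping actually catches a small inaccuracy in the statement rather than introducing one.
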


\begin{thm}\label{T_mix_fo1}
 Let $n$ be a positive integer, $\mathbb{F}\subset \mathbb{C}$ be a field and
 $p_{1}, \ldots, p_{n}, q_{1}, \ldots, q_{n}$ be fixed positive integers fulfilling conditions C(i)--C(iii).
 Assume that the additive functions $f_{1}, \ldots, f_{n}, g_{1}, \ldots, g_{n}\colon \mathbb{F}\to \mathbb{C}$ defined by
 \begin{equation}\label{eq_mixred2}
  f_{i}(x)= D_i(x)
  \qquad
  \text{and}
  \qquad
  g_{i}(x)= \widetilde{D}_i(x)
  \qquad
  \left(x\in \mathbb{F}^{\times}\right)
 \end{equation}
for each $i=1, \ldots, n$, satisfy functional equation \eqref{Eq_mixed} on
$\mathbb{F}$, where for all index $i=1, \ldots, n$, the mappings $D_i, \widetilde{D}_i$ are higher order derivations on $\mathbb{F}$.
Then one of the following two alternatives hold:
\begin{enumerate}
    \item[(A)] there exists $i\in \{1, \dots, n\}$ such that $g_i(x)=c_i\cdot x$ and $f_i$ contains, as a summand, a derivation of order $K$, where $K$ is the maximum order in each $D_i, \widetilde{D}_i$.
    \item[(B)] $f_i$ and $g_i$ is of order at most 1 for all $i\in \{1, \dots, n\}$ and there are $i_1, i_2\in \{1, \dots, n\}, i_1\ne i_2$ such that $q_{i_2}=q_{i_1}+1$ ($p_{i_2}=p_{i_1}-1$) and $f_{i_1},g_{i_1}, f_{i_2}, g_{i_2}$ is of the form
    \begin{equation}\label{eq1rendfg}
    \begin{split}
    f_{i_1}=\la_{i_1,1} d(x)+\la_{i_1,0}x,& ~ ~ g_{i_1}(x)=\tla_{i_1,1} d(x)+\tla_{i_1,0}x,\\
    f_{i_2}(x)=\la_{i_2,0}x,& ~ ~ g_{i_2}(x)=\tla_{i_2,1} d(x)+\tla_{i_2,0}x,
    \end{split}
    \end{equation}
    where $\la_{i_1,1}, \tla_{i_1,1}, \la_{i_2,0}, \tla_{i_2,1}$ are nonzero complex numbers satisfying
    \begin{equation}\label{eq1rendeq}
    p_{i_1}\cdot\la_{i_1,1}\cdot(\tla_{i_1,1})^{q_{i_1}}+\la_{i_2,0}\cdot (\tla_{i_2,1})^{q_1+1}=0.
    \end{equation}
\end{enumerate}
\end{thm}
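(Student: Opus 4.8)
The plan is to build on the reduction already carried out in Corollary~\ref{cor:alg_ind}. After restricting to a finitely generated subfield of $\mathbb{F}$ and combining Lemma~\ref{L:dermom} with Corollary~\ref{cor:alg_ind}, I may assume that all the operators in \eqref{eq_mixred2} are built from a \emph{single} derivation $d$, say
\[
 f_{i}(x)=\sum_{k=0}^{K_{i}}\la_{i,k}\,d^{k}(x),
 \qquad
 g_{i}(x)=\sum_{l=0}^{\widetilde{K}_{i}}\tla_{i,l}\,d^{l}(x)
 \qquad
 (i=1,\dots,n),
\]
where $d^{0}=\mathrm{id}$, and $\la_{i,K_{i}}\neq 0$, $\tla_{i,\widetilde{K}_{i}}\neq 0$ whenever the corresponding order is positive. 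Substituting this into \eqref{Eq_mixed} and expanding each $f_{i}(x^{p_{i}})$ by the second formula of Proposition~\ref{prop1}, the left-hand side becomes a polynomial in the functions $x,d(x),d^{2}(x),\dots,d^{K}(x)$, with $K=\max_{i}\max(K_{i},\widetilde{K}_{i})$. By Theorem~\ref{T_Poly_Ind} this polynomial vanishes identically, so comparing the coefficient of every monomial $x^{a_{0}}d(x)^{a_{1}}\cdots d^{K}(x)^{a_{K}}$ produces a system of polynomial relations among the $\la_{i,k}$ and the $\tla_{i,l}$, which is what I will analyse.

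The crucial device is to grade the monomials by their \emph{order} $\sum_{l\geq 1}l\,a_{l}$. By Proposition~\ref{prop1} the function $d^{k}(x^{p})$ is homogeneous of order $k$; hence a monomial coming from the $i$-th summand has order at most $K_{i}+q_{i}\widetilde{K}_{i}$, and, decisively, a length-$K$ composition $d^{K}(x)$ can be produced only by a summand of order exactly $K$. Suppose first $K\geq 2$. If the maximal order $K$ is attained only among the $g_{i}$ (all $K_{i}<K$), then, taking $i_{\ast}$ to be the index with $q_{i_{\ast}}$ largest among those with $\widetilde{K}_{i_{\ast}}=K$, the monomials $\bigl(d^{K}(x)\bigr)^{q_{i_{\ast}}}d^{k}(x)\,x^{p_{i_{\ast}}-1}$ with $0\leq k\leq K_{i_{\ast}}$ each receive a contribution from the single summand $i_{\ast}$, which forces $\la_{i_{\ast},k}=0$ for all $k$, i.e.\ $f_{i_{\ast}}\equiv 0$; after deleting this term and iterating, I may assume $K$ is attained by some $f_{i_{0}}$. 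A similar, but more delicate, comparison of the maximal-order monomials then shows that its partner must be linear, $g_{i_{0}}(x)=\tla_{i_{0},0}x=c_{i_{0}}x$, which is alternative (A). This monomial bookkeeping — singling out, among the possibly many monomials of maximal order, one that is fed by a single summand — is the technical heart and the main obstacle of the proof; the distinctness hypotheses C(i)--C(iii) are precisely what prevent the various summands from cancelling one another in this comparison.

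Now suppose $K\leq 1$, so that $f_{i}(x)=\la_{i,1}d(x)+\la_{i,0}x$ and $g_{i}(x)=\tla_{i,1}d(x)+\tla_{i,0}x$ for every $i$. Since $d$ has first order, $f_{i}(x^{p_{i}})=p_{i}\la_{i,1}d(x)\,x^{p_{i}-1}+\la_{i,0}x^{p_{i}}$ and $g_{i}(x)^{q_{i}}=\sum_{j=0}^{q_{i}}\binom{q_{i}}{j}\tla_{i,1}^{j}\tla_{i,0}^{q_{i}-j}d(x)^{j}x^{q_{i}-j}$, so equating the coefficient of $d(x)^{j}x^{N-j}$ for each $j$ gives
\[
 \sum_{i=1}^{n}\left(p_{i}\la_{i,1}\binom{q_{i}}{j-1}\tla_{i,1}^{\,j-1}\tla_{i,0}^{\,q_{i}-j+1}
 +\la_{i,0}\binom{q_{i}}{j}\tla_{i,1}^{\,j}\tla_{i,0}^{\,q_{i}-j}\right)=0 .
\]
Because $q_{i}=N-p_{i}$ and the $p_{i}$ are distinct, there is a unique index $i_{2}$ with $q_{i_{2}}$ maximal; taking $j=q_{i_{2}}+1$ annihilates every summand except the first one with $i=i_{2}$, whence $p_{i_{2}}\la_{i_{2},1}\tla_{i_{2},1}^{\,q_{i_{2}}}=0$. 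Thus either $\tla_{i_{2},1}=0$, so that $g_{i_{2}}(x)=c_{i_{2}}x$ while $f_{i_{2}}$ still contains the order-$1$ derivation $d$ (alternative (A) with $K=1$), or $\la_{i_{2},1}=0$, i.e.\ $f_{i_{2}}(x)=\la_{i_{2},0}x$; in the latter case the choice $j=q_{i_{2}}$, in which — by the distinctness of the $q_{i}$'s — only the index $i_{1}$ with $q_{i_{1}}=q_{i_{2}}-1$ (if present) and the index $i_{2}$ itself survive, yields
\[
 p_{i_{1}}\la_{i_{1},1}\tla_{i_{1},1}^{\,q_{i_{1}}}+\la_{i_{2},0}\tla_{i_{2},1}^{\,q_{i_{1}}+1}=0 .
\]

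If the $i_{1}$-term is absent, or if $\la_{i_{1},1}\tla_{i_{1},1}=0$, this last relation forces $\la_{i_{2},0}\tla_{i_{2},1}=0$; since we may assume $f_{i_{2}}\not\equiv 0$ this gives $\tla_{i_{2},1}=0$, so $g_{i_{2}}(x)=c_{i_{2}}x$ and the summand $i_{2}$ feeds only the coefficient of $x^{N}$ — it can be removed and an induction on $n$ applied. Otherwise $\la_{i_{1},1},\tla_{i_{1},1},\la_{i_{2},0},\tla_{i_{2},1}$ are all nonzero, $p_{i_{2}}=p_{i_{1}}-1$, the displayed identity is exactly \eqref{eq1rendeq}, and, since all the functions already have order at most $1$, the collected sign pattern is precisely the configuration \eqref{eq1rendfg}: alternative (B). Condition C(iii) is what excludes further coincidences among the monomials in these last steps, while the extreme situations — the base case $n=1$, where $\mathbb{C}[x,d(x),\dots]$ being an integral domain forces $f_{1}\equiv 0$ or $g_{1}\equiv 0$, and the fully linear case $K=0$ — are immediate.
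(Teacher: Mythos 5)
Your reduction to a single derivation $d$, your grading of monomials by the order $\sum_{l\ge 1} l\,a_l$, and your elimination of the case in which the maximal order is attained only among the $g_i$'s (via the monomials $(d^{K}(x))^{q_{i_*}}d^{k}(x)x^{p_{i_*}-1}$) are sound, and your treatment of the case $K\le 1$ essentially reproduces, by comparing coefficients of $d(x)^{j}x^{N-j}$ at $j=q_{i_2}+1$ and $j=q_{i_2}$, the configuration \eqref{eq1rendfg}--\eqref{eq1rendeq} of alternative (B). However, there is a genuine gap exactly at the step you yourself call the technical heart: for $K\ge 2$ with the maximal order attained by some $f_{i_0}$, you only assert that ``a similar, but more delicate, comparison of the maximal-order monomials shows that its partner must be linear.'' No such comparison is given, and it is not routine: unlike your first reduction, the relevant maximal-order monomials (e.g.\ $d^{K_{i_0}}(x)\bigl(d^{\widetilde{K}_{i_0}}(x)\bigr)^{q_{i_0}}x^{p_{i_0}-1}$) can in general be fed simultaneously by several summands, since a factor $d^{k}(x)$ may arise either from the expansion of some $f_j(x^{p_j})$ or from a power $g_j(x)^{q_j}$, and the coincidences $k_1=l_i$, $q_j=2$, $l_j=1$, etc.\ must be excluded one by one. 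This is precisely what the paper's proof spends most of its length on: it grades instead by $K=\max_i(k_i+q_il_i)$, examines the coefficient of $d^{k_1}(x)(d^{l_1}(x))^{q_1}$, and runs a multi-level case analysis (including auxiliary monomials such as $d(x)d^{k_1-1}(x)(d^{k_1}(x))^{q_1}$ and $d^{l_1q_1-1}(x)d(x)d^{k_1}(x)$) to rule out every cross-cancellation except the two alternatives. Without an argument of this kind your proof of alternative (A) is missing, not merely abbreviated.

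Two smaller points. First, the statement you aim to prove in that missing step, namely that \emph{the} chosen index $i_0$ of maximal order has a linear partner, is stronger than the theorem (which only asserts the existence of some index with $g_i=c_i x$ and $f_i$ containing an order-$K$ summand); as stated for an arbitrary maximal index it need not be what the bookkeeping yields, so the choice of $i_0$ itself is part of the work. Second, in the $K\le 1$ analysis, after concluding $\tla_{i_2,1}=0$ you ``remove the summand $i_2$ and apply induction on $n$,'' but the removed summand contributes a nonzero multiple of $x^{N}$, so the remaining functions satisfy an inhomogeneous equation not literally covered by the induction hypothesis; this is patchable (the $d$-containing coefficients are unaffected), but it should be said, as should the sub-case $\la_{i_2,1}=\tla_{i_2,1}=0$, in which $i_2$ does not witness alternative (A) and the argument must continue with the other indices.
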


\begin{proof}

 %Although the exponentials $m_{1}, \ldots, m_{n}$ are not necessarily different, there exists a positive $k$ and \emph{different} exponentials $\mu_{1}, \ldots, \mu_{k}$ such that
 %$\left\{m_{1}, \ldots, m_{n} \right\}= \left\{ \mu_{1}, \ldots, \mu_{k}\right\}$. Expanding the right hand side of the above expression with respect to the different exponentials $\mu_{1}, \ldots, \mu_{k}$, we get equations of the following type
 %\[
 %\sum_{i\in J \subset \left\{ 1, \ldots, n\right\}} P_{i}(x^{p_{i}})(x)Q_{i}^{q_{i}}(x)=0
 %\qquad
 %\left(x\in \mathbb{F}^{\times}\right).
 %\]
 %Observe that without the loss of generality $J= \left\{ 1, \ldots, n\right\}$ can be assumed, that is,
 %\[
 %\sum_{i=1}^{n} P_{i}(x^{p_{i}})(x)Q_{i}^{q_{i}}(x)=0
 %\qquad
 %\left(x\in \mathbb{F}^{\times}\right),
 %\]
 %or if we multiply this by $x^{N}$,
 %\[
 %0=
 %\sum_{i=1}^{n} P_{i}(x^{p_{i}})(x)Q_{i}^{q_{i}}(x)x^{N}=
 %\sum_{i=1}^{n} P_{i}(x^{p_{i}})x^{p_{i}}(x)\left(Q_{i}(x)x^{q_{i}}\right)
 %\qquad
 %\left(x\in \mathbb{F}^{\times}\right),
 %\]
 %which in view of Theorem \ref{thm_KisLac} enables us to work with higher order derivations of $\mathbb{F}$, instead of generalized polynomials of $\mathbb{F}^{\times}$. Accordingly, we may assume that the additive functions $f_{1}, \ldots, f_{n}$ and $g_{1}, \ldots, g_{n}$ are higher order derivations.
 %Let $G\subset \mathbb{F}^{\times}$ be a finitely generated subgroup. On this subgroup the above-mentioned
 %additive functions are differential operators, that is, they can be represented as
 Substituting the form \eqref{eq_mix_alap} to equation \eqref{Eq_mixed}, we arrive to
 \[
 0= \sum_{i=1}^{n}f_{i}(x^{p_{i}})g_{i}^{q_{i}}(x)
 =
 \sum_{i=1}^{n}P_{i}(x^{p_{i}})x^{p_{i}}\cdot Q_{i}^{q_{i}}(x)x^{q_{i}}=
 %\big(\sum_{i=1}^{n} P_{i}(x)(x^{p_{i}})Q_{i}^{q_{i}}(x)\big)x^{N}=
 \sum_{i=1}^{n} D_{i}(x^{p_{i}})(x)(\widetilde{D}_{i}(x))^{q_{i}}
 \qquad
 \left(x\in \mathbb{F}^{\times}\right).
 \]

 For simplicity, from now on we assume that $\mathbb{F}$ is finitely generated. Then the corresponding functions can be represented as
 \[
 f_i(x)=D_{i}(x)= \sum_{|\alpha|<k_i}d^{\alpha}(x)
 \qquad
 \text{and}
 \qquad
 g_i(x)=\widetilde{D}_{i}(x)= \sum_{|\beta|<l_i}d^{\beta}(x)
 \qquad
\left(x\in \mathbb{F}^{\times}\right),
\]
where $\alpha$ and $\beta$ are running multi-indices and $k_i$ and $l_i$ are some natural numbers depending on $f_i$ and $g_i$, respectively. By Corollary \ref{cor:alg_ind}, we can take only derivation $d$ in each composition of each $d^{\alpha}$ and $d^{\beta}$ so that the corresponding $f_i$'s and $g_i$'s still satisfy \eqref{eq_mix_alap}. We can represent these functions as
 \[
 f_{i}(x)=D_i(x)= \sum_{j=0}^{k_i}\lambda_{i, j}d^{j}(x)
 \qquad
 \text{and}
 \qquad
 g_{i}(x)=\widetilde{D}_i(x)= \sum_{j=0}^{l_i}\widetilde{\lambda}_{i, j}d^{j}(x)
 \qquad
 \left(x\in \mathbb{F}^{\times}\right)
\]
with an appropriate derivation $d\colon \mathbb{F}\to \mathbb{C}$ and complex constants $\lambda_{i, j}$ ($i=1, \ldots, n, j=0, \ldots, k_i$) $\widetilde{\lambda}_{i, j}$ ($i=1, \ldots, n, j=0, \ldots, l_i$), where $k_i$ and $l_i$ denote the the highest order term of those derivations that appear in $f_i$ and $g_i$, respectively. This means that $\lambda_{i, k_i}, \widetilde{\lambda}_{i, l_i}$ are nonzero for all $i=1,\dots,n$.

If we write these representations into \eqref{Eq_mixed}, we especially get that
\[
 \sum_{i=1}^{n}\left(\sum_{j=0}^{k_i}\lambda_{i, j}d^{j}(x^{p_{i}}) \right)
 \cdot \left(\sum_{j=0}^{l_i}\widetilde{\lambda}_{i, j}d^{j}(x)\right)^{q_{i}}=0
 \qquad
  \left(x\in \mathbb{F}^{\times}\right).
\]
By introducing the following quantities
\[
 S(p_{i}, k_i-1)= \sum_{j=0}^{k_i-1}\lambda_{i, j}d^{j}(x^{p_{i}})
 \qquad
 \text{and}
 \qquad
 T(q_{i}, l_i-1)= \left(\sum_{j=0}^{l_i}\widetilde{\lambda}_{i, j}d^{j}(x)\right)^{q_{i}}- (\widetilde{\lambda}_{i, l_i}d^{l_i}(x))^{q_i}
 \qquad
 \left(x\in \mathbb{F}^{\times}\right).
\]

Dividing the above sum to smaller ones, we get
\begin{multline*}
 \sum_{i=1}^{n}\left(\sum_{j=0}^{k_i}\lambda_{i, j}d^{j}(x^{p_{i}}) \right)
 \cdot \left(\sum_{j=0}^{l_i}\widetilde{\lambda}_{i, j}d^{j}(x)\right)^{q_{i}}
 \\
 =
 \sum_{i=1}^{n}\left(\lambda_{i, k_i}d^{k_i}(x^{p_{i}})+S(p_i, k_i-1)  \right)
 \cdot \left( (\lambda_{i, l_i}d^{l_i}(x))^{q_i}+T(q_i,l_i-1)\right)
  %\\
  %\sum_{i=1}^{n}
  %\left[S({p_i}, k_i-1)T(q_{i}, l_i-1)+(\widetilde{\lambda}_{i, l_i}d^{l_i}(x))^{q_{i}}S(p_{i}, k_i-1)+ \lambda_{i, k_i}d^{k_i}x^{p_{i}})T(q_{i}, l_i-1)+\lambda_{i, k_i}(\widetilde{\lambda}_{i, l_i})^{q_i}d^{k_i}(x^{p_{i}})(d^{l_i}(x))^{q_{i}}\right]=0
 \qquad
\left(x\in \mathbb{F}^{\times}\right).
\end{multline*}
Let $K=\max_i \left\{k_i+l_i\cdot q_i\right\}$ and for simplicity let us assume that this $K$ realized for indices $i\in\{1,\dots,m\}$ for some $m\le n$. Suppose that $k_1$ maximal. Now we assume that $l_1\ne 0$. Otherwise we immediately get the result, that the order of $f_1$ is maximal and $g_1=c_1\cdot x$.    %is the largest among $k_i$ ($i\in\{1,\dots,m\}$) and among them $l_1$ is maximal (i.e. $q_1$ is minimal).

Now, we investigate the coefficient of $d^{k_1}(x)(d^{l_1}(x))^{q_1}$ in the expansion. As $K$ is maximal this can be taken only from the product of the first terms of the previous expression, i.e., from
\begin{equation}\label{eq_mixer}\sum_{i=1}^m \lambda_{i, k_i}(\widetilde{\lambda}_{i, l_i})^{q_i}d^{k_i}(x^{p_{i}})(d^{l_i}(x))^{q_{i}}.
\end{equation}

By Proposition  \ref{prop1}, we have that
\begin{multline*}
d^{k_i}(x^{p_{i}})= \sum_{\substack{ j_{1}+\cdots+j_{s}=p'< p_i\\
 j_1+2j_2+\dots+sj_s=k_i}}  \binom{k_i}{\underbrace{1,\dots, 1}_{j_{1}}, \ldots, \underbrace{s,\dots, s}_{j_{s}}}\cdot  \prod_{t=1}^s \frac{1}{(j_t!)}\cdot  \binom{p_i}{\underbrace{1,\dots,1}_{p'}}\cdot (d(x))^{j_1}\cdots (d^{s}(x))^{j_s}\cdot x^{p_i-p'}
 %\sum_{\substack{l_{1}, \ldots, l_{p_{i}}\geq 0\\ l_{1}+\cdots+l_{p_{i}}=k}}
  %\binom{k}{l_{1}, \ldots, l_{p_{i}}}d^{l_{1}}(x) \cdots d^{l_{p_{i}}}(x)
  \\
  =\begin{cases} \left(p_i\cdot d^{k_i}(x)\cdot x^{p_i-1}+ R(p_i,k_i)\right),& \text{ if } k_i\ge 1\\
  x^{p_i}, & \text{ if } k_i=0
  \end{cases}   \qquad
\left(x\in G, i=1, \ldots, n\right),
\end{multline*}
where each term in $R(p_i, k_i)$ contains the product of at least two derivations of order less than $k_i$.

The term $d^{k_1}(x)(d^{l_1}(x))^{q_1}$ is automatically appears in the expansion of $d^{k_1}(x^{p_i})(d^{l_1}(x))^{q_1}$. As it have to vanish in the sum we should observe how we can get terms of the form  $d^{k_1}(x)(d^{l_1}(x))^{q_1}$ from the expansion of $d^{k_i}(x^{p_i})(d^{l_i}(x))^{q_i}$ for some $i\in \{2, \dots, m\}$.

We have to distinguish several cases and subcases.
\begin{enumerate}[{Case} 1.]
\item  $d^{k_1}(x)$ stems from the expansion of $d^{k_i}(x^{p_i})$. Then, by the maximality of $k_1$, we obtain $k_i=k_1$. As $k_1+l_1q_1=k_i+l_iq_i$ and $q_i\ne q_1$ we have that $l_1\ne l_i$ and hence we cannot get $(d^{l_1}(x))^{q_1}$ from $(d^{l_i}(x))^{q_i}$, which is a contradiction.

\item  $d^{k_1}(x)$ stems from $(d^{l_i}(x))^{q_i}$. Then $k_1=l_i$ and we have several subcases.

\smallskip
\noindent
Case 2.1. If $q_i>1$, then $l_1=l_i$, and $d^{k_1}(x)(d^{l_1}(x))^{q_1}$ can be reformulated as $d^{k_1}(x)(d^{k_1}(x))^{q_1}$. Similarly, $d^{k_i}(x^{p_i})(d^{l_i}(x))^{q_i}$ can be reformulated as $d^{k_i}(x^{p_i})(d^{k_1}(x))^{q_i}$.

\smallskip
\noindent
Case 2.1.1. If $k_1=k_i$, then we get a contradiction as in Case 1.

\smallskip
\noindent
Case 2.1.2. If $k_1\ne k_i$, i.e., $k_1>k_i$, then  $d^{k_1}(x)$ can only stems from $(d^{k_1}(x))^{q_i}$. As $q_i>1$, it follows that $l_1=k_1$, which implies $k_i=0$. Thus  $d^{k_1}(x)(d^{k_1}(x))^{q_1}$ have to be the same as $(d^{k_1}(x))^{q_i}$. Hence $q_i=q_1+1$, and hence $p_1=p_i+1>2$.

\smallskip
\noindent
Case 2.1.2.1. $k_1>1$. Since $p_1>2$, the expansion of
$d^{k_1}(x^{p_1})(d^{k_1}(x))^{q_1}$ contains a term of the form
$d(x)d^{k_1-1}(x)(d^{k_1}(x))^{q_1}$. This
cannot appear in the expansion of $d^{k_j}(x^{p_j})(d^{l_j}(x))^{q_j}$ for any $j\in \{2, \dots, m\}$. Indeed, if $k_1=k_j$ and $d(x)d^{k_1-1}(x)$ stems from $d^{k_j}(x^{p_j})$, then we get a contradiction as in Case 1. If $k_1=k_j$ and $d(x)d^{k_1-1}(x)$ stems from $(d^{l_j}(x))^{q_j}$, then $k_1=2, l_j=1, q_j=2$, and $(d^{k_1}(x))^{q_1}$ stems from the expansion of $d^{k_j}(x^{p_j})$. Hence by the maximality of $k_1$, we get $k_j=k_1$ and $q_1=1$. On the other hand, if $q_1=1$, then $q_i=2$ as well as $q_j$, hence $q_i=q_j$, but  $k_i=0$ and $k_j=k_1$, which is a contradiction.

\smallskip
\noindent
Case 2.1.2.2. $k_1=1$.
As $k_i<k_1$, then $k_i=0$. In this case $k_1=l_1=l_i=1$ and $k_i=0$. There is no other $k_j=1$ for any $j\in \{2, \dots m\}\setminus \{i\}$, since $l_1$ is maximal, and if $l_1=0$, then $k_1$ is not maximal.
If $k_j=0$, then the corresponding term is $x^{p_j}(d^{l_j})^{q_j}$, where $l_j\cdot q_j=q_1+1=K$ could be possible, but by the maximality of $k_1$ and $l_1$ the term $(d^{l_j})^{q_j}$ cannot be eliminated. Thus there is no other $k_j=0$ for any $j\in \{2, \dots m\}\setminus \{i\}$.
%we cannot get terms that contain $d(x)(d(x))^{q_1}$ from them. In this case the term $(d^{l_j})^{q_j}$ cannot be eliminated.
Therefore,  $i=m=2$ and the term $d(x)(d(x))^{q_1}$ can only be eliminated using the terms corresponding to $k_1$ and $k_2$. Namely,
$$f_1=\la_{1,1}\cdot d(x)+\la_{1,0}x, ~ ~ g_1(x)=\tla_{1,1}\cdot d(x)+\tla_{1,0}x$$
$$f_2(x)=\la_{2,0}x, ~ ~ g_2(x)=\tla_{2,1}\cdot d(x)+\tla_{2,0}x$$
and $q_2=q_1+1$, so that
$$\la_{1,1}(\tla_{1,1})^{q_1}p_1\cdot x^{p_1-1} d(x)^{q_1+1}+\la_{2,0}(\tla_{2,1})^{q_1+1}\cdot x^{p_1-1} d(x)^{q_1+1}=0,$$ hence $p_1\la_{1,1}(\tla_{1,2})^{q_1}+\la_{2,0} (\tla_{2,1})^{q_1+1}=0.$
Repeating the whole argument recursively for $K_j=\max(k_i+l_iq_i)\setminus \{K_1, \dots, K_{j-1}\}$ we get that either there is an $f_i$ of degree at least 2 and then there exists a $f_i$ of maximal degree $K_j$ such that $g_i=c_i\cdot x$, or every $f_i$ has degree at most $1$, and the corresponding functions has a strong connection as described above.

\smallskip
\noindent
Case 2.2. If $q_i=1$, then we have that $d^{k_1}(x)$ stems from $d^{l_i}(x)$, so $k_1=l_i$ and $l_1\cdot q_1=k_i$. In this case $d^{k_1}(x)(d^{l_1}(x))^{q_1}$ stems from the expansion of $d^{k_1}(x^{p_1})(d^{l_1}(x))^{q_1}$ and of $d^{k_i}(x^{p_i}) (d^{l_i})^{q_i}=d^{l_1q_1}(x^{N-1}) d^{k_1}(x)$.
As $q_i=1$, we have that $q_1>1$ and $p_i=N-1>1$, as $p_i, q_i$ are distinct. Furthermore, as $k_1$ is  maximal and $l_1\ne 0$ (otherwise we automatically get the result), we obtain $k_1\ge k_i\ge l_1q_1 \ge 2$.

Now we take the term $d^{l_1q_1-1}(x)d(x) d^{k_1}(x)$, this term stems from $d^{k_i}(x^{p_i}) (d^{l_i})^{q_i}=d^{l_1q_1}(x^{N-1}) d^{k_1}(x)$  as $p_i=N-1>1$.
Therefore, there must be a $j\in \{1, \dots, m\}$ with $j\ne i$ so that $d^{l_1q_1-1}(x)d(x) d^{k_1}(x)$ stems from
$d^{k_j}(x^{p_j}) (d^{l_j})^{q_j}$. As we have $d^{l_1q_1-1}(x)d(x) d^{k_1}(x)$ is the product of 3 terms and $q_j\ne 1$ as $q_i=1$ we have get that $q_j=2$.
We have three cases.

\smallskip
\noindent
Case 2.2.1.  $d(x) d^{k_1}(x)=(d^{l_j})^2$. Then $k_1=1$, which contradicts the fact that $k_1\ge 2$.

\smallskip
\noindent
Case 2.2.2.  $d^{l_1q_1-1}(x) d^{k_1}(x)=(d^{l_j})^2$. Then $k_1=l_1q_1-1$,  then $k_i=l_1q_1>k_1$, which contradicts the maximality of $k_1$.

\smallskip
\noindent
Case 2.2.3.  $d(x) d^{l_1q_1-1}(x)=(d^{l_j})^2$. Then $l_1q_1=2$, and as $q_1\ge 2$ and $l_1\ge 1$ we get that $l_1=1$ and $q_1=2$. This implies $q_j=q_1$ and hence $j=1$. Thus, $d^{k_i}(x^{p_i}) (d^{l_i})^{q_i}=d^{2}(x^{N-1}) d^{k_1}(x)$. Now we take the term $d^{2}(x) d^{k_1}(x)$. As it clearly stems from the expansion of the previous expression, there should be a $j\in\{2,\dots, m\}$, where $j\ne i$ (and also $j\ne 1$) so that $d^{2}(x) d^{k_1}(x)$ stems from the term $d^{k_j}(x^{p_j})(d^{l_j}(x))^{q_j}$. Then $q_j=1$, which is a contradiction as $q_i=1$ and $i\ne j$.
\end{enumerate}

\medskip
Summarizing these results we either have $l_1=0$, which indicates that the order of $f_1$ is
maximal and $g_1 (x) = c_1 \cdot x \, (x\in \mathbb{F})$ for some complex number $c_1$. Otherwise, for every $i=1, \ldots, n$ the functions $f_i$ and $g_i$ are of degree at most $1$.  Furthermore, if $f_{i_1}$ and $g_{i_1}$ is  of the form
$$ f_{i_1}=\la_{i_1,1} d(x)+\la_{i_1,0}x, ~ ~ g_{i_1}(x)=\tla_{i_1,1} d(x)+\tla_{i_1,0}x$$ then there exists an $i_2\in \{1, \dots, n\}$ such that
$$g_{i_1}=\la_{i_2,0}x, ~ ~ g_{i_2}(x)=\tla_{i_2,1} d(x)+\tla_{i_2,0}x$$ where
and $q_{i_2}=q_{i_1}+1$ ($p_{i_2}=p_{i_1-1}$), so that $\la_{i_1,1}, \tla_{i_1,1}, \la_{i_2,0}, \tla_{i_2,1}$ are nonzero complex numbers satisfying
    \begin{equation*}
    p_{i_1}\cdot\la_{i_1,1}\cdot(\tla_{i_1,1})^{q_{i_1}}+\la_{i_2,0}\cdot (\tla_{i_2,1})^{q_1+1}=0.
    \end{equation*}
\end{proof}
\begin{rem}
We get more than it is stated in Theorem \ref{T_mix_fo1}. Namely, if alternative (A) happens then the maximal order of $K$ is greater or equal to $k_i+q_i\cdot l_i$ for all $i\in \{1, \dots, n\}$, where $k_i$ is the maximal order of $f_i$ and $l_i$ is the maximal order of $g_i$. Thus, if $k_j+q_j\cdot l_j=K$ and $k_j\ne K$ for some $j\in \{1,\dots, n\}$, then we typically $k_i$ and $l_i$ is much smaller than $K$.
On the other hand, it is worth mentioning that the possibility that such $k_i, l_i$ do exist cannot be excluded by our results.
\end{rem}

%\begin{cor}\label{cor7}
%Under the conditions Theorem \ref{T_mix_fo1},
%suppose that the functions $f_{1}, \ldots, f_{n}$ as higher order derivations, have the same order, then for all $i=1, \ldots, n$
%\[
%g_i(x)=\la_i x\qquad (x\in \mathbb{F})
%\]
%with appropriate complex constants $\lambda_{1}, \ldots, \lambda_{n}$
%and equation \eqref{Eq_mixed} is in fact of the form
% \[
%  \sum_{i=1}^{n}\la_i^{N-i}f_i(x^{i})x^{N-i}=0,
% \]
%where some $\la_i\in \mathbb{C}$ can be $0$.
%In this case we have  $f_i\in \mathscr{D}_{n-1}(\mathbb{F})$ for all $i=1, \ldots, n$ as it was shown in %\cite{EbaRieSah, GseKisVin18}.
%In particular, this is the case, when $f_i=c_i\cdot f$ for all $i\in \{1, \dots, n\}$.
%\end{cor}

\begin{cor}\label{cor7}
Under the conditions Theorem \ref{T_mix_fo1} suppose that one of the following conditions is satisfied.
\begin{enumerate}[(A)]
\item
The functions $g_{1}, \ldots, g_{n}$, as higher order derivations, have the same order.
\item
The functions $f_{1}, \ldots, f_{n}$ as higher order derivations, have the same order.
\end{enumerate}
Then for all $i=1, \ldots, n$ there exists a complex number $\lambda_{i}$ such that
\[
g_i(x)=\la_i x\qquad (x\in \mathbb{F})
\]
and equation \eqref{Eq_mixed} is then of the following form
 \[
  \sum_{i=1}^{n}\la_i^{N-i}f_i(x^{i})x^{N-i}=0,
 \]
where some $\la_i\in \mathbb{C}$ can be $0$.
In this case we have  $f_i\in \mathscr{D}_{n-1}(\mathbb{F})$ for all $i=1, \ldots, n$ as it was shown in \cite{EbaRieSah, GseKisVin18}.
\end{cor}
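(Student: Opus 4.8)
The plan is to read the statement off Theorem~\ref{T_mix_fo1}. As always we may discard any identically vanishing function, so assume $f_1,\dots,f_n,g_1,\dots,g_n\not\equiv 0$; passing to a finitely generated subfield and back, we may also use the normal form of Corollary~\ref{cor:alg_ind}, writing $f_i=\sum_{j=0}^{k_i}\lambda_{i,j}d^{j}$ and $g_i=\sum_{j=0}^{l_i}\widetilde{\lambda}_{i,j}d^{j}$ for one fixed derivation $d$, where $\lambda_{i,k_i}\neq 0$, $\widetilde{\lambda}_{i,l_i}\neq 0$, and $k_i,l_i$ are the orders of $f_i,g_i$. By Theorem~\ref{T_mix_fo1} one of its two alternatives holds, and I would treat them in turn.

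If alternative~(A) of Theorem~\ref{T_mix_fo1} holds, then some $g_{i_0}(x)=c_{i_0}x$ and some $f_{i_0}$ has the globally maximal order $K$. Under hypothesis~(A) of the Corollary every $g_i$ has the same order as $g_{i_0}$, namely $0$, so $g_i(x)=\lambda_i x$ for all $i$. Under hypothesis~(B) every $f_i$ has order $K$; combining $k_i=K$ with the inequality $K\ge k_i+q_il_i$ recorded in the Remark following Theorem~\ref{T_mix_fo1} yields $q_il_i\le 0$, hence $l_i=0$ since $q_i\ge 1$, and again $g_i(x)=\lambda_i x$.

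If alternative~(B) of Theorem~\ref{T_mix_fo1} holds, then all $f_i,g_i$ have order at most $1$ and a distinguished pair $(i_1,i_2)$ of the shape~\eqref{eq1rendfg} occurs; in particular $f_{i_1}$ has order $1$ while $f_{i_2}$ has order $0$. Under hypothesis~(B) this is impossible. Under hypothesis~(A) one has to exclude this situation directly: here $g_{i_1},g_{i_2}$ both have order $1$, so the common order of all the $g_i$ would be $1$, and I would obtain a contradiction with non-triviality by a coefficient comparison. Expanding \eqref{Eq_mixed} in the algebraically independent quantities $x=d^{0}(x),d(x),d^{2}(x),\dots$ (Proposition~\ref{prop1} and Corollary~\ref{cor:alg_ind}), one records which products $f_i(x^{p_i})g_i(x)^{q_i}$ can contribute to a given monomial $d(x)^{a}x^{b}$ with $a+b=N$, and, beginning with the index carrying the largest $q_i$, uses \eqref{eq1rendfg}--\eqref{eq1rendeq} to eliminate the indices one after another along the recursion already present in the proof of Theorem~\ref{T_mix_fo1}. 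This coefficient chasing is the step I expect to be the main obstacle: unlike in alternative~(A), the distinguished pair couples two summands in the leading monomial, so the elimination is not a one-line argument and the bookkeeping must be organised carefully.

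In every case we reach $g_i(x)=\lambda_i x$ for all $i$, some $\lambda_i$ being possibly $0$. Substituting this back into \eqref{Eq_mixed} and using $q_i=N-p_i$, the equation becomes $\sum_{i=1}^{n}\lambda_i^{N-p_i}f_i(x^{p_i})x^{N-p_i}=0$ for $x\in\mathbb{F}$; in the normalisation $p_i=i$ this reads $\sum_{i=1}^{n}\lambda_i^{N-i}f_i(x^{i})x^{N-i}=0$. This is precisely the Ebanks-type equation whose additive solutions were determined in \cite{EbaRieSah, GseKisVin18}, and by those results $f_i\in\mathscr{D}_{n-1}(\mathbb{F})$ for all $i$, which finishes the argument.
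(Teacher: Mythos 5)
Your handling of hypothesis (B) is fine (alternative (B) of Theorem \ref{T_mix_fo1} is excluded because $f_{i_1}$ has order $1$ while $f_{i_2}$ is linear, and in alternative (A) the inequality $K\ge k_i+q_il_i$ from the remark after the theorem forces $l_i=0$ once all $k_i=K$), and so is hypothesis (A) when alternative (A) of the theorem holds. The genuine gap is the case you yourself flag as ``the main obstacle'': hypothesis (A) of the corollary together with alternative (B) of the theorem. You do not carry out the promised coefficient chasing, and in fact no such argument can succeed: the paper's own example in the remark following Corollary \ref{cor9} takes $f_1=g_1=g_2=d$ and $f_2(x)=-p_1x$ with $p_2=p_1-1$, $q_2=q_1+1$ (all four parameters distinct, so C(i)--C(iii) hold), and this is exactly a solution of type \eqref{eq1rendfg}--\eqref{eq1rendeq} in which $g_1$ and $g_2$ have the \emph{same} order, namely one, yet neither is of the form $\lambda_i x$. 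So within alternative (B) the hypothesis ``all $g_i$ have the same order'' does not produce a contradiction; the contradiction you plan to extract by expanding in $x,d(x),d^2(x),\dots$ simply is not there when the common order equals $1$.

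Consequently your proof of case (A) only works when the common order of the $g_i$ is $0$ (then the conclusion is immediate) or at least $2$ (then alternative (B), where all orders are at most $1$, is excluded and alternative (A) forces common order $0$, a contradiction); the order-one case is a genuine exception, exhibited by the paper itself, and cannot be disposed of by the recursion in the proof of Theorem \ref{T_mix_fo1}. If you want a complete argument you must either add this case distinction explicitly and note that the literal statement of case (A) requires the common order to be different from $1$ (or some equivalent extra input), or argue for a reinterpretation of hypothesis (A); as written, the deferred step is not merely unfinished bookkeeping but an unprovable claim. The concluding reduction to $\sum_i\lambda_i^{N-i}f_i(x^i)x^{N-i}=0$ (after the normalisation $p_i=i$) and the citation of \cite{EbaRieSah, GseKisVin18} for $f_i\in\mathscr{D}_{n-1}(\mathbb{F})$ are in order once $g_i(x)=\lambda_ix$ has been established.
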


\begin{cor}\label{cor9}
 Under the conditions Theorem \ref{T_mix_fo1}, suppose that
 \[
 f_i(x)=c_i\cdot g_i(x)
 \qquad
 \left(x\in \mathbb{F}\right)
 \]
 holds for all  for all $i\in \{1, \dots, n\}$ with some nonzero constants $c_i\in \mathbb{C}$, $i=1, \ldots, n$.
 Then
 \[
 f_i(x)=\la_i x \qquad  \left(x\in \mathbb{F}\right),
 \]
 and hence
 \[
 g_i(x)=c_i\la_i x \qquad (x\in \mathbb{F})
\]
with some complex constants $\la_i$ for all $i=1, \ldots, n$. Further these constants also have to fulfill $\displaystyle\sum_{i=1}^n c_i^{q_i}\la_i^{q_i+1}=0$.
\end{cor}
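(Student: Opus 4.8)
The plan is to combine the extra hypothesis with the dichotomy of Theorem~\ref{T_mix_fo1} and show that only its degenerate branch can survive. As in Remark~\ref{rem_vegy_pol} we may take $\mathbb{F}$ finitely generated and work with $f_i,g_i$ already in the reduced form \eqref{eq_mix_alap}; after deleting identically zero terms we may also assume every $f_i$ and $g_i$ is non-zero. The key point to keep in mind throughout is that, since $c_i\neq 0$, multiplication by $c_i$ preserves the order of a differential operator, so the relation $f_i=c_ig_i$ forces $f_i$ and $g_i$ to have the \emph{same} order for each $i$. Now apply Theorem~\ref{T_mix_fo1}: alternative (A) or alternative (B) holds.

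Alternative (B) is impossible. Indeed, in (B) there is an index $i_2$ with $f_{i_2}(x)=\la_{i_2,0}x$, an operator of order $0$, while $g_{i_2}(x)=\tla_{i_2,1}d(x)+\tla_{i_2,0}x$ with $\tla_{i_2,1}\neq 0$, an operator of order $1$; this contradicts the equality of orders of $f_{i_2}$ and $g_{i_2}$ noted above. Equivalently, $g_{i_2}=c_{i_2}^{-1}f_{i_2}$ is a constant times the identity, so by the linear independence of the basis $\{d^{\alpha}\}$ in Lemma~\ref{L:dermom} it cannot have a non-zero $d$-coefficient, again contradicting $\tla_{i_2,1}\neq 0$.

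Hence alternative (A) holds: some $g_{i_0}$ equals $x\mapsto\gamma x$ and $f_{i_0}$ contains, as a summand, a derivation of order $K$, where $K$ is the maximal order occurring among the $D_i$ and $\widetilde{D}_i$. But $f_{i_0}=c_{i_0}g_{i_0}$ is again a constant times the identity, i.e.\ of order $0$, so by the same basis property it cannot contain a summand of order $K\geq 1$; therefore $K=0$. This forces every $f_i$ and every $g_i$ to be a constant times the identity on the chosen subfield, and since these constants are visibly independent of the subfield, $f_i(x)=\la_i x$ and $g_i(x)=c_i\la_i x$ hold on all of $\mathbb{F}$. Finally, substituting into \eqref{Eq_mixed} and using C(ii), each term becomes $f_i(x^{p_i})g_i(x)^{q_i}=\la_i x^{p_i}(c_i\la_i x)^{q_i}=c_i^{q_i}\la_i^{q_i+1}x^{N}$, so $\big(\sum_{i=1}^{n}c_i^{q_i}\la_i^{q_i+1}\big)x^{N}=0$ for all $x\in\mathbb{F}$, and putting $x=1$ yields $\sum_{i=1}^{n}c_i^{q_i}\la_i^{q_i+1}=0$.

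The substitution at the end and the finitely generated reduction are routine; the one step that needs care is the elimination of the possibility $K\geq 1$ inside alternative (A). There one must read ``$f_{i_0}$ contains a derivation of order $K$ as a summand'' with respect to the basis of Lemma~\ref{L:dermom} (cf.\ Corollary~\ref{cor:alg_ind}), so that a function which is merely a constant times the identity genuinely has no such summand unless $K=0$; once this is granted the whole argument is immediate.
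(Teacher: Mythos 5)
Your argument is correct and is essentially the paper's intended derivation: the corollary is stated as an immediate consequence of Theorem \ref{T_mix_fo1}, and your steps -- ruling out alternative (B) because $f_{i_2}=c_{i_2}g_{i_2}$ would force equal orders while (B) makes $f_{i_2}$ of order $0$ and $g_{i_2}$ of order $1$, then forcing $K=0$ in alternative (A) via the uniqueness of the representation in the basis of Lemma \ref{L:dermom}, and finally substituting the linear forms into \eqref{Eq_mixed} -- reproduce exactly that route. One bookkeeping caveat: from the hypothesis $f_i=c_i g_i$ and $f_i(x)=\lambda_i x$ one gets $g_i(x)=c_i^{-1}\lambda_i x$, so the resulting constraint is $\sum_{i=1}^n c_i^{-q_i}\lambda_i^{q_i+1}=0$; the forms $g_i(x)=c_i\lambda_i x$ and $\sum_{i=1}^n c_i^{q_i}\lambda_i^{q_i+1}=0$ that you state correspond to the hypothesis $g_i=c_i f_i$, i.e.\ they reflect a typo in the corollary as printed rather than a gap in your reasoning, but your write-up should derive the constants from the stated hypothesis instead of asserting the printed formula.
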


\begin{rem}
It seems that all of the previously mentioned examples lead to the equation $$\sum_{i=1}^{n}\la_i^{N-i}f_i(x^{i})x^{N-i}=0.$$ We note that the class of solutions of equation \eqref{Eq_mixed} is wider in general.

The simplest example is the following. Let  $p_1, q_1, p_1-1, q_1+1$ be distinct positive integers with $p_1>1$ and $d\colon \mathbb{F}\to \mathbb{C}$ be a derivation. Define the functions $f_{1}, f_{2}, g_{1}, g_{2}$ by $f_1=g_1=g_2=d$ and
\[f_2(x)=-p_1 x
\qquad
\left(x\in \mathbb{F}\right).
\]
Then
\begin{multline*}
f_1(x^{p_1})(g_1(x))^{q_1}+f_2(x^{p_1-1})g_2(x^{q_1+1})=
d(x^{p_{1}})d(x)^{q_{1}}+(-p_{1}x^{p_{1}-1})d(x)^{q_{1}+1}
\\=
p_{1}x^{p_{1}-1}d(x)d(x)^{q_{1}}+(-p_{1}x^{p_{1}-1})d(x)^{q_{1}+1}=0
\end{multline*}
for all $x\in \mathbb{F}$.
\end{rem}

The corollaries and the remark above motivate the following conjecture.

\begin{conj}\label{conj_mixed}
Let $n$ be a positive integer, $\mathbb{F}\subset \mathbb{C}$ be a field and
 $p_{1}, \ldots, p_{n}, q_{1}, \ldots, q_{n}$ be fixed positive integers fulfilling conditions C(i)--C(iii).
 Assume that the additive functions $f_{1}, \ldots, f_{n}, g_{1}, \ldots, g_{n}\colon \mathbb{F}\to \mathbb{C}$ satisfy equation \eqref{Eq_mixed}. Then every function is a generalized exponential polynomial function of degree at most $n-1$.
 In particular, if
 \begin{equation}\label{eq_mixred3}
  f_{i}(x)= D_i(x)
  \qquad
  \text{and}
  \qquad
  g_{i}(x)= \widetilde{D}_i(x)
  \qquad
  \left(x\in \mathbb{F}^{\times}\right)
 \end{equation}
for some derivations $D_i, \widetilde{D_i}$ ($i=1, \ldots, n$), then the order of $D_i, \widetilde{D}_i$ is at most $n-1$.
\end{conj}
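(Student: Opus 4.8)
The plan is to prove Conjecture~\ref{conj_mixed} by induction on $n$, building on Theorem~\ref{T_mix_fo1}, Corollary~\ref{cor7} and the known description of the solutions of equation~\eqref{eq_Ebanks}. First I would invoke Theorem~\ref{thm_decop_mixed2} together with the reduction in Remark~\ref{rem_vegy_pol}: it is enough to argue on an arbitrary finitely generated subfield and to assume $f_i=D_i=\sum_{j=0}^{k_i}\la_{i,j}d^{j}$ and $g_i=\widetilde D_i=\sum_{j=0}^{l_i}\tla_{i,j}d^{j}$ for a single derivation $d$, with $\la_{i,k_i}\neq 0$ and $\tla_{i,l_i}\neq 0$; the goal then is $k_i\le n-1$ and $l_i\le n-1$ for all $i$. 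The base case $n=1$ is vacuous once identically zero summands are discarded, since a product $f_1(x^{p_1})g_1(x)^{q_1}$ of two nonzero generalized exponential polynomials on $\mathbb{F}^{\times}$ cannot vanish identically (by the symmetrization argument both factors are non-trivial, and a product of nonzero generalized exponential polynomials is nonzero by Lemma~\ref{L_Lin_Ind}).

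For the inductive step I would apply Theorem~\ref{T_mix_fo1}. In its alternative~(B) every $f_i$ and $g_i$ has order at most $1\le n-1$, so we are done. In alternative~(A) there is an index $i_0$ with $g_{i_0}(x)=c_{i_0}x$ and $f_{i_0}$ carrying a top order derivation $d^{K}$, where $K$ is the maximal individual order occurring. Split $\{1,\dots,n\}$ into $L=\{i:g_i\text{ linear}\}$ and its complement. The partial sum over $L$ is, after absorbing the constants $c_i^{q_i}$ and using $p_i+q_i=N$, an instance of equation~\eqref{eq_Ebanks}, for which the results cited in Corollary~\ref{cor7} (see \cite{EbaRieSah,GseKisVin18}) give $f_i\in\mathscr{D}_{|L|-1}(\mathbb{F})$, hence order at most $|L|-1\le n-1$. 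For the indices outside $L$ I would exploit the algebraic independence of the composition basis $\{d^{\alpha}\}$ (Lemma~\ref{L:dermom}, Corollary~\ref{cor:alg_ind}): for each top order monomial $d^{K}(x)\cdot(\text{lower-order product})$ one extracts a scalar identity, feeds it back into the case analysis of the proof of Theorem~\ref{T_mix_fo1}, and iterates — exactly as the phrase ``repeating the whole argument recursively for $K_j$'' in that proof indicates — so that each round removes one index. After finitely many rounds one is left either in alternative~(B) or with a genuine \eqref{eq_Ebanks}-type equation in at most $n$ unknowns, and propagating the order bound through this bookkeeping should yield the uniform estimate $n-1$.

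The hard part — and the reason the statement is only a conjecture — is to make the ``peeling off of index $i_0$'' rigorous and to control the \emph{individual} orders, not just the total degrees. After deleting the term $f_{i_0}(x^{p_{i_0}})g_{i_0}(x)^{q_{i_0}}$ the residual equation is no longer identically zero, so it is not literally an instance of \eqref{Eq_mixed} with $n-1$ summands; one must argue at the level of the $d^{\alpha}$-expansion that the cancellations forced by the top order monomials genuinely decouple the $i_0$-term from the rest. A more serious obstruction is the one flagged in the remark following Theorem~\ref{T_mix_fo1}: the present techniques control the total degrees $k_i+q_il_i$ through Proposition~\ref{prop1}, but they do not exclude solutions in which some $k_i$ or $l_i$ is large while $k_i+q_il_i$ stays at the maximum~$K$. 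Bounding the individual orders — precisely what the conjecture demands, via Theorem~\ref{thm_KisLac} — would seem to require a sharpened version of the Bell-polynomial bookkeeping behind Proposition~\ref{prop1}, organised as a simultaneous induction on $n$ and on the multiset $\{k_i+q_il_i:i=1,\dots,n\}$. I expect this combinatorial separation of ``where the top order derivation can sit'' to be the genuine crux.
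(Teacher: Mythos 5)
You should first note that the statement you were asked to prove is stated in the paper as a \emph{conjecture}: the authors explicitly write that they cannot verify it in full generality and only establish it under the additional hypothesis $q_i\ge \frac{N}{2}$ (the theorem following Conjecture \ref{conj_mixed}), by a quite different mechanism than yours — they extract the coefficients of the monomials $d^{K-t}(x)\,(d(x))^{t}\,x^{N-t-1}$ for $t=0,\dots,m'-1$ and show that the resulting linear system has a Vandermonde matrix in the distinct exponents $p_i$, forcing $\lambda_{i,K}(\widetilde\lambda_{i,0})^{q_i}=0$ and hence a contradiction with $K\ge n$. Your sketch contains no such quantitative step, and as you yourself concede, it is not a proof but a programme whose crux is left open.

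The concrete gaps are these. First, the splitting of $\{1,\dots,n\}$ into $L=\{i: g_i \text{ linear}\}$ and its complement, with the claim that the partial sum over $L$ is an instance of \eqref{eq_Ebanks}, is unjustified: after the reduction of Corollary \ref{cor:alg_ind} all terms are polynomial expressions in the powers of a \emph{single} derivation $d$ together with $x$, so there is no remaining algebraic independence that would force the two partial sums to vanish separately; cancellations between terms with linear $g_i$ and terms with nonlinear $g_i$ are exactly what Theorem \ref{T_mix_fo1} has to fight against (its Case 2.2 is precisely such an interaction), so you cannot invoke the \cite{EbaRieSah,GseKisVin18} bound $f_i\in\mathscr{D}_{|L|-1}(\mathbb{F})$ for the $L$-part alone. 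Second, the ``peel off $i_0$ and induct on $n$'' step fails for the reason you already flag: deleting one summand destroys the identity, and nothing in Theorem \ref{T_mix_fo1} or Lemma \ref{L:dermom} shows that the top-order cancellations decouple that summand from the rest. Third, and most importantly, the inductive bookkeeping you propose only controls the maxima of the combined quantities $k_i+q_il_i$, whereas the conjecture demands bounds on the individual orders $k_i,l_i$; the paper's own remark after Theorem \ref{T_mix_fo1} states that the existence of solutions with large individual $k_i,l_i$ but bounded $k_i+q_il_i$ ``cannot be excluded by our results.'' Since your argument reduces to exactly those results, it cannot close this gap; some genuinely new input (such as the Vandermonde-type coefficient analysis the authors use when $q_i\ge\frac{N}{2}$, or a sharpened version of Proposition \ref{prop1}) is required, and in its absence the proposal does not prove the statement.
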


Although we cannot verify the conjecture in its full generality, we can handle the case when $q_i\ge\frac{N}{2}$. We note that this condition complements the one in Theorem \ref{thm_mixed}.

\begin{thm}
Let $n$ be a positive integer, $\mathbb{F}\subset \mathbb{C}$ be a field and
 $p_{1}, \ldots, p_{n}, q_{1}, \ldots, q_{n}$ be fixed positive integers fulfilling conditions C(i)--C(iii) and $q_i\ge \frac{N}{2}$.
 Assume that the additive functions $f_{1}, \ldots, f_{n}, g_{1}, \ldots, g_{n}\colon \mathbb{F}\to \mathbb{C}$ satisfy functional equation \eqref{Eq_mixed}. Then every function $f_i$ (resp. $g_i$) is generalized exponential polynomials of degree at most $n-1$.
\end{thm}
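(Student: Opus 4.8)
The plan is to combine the reduction machinery already developed for Theorem~\ref{T_mix_fo1} with a descent argument that exploits the extra numerical leverage of the hypothesis $q_i\ge N/2$. First I would apply Theorem~\ref{thm_decop_mixed2} to conclude that all of $f_1,\dots,f_n,g_1,\dots,g_n$ are generalized exponential polynomials on $\mathbb{F}^{\times}$, assume (as we always may) that none is identically zero, and invoke the algebraic-independence reduction of Remark~\ref{rem_vegy_pol}: it suffices to handle a single exponential, and after passing to the $\sim$-class of that exponential (Remark~\ref{rem2.4}) we are reduced to $f_i(x)=P_i(x)\,x=D_i(x)$ and $g_i(x)=Q_i(x)\,x=\widetilde D_i(x)$, where $D_i,\widetilde D_i$ are higher order derivations, $\deg P_i$ being the order of $D_i$ and $\deg Q_i$ that of $\widetilde D_i$ (Theorem~\ref{thm_KisLac}). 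Working over a finitely generated subfield and using Corollary~\ref{cor:alg_ind}, I would represent everything through a single derivation $d$, $f_i=\sum_{j=0}^{k_i}\lambda_{i,j}d^{\,j}$ and $g_i=\sum_{j=0}^{l_i}\widetilde\lambda_{i,j}d^{\,j}$ with $\lambda_{i,k_i}\neq 0\neq\widetilde\lambda_{i,l_i}$; the claim to be proved then becomes $\max_i\max(k_i,l_i)\le n-1$.

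The place where the hypothesis enters is the following. Since $p_i+q_i=N$ and the exponents are pairwise distinct, $q_i\ge N/2$ in fact forces $q_i>N/2>p_j$ for \emph{all} $i,j$, so every $p$-exponent is strictly smaller than every $q$-exponent (and C(iii) becomes automatic). Hence in any monomial occurring in the $d$-expansion of $f_i(x^{p_i})g_i(x)^{q_i}$ the number of genuine derivative factors contributed by $f_i(x^{p_i})$ is at most $p_i$, hence strictly less than each $q_j$. This single fact is what I would lean on throughout.

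For the descent, substitute the above representations into \eqref{Eq_mixed} and expand in the algebraically independent family $\{x\}\cup\{d^{\,t}(x):t\ge 1\}$ (Corollary~\ref{cor:alg_ind}, Theorem~\ref{T_Poly_Ind}), so every coefficient must vanish. Let $K$ be the largest order among the $D_i,\widetilde D_i$. I would isolate the monomials that are \emph{linear} in $d^{\,K}(x)$: such a factor can enter the $i$-th term only from $f_i(x^{p_i})$ (forcing $k_i=K$) or from one of the $q_i$ slots of $g_i(x)^{q_i}$ (forcing $l_i=K$), which yields an auxiliary identity involving only the indices $i$ with $k_i=K$ or $l_i=K$. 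Here $p_j<q_i$ is decisive: it forbids the high-order factor $d^{\,K}$ from being one of the $q_{i'}$ copies produced by $g_{i'}$ in another term, since by Proposition~\ref{prop1} that would demand absorbing more than $p_{i'}$ free factors into the partition of $k_{i'}$ coming from $f_{i'}(x^{p_{i'}})$. Running the case analysis of the proof of Theorem~\ref{T_mix_fo1} with this simplification shows that each index carrying the maximal order must be matched by a \emph{new} index that again carries order $K$, but whose companion function (the $g$ in the cancellation just performed) has a strictly richer lower-order part; iterating, every such matching consumes one further index, so the number of indices needed grows with $K$, and running out of indices forces $K\le n-1$. Together with the small cases ($n=1$ has no solution with both functions nonzero, $n=2$ is Theorem~\ref{T_mix_fo1}), this gives that every $f_i$ and $g_i$ is a generalized exponential polynomial of degree at most $n-1$.

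The main obstacle is not the inequality $p_j<q_i$ itself but converting it into an honest count: one must verify that after matching \emph{all} monomials linear in $d^{\,K}$ — not merely the extremal one $x^{N-1}d^{\,K}(x)$ — the auxiliary identity really does involve strictly fewer free indices together with strictly more forced lower-order structure, so that the iteration terminates after at most $n-1$ steps. This requires tracking the orders $k_i$ of the $f_i$ and the orders $l_i$ of the $g_i$ simultaneously, and these play asymmetric roles exactly as in Theorem~\ref{T_mix_fo1}; the reduction to a single derivation via Corollary~\ref{cor:alg_ind}, which only controls the sub-maximal terms, also has to be handled with some care at the top order.
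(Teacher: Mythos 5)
Your reduction steps (decomposability, restriction to one exponential via Remark \ref{rem_vegy_pol} and $\sim$, passage to higher order derivations, finitely generated subfields, a single derivation $d$ via Corollary \ref{cor:alg_ind}, and the appeal to Theorem \ref{T_mix_fo1} so that only the alternative ``some $f_{i_0}$ has maximal order $K$ and $g_{i_0}(x)=c_{i_0}x$'' needs work) all match the paper. The gap is at the decisive step. Your mechanism for bounding $K$ --- isolate monomials linear in $d^{K}(x)$, then iterate a matching argument in which ``each index carrying the maximal order must be matched by a new index'' so that ``running out of indices forces $K\le n-1$'' --- is never substantiated, and you concede as much yourself. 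There is no reason offered why a cancellation partner must be a \emph{new} index, nor why the iteration terminates after $n-1$ steps; indeed, the cancellations at top weight take place among the (at most $n$) terms with $k_i=K$, $l_i=0$ themselves, so index exhaustion is not the obstruction at all. Moreover, monomials merely linear in $d^{K}(x)$ give essentially one relation among the leading coefficients, which cannot by itself force $K\le n-1$; some quantitative input exploiting the distinctness of $p_1,\dots,p_n$ is indispensable, and your sketch contains none.

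What the paper does instead is the following concrete count, and this is what your plan is missing. Writing the top-weight part as $\sum_{i=1}^{m}\lambda_{i,k_i}(\widetilde\lambda_{i,l_i})^{q_i}d^{k_i}(x^{p_i})(d^{l_i}(x))^{q_i}=0$ and assuming $K\ge n$, one inspects the whole family of monomials $d^{K-t}(x)\,(d(x))^{t}\,x^{N-t-1}$ for $t=0,\dots,m'-1$, where $m'$ counts the indices with $k_i=K$, $l_i=0$. The hypothesis $q_i\ge N/2$ enters exactly here: together with the distinctness of the parameters it yields $q_i>n$, so any term with $l_i>0$ produces only monomials with more than $n$ derivation factors and thus contributes nothing to this family; the family is fed only by the terms $d^{K}(x^{p_i})x^{q_i}$. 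By Proposition \ref{prop1} the coefficient coming from index $i$ is, up to factors independent of $i$, the falling factorial $p_i(p_i-1)\cdots(p_i-t)$, so the vanishing of all these coefficients is a linear system equivalent to a Vandermonde system in the distinct $p_i$'s. Its nonsingularity forces $\lambda_{i,K}(\widetilde\lambda_{i,0})^{q_i}=0$ for all such $i$, contradicting $\lambda_{i,K}\neq 0\neq\widetilde\lambda_{i,0}$, whence $K\le n-1$. Your observation $q_i>p_j$ is true and does imply $q_i>n$, but without the $t$-parametrized family of monomials and the Vandermonde nonsingularity (or an equivalent argument using the distinctness of the $p_i$), the proposed descent does not deliver the stated bound.
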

\begin{proof}
By Lemma \ref{lem_decop_mixed}, the solutions $f_i, g_i$ of \eqref{Eq_mixed} are decomposable functions for all $i=1, \dots, n$, i.e., they are generalized exponential polynomial  functions of the form $\sum P_jm_j$ and $\sum Q_jm_j$, respectively.
If restricting the equation to the terms containing $m_j$ for fixed $j$ as in Remark \ref{rem_vegy_pol} we can prove that the degree $P_j$ is at most $n-1$, then it holds in general for the original solutions $f_i, g_i$. By the equivalence relation $\sim$ we can assume that $m_j(x)=x$. Then, by Theorem \ref{thm_KisLac}, the solutions can be seen as derivations $D_i$ and $D_j$ having the same order as the degree of $P_j$ and $Q_j$, respectively.

Now we can apply Theorem \ref{T_mix_fo1}, which implies that either there is some $f_{i_0}=D_{i_0}$ of maximal order and the corresponding $g_{i_{0}}$ is of the form  $g_{i_0}(x)=c_{i_0}\cdot x\, (x\in \mathbb{F})$, or the order of $f_i, g_i$ is at most $1$ for each $i=1, \ldots, n$. In the latter case we get the result, as $n-1\ge 1$. The preceding case is more technical.
First we restrict our attention to finitely generated subfields of $\mathbb{F}$, since the result on these restricted fields implies the statement on $\mathbb{F}$. Hence from now on we assume that $\mathbb{F}$ is finitely generated. In this case the derivations are differential operators. Now as in the proof of Theorem \ref{T_mix_fo1}, using Corollary \ref{cor:alg_ind}, we can assume that every differential operator is a function of a given derivation $d$ and hence $f_i, g_i$ ($i=1\dots, n$)
can be represented as
 \[
 f_{i}(x)=D_i(x)= \sum_{j=0}^{k_i}\lambda_{i, j}d^{j}(x)
 \qquad
 \text{and}
 \qquad
 g_{i}(x)=\widetilde{D}_i(x)= \sum_{j=0}^{l_i}\widetilde{\lambda}_{i, j}d^{j}(x)
 \qquad
 \left(x\in \mathbb{F}^{\times}\right)
\] and equation \eqref{Eq_mixed} as
\[
 \sum_{i=1}^{n}\left(\sum_{j=0}^{k_i}\lambda_{i, j}d^{j}(x^{p_{i}}) \right)
 \cdot \left(\sum_{j=0}^{l_i}\widetilde{\lambda}_{i, j}d^{j}(x)\right)^{q_{i}}=0
 \qquad
  \left(x\in \mathbb{F}^{\times}\right).
\]
As in the proof of Theorem \ref{T_mix_fo1}, homogeneity argument lead to
\begin{equation}\label{eq_mixer2}\sum_{i=1}^m \lambda_{i, k_i}(\widetilde{\lambda}_{i, l_i})^{q_i}d^{k_i}(x^{p_{i}})(d^{l_i}(x))^{q_{i}}=0,
\end{equation}
where $i=1, \dots, m$ are those indices that satisfy $K=k_i+l_iq_i$ and $K$ is the largest possible. Now we have that $i_0\in \{1,\dots, m\}$, i.e., $k_{i_0}=K$ and $l_{i_0}=0$.

We have to show that $K\le n-1$, which immediately implies the statement.Assume contrary that $K\ge n$.  In the expansion of equation \eqref{eq_mixer2} all monomials of $x, d, d^2, \dots, d^K$ have to vanish. Suppose that the first $m'$  indices satisfy that $k_i=K, l_i=0$ for all $i\in \{1, \dots, m'\}$.
This also means that $\la_{i,K}\ne 0$ and $\tla_{i,0}\ne 0$.
Take monomials
\begin{equation}\label{eq_eh}d^{K-t}(d(x))^{t}x^{N-t-1} \qquad  \textrm{ for } \qquad  t=0,\dots, n-1.\end{equation}
Note that $q_i\ge \frac{N}{2}$ immediately implies that $q_i>n$ for all $i=1, \dots, n$. Hence, if $l_i>0$, i.e., $i>m'$, then none of the terms defined in \eqref{eq_eh} can appear in the expansion of $d^{k_i}(x^{p_i})(d^{l_i}(x))^{q_i}$ if $i>m'$. Hence these terms in \eqref{eq_eh} can stem only from the expansion of
$d^{K}(x^{p_{i}})(x)^{q_{i}}$, where $i=1\dots m'$.
In this case for any $t=0, \dots, m'-1$ the term $d^{K-t}(d(x))^{t}$ stems from $d^{K}(x^{p_i})$.  By Proposition \ref{prop1}, its coefficient is $$\binom{K}{K-t}\binom{p_i}{\underbrace{1,\dots,1,}_{p_i-t+1}}.$$
As the coefficient of $d^{K-t}(d(x))^{t}x^{N-t-1}$ has to vanish
we get that
$$\sum_{i=1}^{m'}\la_{i,K}(\tla_{i,0})^{q_i} \binom{K}{K-t}\cdot\binom{p_i}{\underbrace{1,\dots,1}_{t+1}}=0 \qquad  ( t=0,\dots, m'-1)$$

This can written in the following matrix form
\[
 \begin{pmatrix}
 \binom{p_{1}}{1} & \ldots & \binom{p_{m'}}{1}\\
 \binom{p_{1}}{1,1} & \ldots & \binom{p_{m'}}{1,1} \\
 \vdots & \ddots & \vdots\\
  \binom{p_{1}}{\underbrace{1, \ldots, 1}_{m'}} & \ldots & \binom{p_{m'}}{\underbrace{1, \ldots, 1}_{m'}}
 \end{pmatrix}
 \cdot
 \begin{pmatrix}
  \lambda_{1, K}(\widetilde{\lambda}_{1, 0})^{q_1}\\
  \lambda_{2, K}(\widetilde{\lambda}_{2, 0})^{q_2}\\
  \vdots \\
  \lambda_{m', K}(\widetilde{\lambda}_{m', 0})^{q_{m'}}
 \end{pmatrix}
=
\begin{pmatrix}
 0\\
 0\\
 \vdots
 \\
 0
\end{pmatrix}
 \]
Note that none of the rows of the previous matrix is identically $0$. Indeed, there exists $p_{i'}\ge m'$ for some $i'\in\{1, \dots, m'\}$, since all $p_i$ ($i=1, \dots, m'$) are different and positive integers.
In this case it is straightforward to verify that the previous matrix equation is equivalent to
\[
 \begin{pmatrix}
 p_{1} & \ldots & p_{m'}\\
 p_{1}^2 & \ldots & p_{m'}^2 \\
 \vdots & \ddots & \vdots\\
  p_{1}^{m'} & \ldots & p_{m'}^{m'}
 \end{pmatrix}
 \cdot
 \begin{pmatrix}
  \lambda_{1, K}(\widetilde{\lambda}_{1, 0})^{q_1}\\
  \lambda_{2, K}(\widetilde{\lambda}_{2, 0})^{q_2}\\
  \vdots \\
  \lambda_{m', K}(\widetilde{\lambda}_{m', 0})^{q_{m'}}
 \end{pmatrix}
=
\begin{pmatrix}
 0\\
 0\\
 \vdots
 \\
 0
\end{pmatrix}
 \]

The matrix of this equation is a Vandermonde matrix of $p_i$. As $p_i$ are all different the equation has only trivial solutions. Thus $\lambda_{i, K}(\widetilde{\lambda}_{i, 0})^{q_i}=0$ for every $i\in \{1, \dots, m'\}$, which is a contradiction as $\lambda_{i, K}\ne 0$ and $\widetilde{\lambda}_{i, 0}\ne 0$. This contradiction shows that the maximal order $K$ of $D_i$ and $\widetilde{D}_i$ is at most $n-1$. This also finishes the proof of the theorem.

\end{proof}

\subsubsection*{Special cases of equation \eqref{Eq_mixed}}
Now we consider equations % special cases of equation \eqref{Eq_mixed}.
%All the equations we consider here are
of the form
\begin{equation}\label{eq2mixed}
 f_{1}(x^{p_{1}})g_{1}(x)^{q_{1}}+f_{2}(x^{p_{2}})g_{2}(x)^{q_{2}}=0
 \qquad
 \left(x\in \mathbb{F}\right),
\end{equation}
where $f_{1}, f_{2}, g_{1}, g_{2}\colon \mathbb{F}\to \mathbb{C}$ denote the unknown additive functions and the parameters $p_{1}, p_{2}, q_{1}, q_{2}$ fulfill conditions C(i)--C(iii), i.e., $p_1<p_2$, $p_1+q_1=p_2+q_2$ and $p_1,p_2, q_1,q_2$ are all distinct and positive integers.
Even in this case, which can be considered as the `simplest' example (as it contains only two summands), the description of all solutions are elaborate. First, in Theorem \ref{lem2tag1} we characterize all solutions, where the corresponding functions are of the form $P(x)\cdot x$. Secondly, in Lemma \ref{lem2tag2} we consider the case when the additive solutions (that are exponential polynomials on the multiplicative group) contain more than one exponentials in their representations.

In Lemma \ref{lem_decop_mixed} we have shown that all solutions of \eqref{eq2mixed} are generalized exponential polynomial functions of the form $\sum_i P_im_i$, where $P_i$ are generalized polynomials and $m_i$ are exponential functions on $\mathbb{F}^{\times}$. In Example \ref{Exmix} we illustrated that the solutions can be sums of generalized exponential polynomials, however in all examples $g_i$'s are linear combinations of different exponential functions.
At the same time, we can concentrate on the solutions, where all $f_i, g_i$ are of the form $P_im$ and $Q_im$ for a given exponential function $m$.
Using the equivalence relation $\sim$ (see Lemma \ref{lem_equ_mixed}) we can assume that solutions are of the form $P_i(x)\cdot x$, that is, those solutions are higher order derivation or linear functions. Note that these solutions are the building blocks of the solutions in general, since by algebraic independence of exponential polynomial  functions, necessarily equation \eqref{eq2mixed} has such a solution, in any case.

\begin{thm}\label{lem2tag1}
Suppose that $f_1, f_2, g_1, g_2\colon \mathbb{F}\to \mathbb{C}$ are higher order derivations that also fulfill equation \eqref{eq2mixed} so that the parameters $p_1, p_2, q_1, q_2$ satisfy conditions C(i)--C(iii). Then all of them (as higher order derivations) is of order at most $1$ and the solutions are one of the following.
\begin{enumerate}
\item[(A)] \[
 f_{1}(x)=\la_{1,1} d(x)+\la_{1,0}x, ~ ~ f_{2}(x)=\la_{2,0}x, \qquad
 \left(x\in \mathbb{F} \right)\]
 %\qquad
 %\text{and}
 %\qquad
 \[g_{1}(x)= \mu_{1,1} d(x)+\mu_{1,0}x, ~ ~ g_{2}(x)= \mu_{2,1} d(x)+\mu_{2,0}x,
 \qquad
 \left(x\in \mathbb{F}\right)
\]
where $\la_{1,1}, \la_{2,0}, \mu_{1,1}, \mu_{2,1}\in \mathbb{C}$ are nonzero, $q_2=q_1+1$ (i.e, $p_2=p_1+1$) satisfying
\[p_{1}\cdot\la_{1,1}\cdot(\mu_{1,0})^{q_{1}}+\la_{2,0}\cdot (\mu_{2,1})^{q_1+1}=0.\]
%\[\la_{1,0}\cdot(\mu_{1,0})^{q_{1}}+\la_{2,0}\cdot (\mu_{2,0})^{q_1+1}=0\]
Equivalently, there is a function $h(x)=d(x)+ax$ such that $$f_1(x^{p_1})=\la_{1,1}p_1h(x)x^{p_1-1},\ \  g_1(x)=\mu_{1,1}h(x)\  \textrm{ and } g_2(x)=\mu_{2,1}h(x).$$
\item[(B)]
\[
 f_{1}(x) =  \lambda_{1, 1}d(x)+ \lambda_{1, 0}x, ~ ~ f_2=\lambda_{2, 0}x
 \qquad \left(x\in \mathbb{F}\right)
\]
 %\text{and}
 \[%\qquad
 g_{1}(x) = \mu_{1, 0}x, ~ ~ g_{2}(x) = \mu_{2,1}d(x)+ \mu_{2, 0}x
 \qquad
 \left(x\in \mathbb{F}\right)
\]
%\end{enumerate}
where %with appropriate complex constants $\lambda_{i, j}, \mu_{i, j}$ ($i=1, 2$, $j=0, 1$) and derivation $d\colon \mathbb{F}\to \mathbb{C}$ ($i=1, 2$).
$\la_{1,1}, \la_{2,0}, \mu_{1,0}, \mu_{2,1}\in \mathbb{C}$ are nonzero, $q_2=1$ (i.e, $p_2=N-1$) satisfying
\[p_{1}\cdot\la_{1,1}\cdot(\mu_{1,0})^{q_{1}}+\la_{2,0}\cdot \mu_{2,1}=0.\]
%\[\la_{1,0}\cdot(\mu_{1,0})^{q_{1}}+\la_{2,0}\cdot \mu_{2,0}=0\]
Furthermore, $f_1(x^{p_1})=c_1g_2(x)x^{p_1-1}, \ \ g_1(x)=c_2f_2(x)$, where $c_1\cdot c_2=-\la_{2,0}^{1-q_1}$.
\item[(C)]
\[
 f_{1}(x) =  \lambda_{1, 1}d(x)+ \lambda_{1, 0}x, ~ ~ f_2=\lambda_{2, 1}d(x)+ \lambda_{2, 0}x,
 \qquad  \left(x\in \mathbb{F}\right)\]
% \text{and}
% \qquad
\[ g_{1}(x) = \mu_{1, 0}x, ~ ~ g_{2}(x) = \mu_{2, 0}x,
 \qquad
 \left(x\in \mathbb{F}\right)
\]
where $\la_{i,j}, \mu_{i,0}\in \mathbb{C}$ for all $i\in \{1,2\}, j\in \{0,1\}$, that satisfies
\[p_{1}\cdot\la_{1,1}\cdot(\mu_{1,0})^{q_{1}}+p_2\cdot\la_{2,1}\cdot (\mu_{2,0})^{q_2}=0.\]
\[\la_{1,0}\cdot(\mu_{1,0})^{q_{1}}+\la_{2,0}\cdot (\mu_{2,0})^{q_2}=0\]
In particular, $(x^{{p_2}-{p_1}})f_1(x^{p_1})=cf_2(x^{p_2})$ for some $c\in \mathbb{C}$.

We note that this case includes those, when all solutions are linear functions.
\end{enumerate}
\end{thm}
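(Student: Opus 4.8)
The plan is to argue in two stages: \emph{Stage~1} reduces everything to order at most $1$, and \emph{Stage~2} classifies the order-$\le 1$ solutions by turning \eqref{eq2mixed} into a single-variable polynomial identity. Throughout I would use the standing reduction that all four functions are non-identically zero, and I would work over finitely generated subfields, passing back to $\mathbb{F}$ at the end as in the proof of Theorem \ref{thm_mixed}.

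\emph{Stage~1.} I would apply Theorem \ref{T_mix_fo1} with $n=2$. If its alternative (B) holds we are already in the order-$\le1$ situation; otherwise alternative (A) holds, so there is an index $j\in\{1,2\}$ with $g_j(x)=c_j x$ and with $f_j$ containing a summand of order $K$, where $K$ is the maximal order occurring among $f_1,f_2,g_1,g_2$, and the task is to rule out $K\ge 2$. Using Corollary \ref{cor:alg_ind} I would replace all differential operators by polynomials in a single derivation $d$ and expand \eqref{eq2mixed} in the linearly independent monomials $x^{a_0}\prod_{s\ge 1}(d^s(x))^{a_s}$, graded by \emph{weight} $\sum_{s\ge1}s\,a_s$ and \emph{multiplicity} $\sum_{s\ge1}a_s$. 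The top-weight component of the $j$-th summand $c_j^{q_j}f_j(x^{p_j})x^{q_j}$ has weight $K$ and, by Proposition \ref{prop1}, contains the multiplicity-one monomial $\la_{j,K}c_j^{q_j}p_j\,d^K(x)x^{N-1}$ with nonzero coefficient; hence neither summand may strictly dominate the other in weight, so the $i$-th summand (where $i$ is the other index) also has top weight $K$, i.e. $k_i+q_il_i=K$ with $k_i,l_i$ the orders of $f_i,g_i$, and its weight-$K$ part $\la_{i,k_i}(\tla_{i,l_i})^{q_i}d^{k_i}(x^{p_i})(d^{l_i}(x))^{q_i}$ must cancel the first. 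Every monomial of that product has multiplicity at least $q_i$ when $l_i\ge 1$ and at least one more when $k_i\ge1$, so a multiplicity-one monomial can occur only if $l_i=0$ (forcing $k_i=K$) or $k_i=0,\ q_i=1$ (forcing $g_i$ of order $K$ and $f_i$ linear). In the first case both sides of the weight-$K$ identity have the shape $(\mathrm{const})\,d^K(x^{p})x^{q}$, and comparing (via Proposition \ref{prop1}) the coefficients of $d^K(x)x^{N-1}$ and of $d^{K-1}(x)d(x)x^{N-2}$ gives two scalar relations between $\la_{j,K}c_j^{q_j}$ and $\la_{i,K}\tla_{i,0}^{q_i}$ whose compatibility forces $p_1=p_2$ when both $p_i\ge2$, and forces a nonzero leading constant to vanish when some $p_i=1$ --- a contradiction either way. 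In the second case, C(iii) applied to $p_i=N-1$ gives $p_j\ge2$, so the $j$-th summand's weight-$K$ part has a multiplicity-two monomial with nonzero coefficient while the $i$-th summand's does not, again a contradiction. Hence $K\le1$.

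\emph{Stage~2.} Now, again over a finitely generated subfield with a single $d$, write $f_i(x)=\la_{i,1}d(x)+\la_{i,0}x$ and $g_i(x)=\mu_{i,1}d(x)+\mu_{i,0}x$. Since $x$ and $d(x)$ are algebraically independent (Corollary \ref{cor:alg_ind}), substituting into \eqref{eq2mixed} and putting $u=d(x)$, $v=x$ gives the homogeneous identity $\sum_{i=1,2}(\la_{i,1}p_iv^{p_i-1}u+\la_{i,0}v^{p_i})(\mu_{i,1}u+\mu_{i,0}v)^{q_i}=0$ in $\C[u,v]$, and dehomogenising ($v=1$, $t=u$) yields $\sum_{i=1,2}(\la_{i,1}p_it+\la_{i,0})(\mu_{i,1}t+\mu_{i,0})^{q_i}=0$ in $\C[t]$, a sum of two nonzero polynomials whose degrees are governed by which of $\mu_{i,1},\la_{i,1}$ vanish. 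Because $p_1<p_2$ forces $q_1>q_2$, equality of the two degrees leaves exactly three configurations. If $\mu_{1,1}=\mu_{2,1}=0$ both $g_i$ are linear and the two coefficients of $t$ give the relations of case (C). If exactly one of $\mu_{1,1},\mu_{2,1}$ vanishes --- necessarily $\mu_{1,1}$, since $q_1>q_2$ --- the degree count forces $q_2=1$ (so $p_2=N-1$), $\la_{2,1}=0$, $\la_{1,1}\ne0$, and the two coefficients of $t$ give case (B). If $\mu_{1,1},\mu_{2,1}$ are both nonzero, matching degrees forces the $q$-values to differ by one and exactly one of $f_1,f_2$ to be linear, so one side of the identity is a scalar multiple of a pure power $(\text{linear})^{q}$; a root-multiplicity count then forces the relevant ratios of the $\mu$'s and of the $\la$'s to coincide with a common value $a$, and with $h(x)=d(x)+ax$ this is case (A), the displayed relation between the constants coming from the leading coefficient.

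I expect Stage~1 --- excluding order $\ge 2$ --- to be the delicate part: the weight/multiplicity bookkeeping through Proposition \ref{prop1} has to be carried out carefully, and the boundary cases where some $p_i$ equals $1$ genuinely require condition C(iii), not merely the distinctness of $p_1$ and $p_2$.
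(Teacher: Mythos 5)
Your proposal is correct, and it uses the same toolkit as the paper --- Theorem \ref{T_mix_fo1}, the single-derivation reduction of Corollary \ref{cor:alg_ind}, Proposition \ref{prop1}, and one-variable polynomial identities --- but it reorganizes the proof in a genuinely different way. The paper interleaves the order reduction with the case split: after Theorem \ref{T_mix_fo1} it argues $l_2\le 1$, excludes $K\ge 2$ separately in the branch $l_2=1$ (via the ad hoc sub-cases $K\ge 4$, $K=3$, $K=2$) and in the branch $l_2=0$ (via the two-equation Vandermonde-type system), and then runs three separate polynomial computations, one for each of (A), (B), (C). You instead prove $K\le 1$ once and for all by the weight/multiplicity grading --- your dichotomy ``$l_i=0,\ k_i=K$'' versus ``$k_i=0,\ q_i=1,\ l_i=K$'' is exactly the paper's $l_2=0$/$l_2=1$ split, and your two contradictions reproduce, respectively, the paper's Vandermonde-type argument and its use of C(iii) through $p_j\ge 2$ --- and then you obtain all three cases from the single dehomogenized identity $\sum_{i}(\la_{i,1}p_i t+\la_{i,0})(\mu_{i,1}t+\mu_{i,0})^{q_i}=0$ by a degree count and root-multiplicity comparison, which subsumes the paper's case-(A) root argument and its coefficient relations in (B) and (C). This buys a uniform and shorter exclusion of $K\ge 2$ (the paper's $l_2=1$ sub-case analysis is the murkiest part of its proof) and a unified derivation of the trichotomy, at no essential cost; two small points should just be made explicit. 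First, in Stage 1 the claim that neither summand strictly dominates in weight needs, besides your multiplicity-one monomial, the symmetric remark that the other summand's top-weight part $\la_{i,k_i}\mu_{i,l_i}^{q_i}d^{k_i}(x^{p_i})(d^{l_i}(x))^{q_i}$ is a nonzero combination of monomials, so it could not cancel alone if its weight exceeded $K$. Second, your Case (A) comes out with $q_1=q_2+1$ and the derivation sitting in the $f$ attached to the larger exponent $p$; this is the intended content --- the displayed ``$q_2=q_1+1$ (i.e.\ $p_2=p_1+1$)'' in the statement does not follow the C(i) labelling and is internally inconsistent as written, so do not try to match it literally (similarly, the constant $-\la_{2,0}^{1-q_1}$ in (B) should be checked against your coefficient relations, which give $-\mu_{1,0}^{1-q_1}$).
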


\begin{proof}
 Due to the results of Theorem \ref{T_mix_fo1}, we get that the solutions of \eqref{eq2mixed} are either of the form as in Case (A) or one of $g_i$, say $g_1$, is a linear function, and $f_1$ as a derivation is of order $K$, where $K$ is maximal.
In this case let us denote the orders of $f_2$ and $g_2$ by $k_2$ and $l_2$, respectively, so $K=k_2+l_2q_2$. Hence equation \eqref{eq2mixed} can be written
 $$(\la_{1,K}d^{K}(x^{p_1})+\dots+ \la_{1,0}x^{p_1})(\mu_{1,0}x)^{q_1}+(\la_{2,k_2}d^{k_2}(x^{p_2})+\dots+ \la_{2,0}x^{p_1})(\mu_{2,l_2}d^{l_2}(x)+\dots+\mu_{2,0}x)^{q_2} =0,$$
 where $\la_{1,K}, \la_{2,k_2}, \mu_{2,l_2}$ are nonzero by assumption. The expansion of $d^{K}(x^{p_1})$ must be covered by the expansion of $d^{k_2}(x^{p_2})(d^{l_2}(x))^{q_2}$, otherwise $\la_{1,K}=0$. This immediately implies that $l_2\le 1$. We will show that if $l_2=1$, then in Case (B) happens, and if $l_2=0$ implies Case (C).
 \begin{enumerate}
\item[(A)]
 Equation \eqref{eq2mixed} can be reformulated in case (A) as follows.
$$(\la_{1,1} d(x^{p_1})+\la_{1,0}x^{p_1})(\mu_{1,1} d(x)+\mu_{1,0}x)+\la_{2,0}x^{p_1-1}(\mu_{2,1} d(x)+\mu_{2,0}x)^{q_1+1}, \qquad
 \left(x\in \mathbb{F} \right),$$
 where we assume at least one of $\la_{1,1}, \mu_{1,1}, \mu_{2,1}$ is nonzero. This implies that none of them is 0.
 The equation above is equivalent to
 $$(p_1\la_{1,1} d(x)+\la_{1,0}x)(\mu_{1,1} d(x)+\mu_{1,0}x)+\la_{2,0}(\mu_{2,1} d(x)+\mu_{2,0}x)^{q_1+1}, \qquad
 \left(x\in \mathbb{F} \right).$$
Now we use the algebraic independence of $d(x)$ and $x$, therefore every coefficient has to vanish in the expansion of the previous equation. This also means that we can substitute other polynomially independent elements to the equation, so instead of the pair $(x,d(x))$ we can substitute the pair $(1,y)$. Hence, we get
$$p_1\la_{1,1}\mu_{1,1} \left(y+\frac{\la_{1,0}}{p_1\la_{1,1}}\right)\left(y+\frac{\mu_{1,0}}{ \mu_{1,1}}\right)^{q_1}=-\la_{2,0}\mu_{2,1}\left(y+\frac{\mu_{2,0}}{\mu_{2,1}}\right)^{q_1+1}, \qquad
 \left(y\in \mathbb{F} \right).$$
Note that Since the main coefficients has to be equal we get $p_1\la_{1,1}\mu_{1,1}=- \la_{2,0}\mu_{2,1}$. Thus we can eliminate these terms.
Introducing $a=\frac{\la_{1,0}}{p_1\la_{1,1}}, b=\frac{\mu_{1,0}}{ \mu_{1,1}}$ and $c=\frac{\mu_{2,0}}{\mu_{2,1}}$ we get that
$$(y+a)(y+b)^{q_1}=(y+c)^{q_1+1}.$$
Since the polynomials in $\mathbb{C}$ are uniquely determined by its roots, hence $a=b=c$ should hold. This immediately implies the second part of Case (A).

\item[(B)]
 Recall that $f_1$ as a derivation is of order $K$, $g_1$ is linear function, and the orders of $f_2$ and $g_2$ are $k_2$ and $l_2$, respectively, so $K=k_2+l_2q_2$.
 It is clear that  $p_1=1$ is not possible if $k_2\ne K$ (i.e., $l_2=1$). If $l_2=1$ and $p_1\ne 1$, then we get that $q_2=1$, otherwise we cannot eliminate the coefficient of $d^{k_2}(x)d^{q_2}(x)x^{p_1-2}$ in the expansion of $d^{K}(x^{p_1})$ (here we use that $p_1\ge 2$). Hence we have that $k_2+1=K$
\begin{itemize}
 \item If $K\ge 4$ (and $p_1\ge 2$), then the coefficient of $d^{K-2}(x)d^2{x}x^{p_1-2}$ cannot be eliminated.
\item If $K=3$, then the expansion of $d^3(x^{p_1})x^{q_1}$ and the expansion of $d^{2}(N-1)d(x)$. Since $p_1+q_1=p_2+q_2=N\ge 5$, if $p_1=2$, then $(d(x))^3$ can only appear in the expansion of $d^{2}(x^{N-1})d(x)$, hence $\la_{2,k_2}\cdot \mu_{2,l_2}=0$, which is a contradiction. On the other hand if $p_1\ge 3$, the $d^{3}(x)$ appears only in the expansion of $d^3(x^{p_1})x^{q_1}$ which implies $\la_{1,K}=0$, a contradiction.
\item If $K=2$, then $d^2(x)$ appears only in the expansion of $d^3(x^{p_1})x^{q_1}$ (note that $p_1\ge 2$).
\item If $K=1$, then we get that the solutions are of the form as in Case (B).
 \end{itemize}

As $q_2=1$ and $K=1$, equation \eqref{eq2mixed} in Case (B) can be written of the following form
\[
(\lambda_{1, 1}d(x^{p_1})+ \lambda_{1, 0}x^{p_1})( \mu_{1, 0}x)^{q_1}+\lambda_{2, 0}x^{N-1}(\mu_{1,1}d(x)+ \mu_{2, 0}x)=0
 \qquad
 \left(x\in \mathbb{F}\right),
\]
where at least one (and hence all) $\lambda_{1, 1},\mu_{1,1}$ is nonzero. Equivalently, we have
%Clearly if $q_2\ne 1$, then the expansion of the second term contains $(d(x))^2$ but the first is not, hence $\mu_{1,1}=0$, which is a contradiction.
%Thus $q_2=1$ and the equation can be reformulated as follows.
$$\mu_{1, 0}^{q_1}(p_1\lambda_{1, 1}d(x)+\lambda_{1, 0}x)+\lambda_{2, 0}(\mu_{2,1}d(x)+ \mu_{2, 0}x)=0
 \qquad
 \left(x\in \mathbb{F}\right).$$
% where at least one (and hence all) $\lambda_{1, 1},\mu_{1,1}$ is nonzero.
 Since $p_1\lambda_{1, 1}\mu_{1,0}^{q_1}=-\la_{2,0}\mu_{2,1}$,
 introducing $a=\frac{\lambda_{1, 0}}{p_1\lambda_{1, 1}}$ and $b=\frac{\mu_{2, 0}}{\mu_{2, 1}}$ we get
 $$d(x)+ax=d(x)+bx.$$ Hence $a=b$ and we get all statements of Case (B).
\item[(C)]
 If $l_2=0$, then $K=k_2$, hence $g_2$ is a linear function as well as $g_1$. Thus the equation is of the form
 $$(\la_{1,K}d^{K}(x^{p_1})+\dots+ \la_{1,0}x^{p_1})(\mu_{1,0}x)^{q_1}+(\la_{2,K}d^{K}(x^{p_2})+\dots+ \la_{2,0}x^{p_1})(\mu_{2,0}x)^{q_2} =0,$$
 where $\la_{1,K}, \la_{2,K}, \mu_{1,0},\mu_{2,0}$ are nonzero.

 Suppose that $K\ge 2$. If $\min(p_1,p_2)\ge 2$, then the following system of equations
 $$(\mu_{1,0})^{q_1}\la_{1,K}p_1+(\mu_{2,0})^{q_2}\la_{2,K}p_2=0$$
 $$(\mu_{1,0})^{q_1}\la_{1,K}p_1(p_1-1)+(\mu_{2,0})^{q_2}\la_{2,K}p_2(p_2-1)=0$$ implies that $\mu_{1,0}x\la_{1,K}=\mu_{2,0}\la_{2,K}=0$, which is a contradiction.
 If $\min(p_1,p_2)=1$, say $p_1=1$, then $(d(x))^2$ can only appear in the expansion of $d^{K}(x^{p_2})$, hence $\la_{2,K}=0$ gives a contradiction.
 Hence we get that $K=1$ and the functions are of the form as in Case (C). The rest of the statement clearly follows by direct calculations.
\end{enumerate}
In all cases we showed that every solution (as a higher order derivation) is of order at most $1$.
\end{proof}

Now we turn to the case when more than one exponential appears in the solution. Restricted to each exponential the restricted solutions are equivalent to one of the solutions described in Case (A), Case (B) and Case (C). Hence the task is to decide which are compatible with each other. By a case-by-case argument it can be shown that only the sum of different exponentials are possible as solutions.  In the following proof we study when Case (C) can be compatible with itself (containing different exponentials) and exclude that any of the functions in the solution contain a derivation as a summand. In a similar way the other cases can be excluded, but because of the length of the argument we left it to the reader.

%On the other hand, even this is only possible in a very special situation.

\begin{lem}\label{lem2tag2}
If the solution of \eqref{eq2mixed} contains more than one exponential function, then it contains exactly two. This is possible only if $p_1=2, p_2=1$  i.e., equation  \eqref{eq2mixed} is of the form
\[
f_{1}(x)g_{1}^{N-1}(x)+f_{2}(x^{2})g_{2}^{N-2}(x)=0
\qquad
\left(x\in \mathbb{F}\right).
\]
and the solutions are the following.
%\begin{enumerate}
%\item
\[
\begin{array}{rcl}
f_{1}(x)&=& a_1(\varphi_{1}(x)+c\varphi_{2}(x))\\
f_{2}(x)&=& a_2(\varphi_{2}(x)-c^2\varphi_{1}(x)) \\
g_{1}(x)&=& b_1(\varphi_{1}(x)-c\varphi_{2}(x))\\
g_{2}(x)&=& b_2(\varphi_{1}(x)-c\varphi_{2}(x)),
\end{array}
\qquad
\left(x\in \mathbb{F}^{\times}\right),
\]
where $\varphi_1$ and $\varphi_2$ are distinct automorphisms of $\mathbb{C}$, $c\in \mathbb{C}^{\times}$  and $a_i,b_i\in \mathbb{C}^{\times}$ satisfy $a_1b_1^{N-1}=-a_2b_2^{N-2}$.
%\item \[
%\begin{array}{rcr}
%f_{1}(x)&=& \varphi_{1}(d(x))-\varphi_{2}(d(x)) \\
%f_{2}(x)&=&-2(\varphi_{1}(d(x))-\varphi_{2}(d(x)))\\
%g_{1}(x)&=& \varphi_{1}(x)+\varphi_{2}(x)\\
%g_{2}(x)&=& \varphi_{1}(x)+\varphi_{2}(x)
%\end{array}
%\qquad
%\left(x\in \mathbb{F}^{\times}\right),
%\]
%where $\varphi_1$ and $\varphi_2$ are distinct automorphisms of $\mathbb{C}$ and $d$ is a derivation.
%\end{enumerate}
\end{lem}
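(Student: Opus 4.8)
The plan is to reduce the problem, one exponential at a time, to the classification already established in Theorem~\ref{lem2tag1}, and then to decide which of those elementary solutions can coexist; the decisive step is a factorisation argument for a binary form. \emph{Step 1 (decomposition into scalar exponential blocks).} By Theorem~\ref{thm_decop_mixed2} each of $f_1,f_2,g_1,g_2$ is a generalized exponential polynomial on $\mathbb{F}^{\times}$; passing to a finitely generated subfield we may write $f_i=\sum_{j=1}^{k}P_{i,j}m_j$ and $g_i=\sum_{j=1}^{k}Q_{i,j}m_j$ with pairwise distinct exponentials $m_j$, each occurring in at least one of the eight functions, and polynomials $P_{i,j},Q_{i,j}$. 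Since $m_j(x^{p})=m_j(x)^{p}$, the coefficient of the pure power $m_j^{N}$ in \eqref{eq2mixed} equals $\sum_{i}P_{i,j}(x^{p_i})Q_{i,j}(x)^{q_i}$, which must vanish by Theorem~\ref{T_Poly_Ind}. Hence, for each fixed $j$, the pair $(P_{i,j}m_j,Q_{i,j}m_j)_{i=1,2}$ again solves \eqref{eq2mixed} and is additive (Remark~\ref{rem_vegy_pol}); applying the equivalence $\sim$ of Remark~\ref{rem2.4} to turn the homomorphism $m_j$ into the identity, together with Theorem~\ref{thm_KisLac}, this becomes a solution of derivation type, so Theorem~\ref{lem2tag1} applies to it: each block is of type (A), (B) or (C), or is trivial. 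In particular every $P_{i,j},Q_{i,j}$ has degree at most $1$, so
\[
f_i=\sum_{j=1}^{k}\bigl(\la_{i,j}\,d_j+\al_{i,j}\bigr)m_j,\qquad
g_i=\sum_{j=1}^{k}\bigl(\mu_{i,j}\,d_j+\beta_{i,j}\bigr)m_j,
\]
where $d_j$ is a derivation attached to $m_j$ (possibly $0$) and $\al_{i,j},\beta_{i,j},\la_{i,j},\mu_{i,j}\in\C$.

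\emph{Step 2 (no derivation survives).} Substitute these forms into \eqref{eq2mixed} and expand. By Corollary~\ref{cor:alg_ind} together with Theorem~\ref{T_Poly_Ind} the quantities $m_1,\dots,m_k,d_1,\dots,d_k$ behave as algebraically independent symbols, so any monomial in them appearing with nonzero total coefficient is forbidden. If some $\la_{i,j}$ or $\mu_{i,j}$ were nonzero, one picks the monomial of maximal total degree in the $d$'s; by the same ``isolated leading monomial'' bookkeeping used in the proof of Theorem~\ref{T_mix_fo1}, this monomial is supported in only one of the two summands of \eqref{eq2mixed} and cannot be cancelled — a contradiction. (In the text this is carried out for the pairing of Case~(C) with itself; the remaining pairings of Cases~(A), (B), (C) are treated in exactly the same manner.) Consequently $f_i=\sum_j\al_{i,j}m_j$ and $g_i=\sum_j\beta_{i,j}m_j$, with each $m_j$ a homomorphism.

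\emph{Step 3 (a binary form identity).} Since the $m_j$ are homomorphisms, Lemma~\ref{L_Lin_Ind} lets us regard them as independent indeterminates $X_j$, so \eqref{eq2mixed} becomes the identically vanishing form
\[
\sum_{i=1}^{2}\Bigl(\sum_{j}\al_{i,j}X_j^{p_i}\Bigr)\Bigl(\sum_{j}\beta_{i,j}X_j\Bigr)^{q_i}=0 .
\]
One first reduces to two variables: if a third exponential occurred, restricting one variable to $0$ yields, generically, a two–variable identity falling under the analysis below (which forces $\{p_1,p_2\}=\{1,2\}$), after which the ensuing quadratic $\sum_j\al_{2,j}X_j^{2}$ would have to be a product of two linear forms, i.e.\ have rank $\le 2$, impossible since every $m_j$ occurs somewhere. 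So $k=2$; write the variables $X,Y$ and, in the generic subcase where all eight coefficients are nonzero, set $\ell_i=\beta_{i,1}X+\beta_{i,2}Y$. The diagonal form $\al_{2,1}X^{p_2}+\al_{2,2}Y^{p_2}$ has only simple roots, so comparing the factor $\ell_1^{q_1}$ of the first summand with the second summand forces $\ell_1\parallel\ell_2$ (else $q_1\le1$, and symmetrically $q_2\le1$, impossible since $q_1\neq q_2$). Writing $\ell_2=c\ell_1$ with $c\neq0$, cancelling $\ell_1^{\min(q_1,q_2)}$ and using once more that the remaining right–hand factor has simple roots gives $|q_1-q_2|=1$, hence $p_2=p_1+1$; finally, $(\al_{1,1}X^{p_1}+\al_{1,2}Y^{p_1})\ell_1$ can equal a form $c'X^{p_1+1}+c''Y^{p_1+1}$ with all coefficients nonzero only if the interior monomials $X^{p_1}Y$ and $XY^{p_1}$ coincide, i.e.\ $p_1=1$, $p_2=2$. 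The subcases where some coefficient vanishes reduce, through Theorem~\ref{lem2tag1}, to solutions with a single exponential, contrary to hypothesis. Thus \eqref{eq2mixed} takes the stated form $f_1(x)g_1(x)^{N-1}+f_2(x^2)g_2(x)^{N-2}=0$.

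\emph{Step 4 (reading off the normal form).} With $p_1=1,p_2=2$ the four coefficient equations of the binary identity determine, up to a common scaling and after returning through $\sim$ so that $m_1,m_2$ become distinct automorphisms $\varphi_1,\varphi_2$, all the ratios: one obtains $f_1=a_1(\varphi_1+c\varphi_2)$, $g_1=b_1(\varphi_1-c\varphi_2)$, $g_2=b_2(\varphi_1-c\varphi_2)$ together with the corresponding $f_2$, the single relation $a_1b_1^{N-1}=-a_2b_2^{N-2}$ being precisely the condition that the extreme coefficients of $X^{N}$ and $Y^{N}$ cancel, and a direct substitution confirms that every member of this family solves the equation. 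The main obstacle is Step~2: excluding \emph{every} higher order derivation once two exponentials are present requires the somewhat lengthy, case-by-case ``isolated leading monomial'' analysis over all pairings of Cases~(A)--(C); once the solutions are known to be scalar combinations of homomorphisms, Steps~3 and~4 are routine polynomial algebra.
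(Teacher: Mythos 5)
Your overall strategy is the same as the paper's: split the solutions into exponential blocks via algebraic independence (Theorem \ref{T_Poly_Ind} / Remark \ref{rem_vegy_pol}), apply Theorem \ref{lem2tag1} blockwise, show that no derivation can survive once two exponentials are genuinely present, and then analyse the resulting identity between binary forms in the two homomorphism symbols to force $p_1=1$, $p_2=2$ and the stated normal form (your Steps 3--4 are in substance the paper's substitution $V=1$ followed by the root--multiplicity argument). Steps 1, 3 and 4 are essentially sound, modulo the usual case-checking when some coefficients vanish and the exclusion of a third exponential, which you, like the paper, only sketch.

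The genuine gap is Step 2, which is the heart of the lemma. Your mechanism --- ``pick the monomial of maximal total degree in the $d$'s; it is supported in only one of the two summands and cannot be cancelled'' --- is false in exactly the configurations that matter. Cases (A) and (B) of Theorem \ref{lem2tag1} are built on the fact that the top derivation monomials of the two summands of \eqref{eq2mixed} \emph{do} coincide and cancel: for instance in a type-(A) block the monomial $x^{p_1-1}d(x)^{q_1+1}$ arises both from $f_1(x^{p_1})g_1(x)^{q_1}$ (degree $1+q_1$ in $d$) and from $f_2(x^{p_2})g_2(x)^{q_2}$ (degree $q_2=q_1+1$ in $d$), and the defining relation of Case (A) is precisely the cancellation of these equal leading monomials. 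The same happens for the mixed two-exponential symbols $X=\varphi_1\circ d_1$, $U=\varphi_2\circ d_2$: monomials such as $X^{q_1+1}Y^{p_1-1}$ can occur in both summands with matching exponents, so ``maximal $d$-degree'' alone isolates nothing, and invoking the bookkeeping of Theorem \ref{T_mix_fo1} does not help, since that theorem's conclusion is exactly that such cancellations are possible (alternatives (A)/(B)), not that they are excluded. What actually kills the derivations is the \emph{cross-exponential} coefficient analysis: one must look at monomials coupling the derivation symbol of one exponential with the other exponential, e.g.\ the coefficient of $XY^{p_1-1}V^{q_1}$, which (because $q_1>q_2$) is the single product $p_1\lambda_{1,1}\mu_{2,0}$ and hence forces $\lambda_{1,1}=0$ once $\mu_{2,0}\neq 0$, and similarly for $Y^{q_1}UV^{p_1-1}$, $XY^{p_2-1}V^{q_2}$, $Y^{q_2}UV^{p_2-1}$; this is how the paper proceeds (and even the paper carries it out only for the pairing of Case (C) with itself, the other pairings being left to the reader, where the argument is similar but not identical). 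Your proposal asserts the conclusion of this step but supplies an argument that would not survive the very cancellation patterns classified in Theorem \ref{lem2tag1}, so as written the proof does not go through without redoing Step 2 along these lines.
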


\begin{proof}
Suppose that $f_i$, $g_i$ ($i=1,2$) contain terms depending on two exponentials, i.e., there are two automorphisms $\varphi_1\ne \varphi_2$ of $\mathbb{C}$ and derivations $d_1, d_2$ such that every function is of the form
$c_1 \varphi_1\circ d_1+c_2\varphi_1 +c_3  \varphi_2\circ d_2+c_4\varphi_2$, where
we assume that $\varphi_1\circ d_1\ne \varphi_2\circ d_2$, otherwise we can reduce the previous term.
%Hence $\varphi_1\circ d_1, \varphi_1, \varphi_2\circ d_2,\varphi_2$ are algebraically independent over $\mathbb{C}$. Therefore we can substitute these functions by $X,Y, Z, W$, respectively. Note that in this case $\varphi\circ d_1(x^{p})=p\varphi\circ d_1(x)\cdot \varphi^{p-1}(x)=X\cdot Y^{p-1}$.

Now we just investigate the case when restricting the solutions to $\varphi_1$ or to $\varphi_2$ we get Case (C) in both cases. We show that in this case all functions are the linear combination of automorphisms, i.e., they do not contain nontrivial derivations. For other pairs of cases the argument is similar, but slightly different. Those we left to the reader.

Hence we assume that $$f_1=\la_{1,1} \varphi_1\circ d_1+\la_{1,0}\varphi_1 +\la_{2,1}  \varphi_2\circ d_2+\la_{2,0}\varphi_2, \ \ g_1=\mu_{1,0} \varphi_1+\mu_{2,0}\varphi_2,$$
$$f_1=\tilde{\la}_{1,1} \varphi_1\circ d_1+\tilde{\la}_{1,0}\varphi_1 +\tilde{\la}_{2,1}  \varphi_2\circ d_2+\tilde{\la}_{2,0}\varphi_2 \ \ g_1=\tilde{\mu}_{1,0} \varphi_1+\tilde{\mu}_{2,0}\varphi_2,$$
where $\mu_{1,0},\mu_{2,0},\tilde{\mu}_{1,0},\tilde{\mu}_{2,0}$ are nonzero and for each $i=1,2$ at least one of $\la{i,0}, \la{i,1}$ is non-zero. Similar holds for $\tilde{\la}_{i,j} (i=1,2, j=1,2)$. These conditions are necessary otherwise \eqref{eq2mixed} reduces to Cases (A), (B) or (C).
%Now we show that in this case \la_{1,1}=\la_{2,1}=\tilde{\la}_{1,1}=

If $\varphi_1\ne \varphi_2$ and $\varphi_1\circ d_1\ne \varphi_2\circ d_2$, then $\varphi_1\circ d_1, \varphi_1, \varphi_2\circ d_2,\varphi_2$ are algebraically independent over $\mathbb{C}$. Therefore we can substitute them functions by $X,Y, Z, W$, respectively. Note that in this case $\varphi\circ d_1(x^{p})=p\varphi\circ d_1(x)\cdot \varphi^{p-1}(x)=X\cdot Y^{p-1}$.

In this case equation \eqref{eq2mixed} can be reformulated as follows.
\begin{equation}\label{eqexps1}
\begin{split}
    &(p_1\la_{1,1} X Y^{p_1-1}+\la_{1,0}Y^{p_1}+p_1\la_{2,1} UV^{p_1-1}+\la_{2,0}V^{p_1})(\mu_{1,0}Y+\mu_{2,0}V)^{q_1}+\\
     &(p_2\tilde{\la}_{1,1} X Y^{p_2-1}+\tilde{\la}_{1,0}Y^{p_2}+p_2\tilde{\la}_{2,1} UV^{p_2-1}+\tilde{\la}_{2,0}V^{p_2})(\tilde{\mu}_{1,0}Y+\tilde{\mu}_{2,0}V)^{q_2}=0.
\end{split}
\end{equation}
Without loss of generality we can assume that $p_1<p_2$.
Now we take the coefficient of $X Y^{p_1-1}V^{q_1}$. As $p_1<p_2$, we have $q_1>q_2$ and hence this term appears only once with coefficient $p_1\la_{1,1}\mu_{2,0}$ which is then vanishes. Hence, $\la_{1,1}=0$ ($\mu_{2,0}\ne 0$ was assumed). Similar argument for $Y^{q_1}U V^{p_1-1}$ shows that $\tilde{\la}_{1,1}=0$.
Repeating the previous argument now for $X Y^{p_2-1}V^{q_2}$ and $Y^{q_2}U V^{p_2-1}$ implies that $\la_{2,1}=0$ and $\tilde{\la}_{2,1}=0$. Hence all $f_i$ are the linear combination of $\varphi_1$ and $\varphi_2$.

We show that $p_1=1, p_2=2$ and the rest of the statement. By substituting $V=1$  we get the following equation, we get the following equation.
\begin{equation}\label{eqexps2}
%\begin{split}
 %   (\la_{1,0}Y^{p_1}+\la_{2,0}V^{p_1})(\mu_{1,0}Y+\mu_{2,0}V)^{q_1}+
 %    (\tilde{\la}_{1,0}Y^{p_2}+\tilde{\la}_{2,0}V^{p_2})(\tilde{\mu}_{1,0}Y+\tilde{\mu}_{2,0}V)^{q_2}=0.
%\end{split}
 (\la_{1,0}Y^{p_1}+\la_{2,0})(\mu_{1,0}Y+\mu_{2,0})^{q_1}=
 - (\tilde{\la}_{1,0}Y^{p_2}+\tilde{\la}_{2,0})(\tilde{\mu}_{1,0}Y+\tilde{\mu}_{2,0})^{q_2}=0.
\end{equation}
This is a polynomial equation in $Y$ over $\mathbb{C}$. First we note that $P_1(Y)=\la_{1,0}Y^{p_1}+\la_{2,0}$ (and reps. $P_2(Y)=\tilde{\la}_{1,0}Y^{p_2}+\tilde{\la}_{2,0}$) has no root with multiplicity greater than 1, since $(P_i(Y), P_i'(Y))=1$ ($i=1,2$). On the other hand, $Q_1(Y)=
 (\mu_{1,0}Y+\mu_{2,0})^{q_1}$ (and  resp. $Q_2(Y)=(\tilde{\mu}_{1,0}Y+\tilde{\mu}_{2,0})^{q_2}$) has only one root with multiplicity $q_1$ (resp. $q_2$). These immediately implies that $q_2+1=q_1$ (noting that $p_1<p_2$ and $p_1+q_1=p_2+q_2$). This means that one root of $P_2(Y)$ is the same as the root of $Q_1(Y)$ and the root of $Q_2(Y)$. The other roots are the same as the roots of $P_1(Y)$. However, the roots of $P_1(Y)$ and $P_2(Y)$ are constant multiples of $p_1$'th and $(p_1+1)=p_2$'th roots of unities, respectively. They can be equal only if $p_2=2$, hence $p_1=1$. Thus $q_1=N-1, q_2=N-2$.

 In this case equation \eqref{eqexps2} is of the following form
 \begin{equation*}\label{eqexps3}
 (\la_{1,0}Y+\la_{2,0})(\mu_{1,0}Y+\mu_{2,0})^{N-1}=
 - (\tilde{\la}_{1,0}Y^2+\tilde{\la}_{2,0})(\tilde{\mu}_{1,0}Y+\tilde{\mu}_{2,0})^{N-2}=0,
\end{equation*}
where $\frac{\mu_{2,0}}{\mu_{1,0}}=\frac{\tilde{\mu_{2,0}}}{\tilde{\mu}_{1,0}}.$
Introducing $c_1=\frac{\la{2,0}}{\la_{1,0}}, c_2=\frac{\mu_{2,0}}{\mu_{1,0}}=\frac{\tilde{\mu}_{2,0}}{\tilde{\mu}_{1,0}}, c_3=\frac{\tilde{\la}_{2,0}}{\tilde{\la}_{1,0}}$ we get
$$\la_{1,0}\mu_{1,0}^{N-1}(Y+c_1)(Y+c_2)=-\tilde{\la}_{1,0}\tilde{\mu_{1,0}}^{N-2}(Y^2+c_3).$$
Hence $\la_{1,0}\mu_{1,0}^{N-1}=-\tilde{\la}_{1,0}\tilde{\mu}_{1,0}^{N-2}$ and $c_1=-c_2$ and $c_3=-c_1^2$.
This means that all solutions of \eqref{eq2mixed} that contains two exponentials are
$$f_1=a_1(\varphi_1+c\varphi_2), g_1=b_1(\varphi_2-c\varphi_2), f_2=a_2(\varphi_1-c^2\varphi_2),  g_2=b_2(\varphi_2-c\varphi_2),$$
where $c\in \mathbb{C}$ is arbitrary and $a_i,b_i\in \mathbb{C}$ satisfies $a_1b_1^{N-1}=-a_2b_2^{N-2}$ as we stated.

It is simple to prove using the previous result that there is no solution containing three exponentials.
\end{proof}

In the following two special cases are presented as illustrations of our results. %alternative (A) is not possible, which simplifies the situation. In spite of the fact that the solutions of the form $P_i(x)\cdot x$ may imply include more than one linearly independent derivations, it is straightforward to verify that this is not possible in the  special case, i.e., as in case (B) only one derivation can appear in the solutions of \eqref{eq2mixed}.

\begin{cor}
 Let $N$ be a positive integer, $\mathbb{F}\subset \mathbb{C}$ be a field and  $p, q$ be different positive integers (strictly) less than $N$ and assume that $q\neq N-p$, let further $\kappa$ be a nonzero complex number. Suppose that the additive functions $f, g\colon \mathbb{F}\to \mathbb{C}$ are not identically zero and %that are generalized exponential polynomials of the form $P_im_i$ in $\mathbb{F}^{\times}$ and
 satisfy
 \[
  f(x^{p})g(x)^{N-p}=\kappa f(x^{q})g(x)^{N-q}
  \qquad
  \left(x\in \mathbb{F}\right),
 \]
then one of the following alternatives are possible
\begin{enumerate}
    \item[(A)] there exist a derivation $d\colon \mathbb{F}\to \mathbb{C}$ and nonzero complex constants $\lambda_{0}, \lambda_{1}, \mu_{0}$
such that
\[
 f(x) \sim \lambda_{1}d(x)+\lambda_{0}x
 \qquad
 \text{and}
 \qquad
 g(x) \sim \mu_{0}x
 \qquad
 \left(x\in \mathbb{F}\right),
\]
where the above constants have even fulfill that
\[
 \lambda_{0}\mu_{0}^{N-p}(1-\kappa \mu_{0}^{p-q})=0
 \qquad
 \text{and}
 \qquad
 \lambda_{1}\mu_{0}^{N-p}(p-\kappa q \mu_{0}^{p-q})=0
\]
\item[(B)] there exist nonzero complex constants $\lambda_{0}, \mu_{0}$
such that
\[
 f(x) \sim \lambda_{0}x
 \qquad
 \text{and}
 \qquad
 g(x) \sim \mu_{0}x
 \qquad
 \left(x\in \mathbb{F}\right),
\]
where the above constants have even fulfill that
\[
 \lambda_{0}\mu_{0}^{N-p}(1-\kappa \mu_{0}^{p-q})=0
% \qquad
 %\text{and}
 %\qquad
 %\lambda_{1}\mu_{0}^{N-p}(p-\kappa q \mu_{0}^{p-q})=0
\]
\item[(C)]
\[
f(x)=a(\varphi_1(x)-\varphi_2(x))
 \qquad
 \text{and}
 \qquad
 g(x)=b(\varphi_1(x)+\varphi_2(x)),\]
where $\varphi_1, \varphi_2\colon \mathbb{F}\to \mathbb{C}$ are arbitrary but distinct field homomorphisms, $a,b\in \mathbb{C}^{\times}$ such that $a-\kappa b=0$ and if $p_1<p_2$, then $p_1=1, p_2=2$.
\end{enumerate}
\end{cor}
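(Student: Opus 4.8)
The plan is to recognise the equation as the two‑summand instance of \eqref{eq2mixed} and then to read off the answer from the classifications already established in Theorem~\ref{lem2tag1} and Lemma~\ref{lem2tag2}. First I would reduce: assuming (after dividing by $\kappa$ and relabelling the two terms, which merely replaces $\kappa$ by $\kappa^{-1}$) that $p<q$, I would set
\[
f_{1}=f,\quad g_{1}=g,\quad p_{1}=p,\quad q_{1}=N-p,\qquad f_{2}=-\kappa f,\quad g_{2}=g,\quad p_{2}=q,\quad q_{2}=N-q,
\]
so that \eqref{eq2mixed} holds, and the conditions C(i)--C(iii) are verified directly from $p\ne q$ and $q\ne N-p$. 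By Lemma~\ref{lem_decop_mixed} and Theorem~\ref{thm_decop_mixed2}, $f$ and $g$ are then generalized exponential polynomials on $\mathbb{F}^{\times}$, and following Remark~\ref{rem_vegy_pol} I would split the argument according to whether their representations involve a single exponential or more than one.

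In the single‑exponential case I would use the equivalence relation $\sim$ of Remark~\ref{rem2.4} together with Theorem~\ref{thm_KisLac}, exactly as in Remark~\ref{rem_vegy_pol}, to reduce to the situation where $f$ and $g$ are higher order derivations solving \eqref{eq2mixed}; Theorem~\ref{lem2tag1} then lists all admissible quadruples. The crucial observation is that here $g_{1}=g_{2}$ and $f_{2}=-\kappa f_{1}$ with $\kappa\ne 0$: in alternatives (A) and (B) of Theorem~\ref{lem2tag1} the function $f_{2}$ is a scalar multiple of the identity while $f_{1}$ has a nonzero first‑order part, and since the identity and a nonzero derivation are linearly independent over $\mathbb{C}$, these two alternatives cannot occur. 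Hence only alternative (C) of Theorem~\ref{lem2tag1} survives, so $g_{1},g_{2}$ are linear and $f_{1},f_{2}$ have order at most $1$; substituting $\mu_{1,0}=\mu_{2,0}=:\mu_{0}$ and $\lambda_{2,j}=-\kappa\lambda_{1,j}$ into the two scalar relations of (C) turns them into exactly the two displayed identities of alternative~(A) of the corollary (with $\lambda_{1}=\lambda_{1,1}$, $\lambda_{0}=\lambda_{1,0}$), the sub‑case $\lambda_{1,1}=0$ producing alternative~(B). Translating back through $\sim$ then yields the stated forms of $f$ and $g$.

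In the case where more than one exponential appears I would invoke Lemma~\ref{lem2tag2}, which forces exactly two exponentials, $\{p_{1},p_{2}\}=\{1,2\}$, and the explicit shapes $f_{1}=a_{1}(\varphi_{1}+c\varphi_{2})$, $f_{2}=a_{2}(\varphi_{2}-c^{2}\varphi_{1})$, $g_{i}=b_{i}(\varphi_{1}-c\varphi_{2})$ subject to $a_{1}b_{1}^{N-1}=-a_{2}b_{2}^{N-2}$. Imposing $g_{1}=g_{2}$ gives $b_{1}=b_{2}$, and imposing $f_{2}=-\kappa f_{1}$ couples $a_{1},a_{2},c$ and $\kappa$; absorbing the free constants I would rewrite the solution in the normalised form $f=a(\varphi_{1}-\varphi_{2})$, $g=b(\varphi_{1}+\varphi_{2})$ with a single relation linking $a,b,\kappa$, which is precisely alternative~(C), together with the conclusion $p_{1}=1$, $p_{2}=2$.

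I expect the proof to be essentially bookkeeping, since all the structural work is already contained in Theorem~\ref{lem2tag1} and Lemma~\ref{lem2tag2}; the points to be careful about are the correct matching of the two ordered summands of the equation with those of \eqref{eq2mixed}, the careful tracking of the equivalence $\sim$ so that the bare constants $\lambda_{0},\lambda_{1},\mu_{0}$ (respectively $a,b$) emerge in the claimed normalisation, and the constant comparison in the two‑exponential branch. The only step that genuinely needs checking is that the enforced proportionality $f_{2}=-\kappa f_{1}$ rules out alternatives (A) and (B) of Theorem~\ref{lem2tag1} regardless of which of $p,q$ is the smaller one; this holds because proportionality is a symmetric relation, so the same argument applies after the relabelling.
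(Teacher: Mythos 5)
Your overall route is the intended one: the paper offers no separate proof of this corollary, presenting it as a direct specialization of Theorem~\ref{lem2tag1} and Lemma~\ref{lem2tag2} to the two–summand form $f(x^{p})g(x)^{N-p}-\kappa f(x^{q})g(x)^{N-q}=0$ of \eqref{eq2mixed}, and your single–exponential branch does exactly what is needed: since $f_{1}$ and $f_{2}$ are nonzero multiples of one another and $g_{1}=g_{2}$, alternatives (A) and (B) of Theorem~\ref{lem2tag1} are impossible, and substituting $\mu_{1,0}=\mu_{2,0}=\mu_{0}$, $\lambda_{2,j}=-\kappa\lambda_{1,j}$ into the two scalar relations of its case (C) yields precisely the displayed identities of alternatives (A)/(B) of the corollary. (One small caveat you gloss over: Theorem~\ref{lem2tag1} is formulated with $p_{1},p_{2},q_{1},q_{2}$ \emph{all} distinct, and the corollary's hypotheses $p\neq q$, $q\neq N-p$ do not exclude $p=N-p$ or $q=N-q$; this mismatch is inherited from the paper's formulation, but it is not ``verified directly'' as you claim.)

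The genuine gap is in the two–exponential branch. Imposing $f_{2}=-\kappa f_{1}$ on the parametrization of Lemma~\ref{lem2tag2}, i.e.\ on $f_{1}=a_{1}(\varphi_{1}+c\varphi_{2})$, $f_{2}=a_{2}(\varphi_{2}-c^{2}\varphi_{1})$, and comparing coefficients of the linearly independent homomorphisms gives $a_{2}=-\kappa a_{1}c$ and $a_{2}c^{2}=\kappa a_{1}$, hence only $c^{3}=-1$; this does not by itself force $c=-1$, and for $c=e^{\pm i\pi/3}$ your ``absorbing the free constants'' step fails, because $c\varphi_{2}$ is not a field homomorphism, so the solution cannot be rewritten in the normalised shape $f=a(\varphi_{1}-\varphi_{2})$, $g=b(\varphi_{1}+\varphi_{2})$ of alternative (C) by relabelling alone. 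You must additionally rule out the primitive sixth roots of unity. The cleanest repair is to use Lemma~\ref{lem2tag2} only for the structural information ($\{p_{1},p_{2}\}=\{1,2\}$, exactly two exponentials, no derivation summands), write $f=\alpha_{1}\varphi_{1}+\alpha_{2}\varphi_{2}$, $g=\beta_{1}\varphi_{1}+\beta_{2}\varphi_{2}$, and exploit the algebraic independence of $\varphi_{1},\varphi_{2}$: cancelling the common factor $(\beta_{1}\varphi_{1}+\beta_{2}\varphi_{2})^{N-2}$ reduces the functional equation to a quadratic polynomial identity whose coefficientwise comparison forces $\beta_{1}=\beta_{2}$ and $\alpha_{2}=-\alpha_{1}$ together with the relation tying the remaining constants to $\kappa$; this both eliminates $c\neq-1$ and produces the normalisation of alternative (C) that you asserted.
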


\begin{cor}
 Let $N$ be a positive integer, $\mathbb{F}\subset \mathbb{C}$ be a field and  $p, q$ be different positive integers (strictly) less than $N$ and assume that $q\neq N-p$, let further $\kappa$ be a nonzero complex number. If the additive functions $f, g\colon \mathbb{F}\to \mathbb{C}$ %that are generalized exponential polynomials of the form $P_im_i$ in $\mathbb{F}^{\times}$ and
 satisfy
 \[
  f(x^{p})f(x)^{N-p}=\kappa g(x^{q})g(x)^{N-q}
  \qquad
  \left(x\in \mathbb{F}\right),
 \]
then there exist nonzero complex constants $\lambda_{0}, \mu_{0}$
such that
\[
 f(x) \sim \lambda_{0} x
 \qquad
 \text{and}
 \qquad
 g(x) \sim \mu_{0}x
 \qquad
 \left(x\in \mathbb{F}\right).
\]
Further $\kappa =1$, otherwise $f$ and $g$ are identically zero.
\end{cor}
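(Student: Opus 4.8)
The plan is to read the equation as a tightly constrained instance of the two-summand equation \eqref{eq2mixed} and then invoke the classification already obtained for it. Moving everything to one side, $f(x^{p})f(x)^{N-p}-\kappa g(x^{q})g(x)^{N-q}=0$ has the shape $\sum_{i=1}^{2}f_{i}(x^{p_{i}})g_{i}(x)^{q_{i}}=0$; assuming $p<q$ (the case $p>q$ is symmetric, with $f$ and $g$ interchanged and $\kappa$ replaced by $\kappa^{-1}$) and setting $p_{1}=p$, $p_{2}=q$, we are in the case $f_{1}=g_{1}=f$, $g_{2}=g$, $f_{2}=-\kappa g$, $q_{i}=N-p_{i}$, and conditions C(i)--C(iii) hold precisely because $p\ne q$ and $p+q\ne N$. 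The decisive structural feature is that \emph{within each summand the two additive functions are proportional}: $f_{1}=1\cdot g_{1}$ and $f_{2}=(-\kappa)\cdot g_{2}$. Thus this is exactly the situation of Corollary \ref{cor9} with $n=2$.

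First I would strip off the exponential part. By Lemma \ref{lem_decop_mixed} and Theorem \ref{thm_decop_mixed2} every solution is a generalized exponential polynomial on $\mathbb{F}^{\times}$; by Remark \ref{rem_vegy_pol} it suffices to treat solutions supported on a single exponential $m$, and after composing all functions with a suitable automorphism ($\sim$, Lemma \ref{lem_equ_mixed}) we may take $m=j$, so that $f,g$ become higher order derivations on each finitely generated subfield. The genuinely two-exponential solutions furnished by Lemma \ref{lem2tag2} cannot occur here: there $f_{1}=a_{1}(\varphi_{1}+c\varphi_{2})$ and $g_{1}=b_{1}(\varphi_{1}-c\varphi_{2})$, and the forced identity $f_{1}=g_{1}$ yields $a_{1}=b_{1}$ together with $a_{1}c=-b_{1}c$, hence $c=0$, contradicting $c\in\mathbb{C}^{\times}$.

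Next I would feed the reduced (one-exponential) solutions into Theorem \ref{lem2tag1}, equivalently Corollary \ref{cor9}. In alternative (A) one of the $g_{i}$ carries a nonzero first-order derivation while the matching $f_{i}$ is purely linear, impossible when $f_{i}=c_{i}g_{i}$ with $c_{i}\ne0$; in alternative (B) $f_{1}$ carries a derivation while $g_{1}=\mu_{1,0}x$ is linear, contradicting $f_{1}=g_{1}$. Only alternative (C) remains, and imposing $f_{1}=g_{1}$ and $f_{2}=-\kappa g_{2}$ forces all the derivation coefficients to vanish: up to $\sim$ both $f$ and $g$ are scalar multiples of the identity, i.e. $f=\lambda\varphi$ and $g=\mu\varphi$ for a common field homomorphism $\varphi\colon\mathbb{F}\to\mathbb{C}$ and $\lambda,\mu\in\mathbb{C}$. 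This is precisely $f\sim\lambda_{0}x$ and $g\sim\mu_{0}x$.

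Finally, substituting $f=\lambda\varphi$ and $g=\mu\varphi$ back into the equation gives $\lambda^{N-p+1}\varphi(x)^{N}=\kappa\mu^{N-q+1}\varphi(x)^{N}$ for all $x\in\mathbb{F}$; since $\varphi(1)=1$ the exponential $\varphi^{N}$ is not identically zero, whence $\lambda^{N-p+1}=\kappa\mu^{N-q+1}$. If $\lambda=0$ or $\mu=0$ then, using $\kappa\ne0$, the other constant vanishes as well, so $f\equiv g\equiv0$; otherwise $\lambda,\mu\in\mathbb{C}^{\times}$, and this relation between $\kappa,\lambda_{0},\mu_{0}$ is what is left to record (in particular, once the constants are normalised through $\sim$ it reads $\kappa=1$, and for a $\kappa$ admitting no such factorisation only the zero solution survives). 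I expect the main obstacle to be the case analysis excluding alternatives (A), (B) and the two-exponential family: there one must combine the per-summand proportionality $f_{i}\parallel g_{i}$ with the parameter conditions C(i)--C(iii) to show that a derivation term appearing in any $f_{i}$ or $g_{i}$ cannot be cancelled. By contrast the decomposability and reduction steps, and the closing substitution, are routine consequences of the results cited above.
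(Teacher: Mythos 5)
Your main reduction is exactly the route the paper intends for this corollary (which it states without a separate proof): rewrite the equation as the two-summand instance of \eqref{eq2mixed} with $f_1=g_1=f$ and $f_2=-\kappa g$, $g_2=g$, note that in each summand $f_i=c_i g_i$ with $c_i\neq 0$, discard the two-exponential family of Lemma \ref{lem2tag2} because the proportionality in the summand with inner exponent $1$ forces $c=0$, and let the per-summand proportionality kill alternatives (A) and (B) of Theorem \ref{lem2tag1} (equivalently, invoke Corollary \ref{cor9}), so that only the all-linear case (C) survives and $f\sim\lambda_0x$, $g\sim\mu_0x$. Up to this point your argument is correct and matches the intended derivation (the only caveat, inherited from the paper's own hypotheses rather than from you, is that Theorem \ref{lem2tag1} is formulated with $p_1,p_2,q_1,q_2$ all distinct, which additionally requires $N\neq 2p$ and $N\neq 2q$).

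The genuine gap is the final clause ``$\kappa=1$, otherwise $f$ and $g$ are identically zero''. From $f=\lambda\varphi$, $g=\mu\varphi$ you correctly obtain $\lambda^{N-p+1}=\kappa\mu^{N-q+1}$, but neither of the remarks you attach to this closes the claim: over $\mathbb{C}$ every nonzero $\kappa$ admits such a factorisation (take $\mu=1$ and $\lambda$ any $(N-p+1)$-st root of $\kappa$), so the case of ``a $\kappa$ admitting no such factorisation'' is empty; and the equivalence $\sim$ cannot normalise $\lambda_0$ and $\mu_0$ to $1$, since composing the whole solution with an automorphism $\psi$ sends $(\lambda_0,\mu_0)$ to $(\psi(\lambda_0),\psi(\mu_0))$ and replaces $\kappa$ by $\psi(\kappa)$. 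Concretely, for $\mathbb{F}=\mathbb{Q}$, $N=10$, $p=2$, $q=3$ (so $q\neq N-p$ and all four exponents are distinct) the nonzero additive functions $f(x)=2^{1/9}x$, $g(x)=x$ satisfy the hypothesis with $\kappa=2$, so the clause can only hold in the normalised reading $\lambda_0=\mu_0=1$ (i.e.\ when $f$ and $g$ are themselves homomorphisms, in which case $x=1$ gives $\kappa=1$). Rather than asserting the normalisation, you should record the relation $\lambda_0^{N-p+1}=\kappa\mu_0^{N-q+1}$ explicitly, in the spirit of the constraint displayed in the preceding corollary, and flag that the ``$\kappa=1$'' assertion requires this extra normalisation; as written, your parenthetical claims something that your own computation contradicts.
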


\section*{Appendix}

\begin{proof}[Proof of Lemma \ref{lemma_homogenization}]
 Let $n$ be a positive integer, $\mathbb{F}\subset \mathbb{C}$ be a field and
 $p_{1}, \ldots, p_{n}, q_{1}, \ldots, q_{n}$ be fixed positive integers.

Assume that the additive functions $f_{1}, \ldots, f_{n}, g_{1}, \ldots, g_{n}\colon \mathbb{F}\to \mathbb{C}$ satisfy functional equation \eqref{Eq_mixed}
 for each $x\in \mathbb{F}$.
 Assume further that the set $\left\{ p_{1}, \ldots, p_{n}\right\}$ has a partition $\mathcal{P}_{1}, \ldots, \mathcal{P}_{k}$ with the property
 \[
 \text{if } p_{\alpha}, p_{\beta} \in \mathcal{P}_{j} \text{ for a certain index $j$, then } p_{\alpha}+q_{\alpha}= p_{\beta}+q_{\beta}.
 \]
 Observe that for all $i=1, \ldots, n$, the mapping
 \[
  \mathbb{F}\ni x\longmapsto f_{i}(x^{p_{i}})g_{i}(x))^{q_{i}}
 \]
is a generalized monomial of degree $p_{i}+q_{i}$. Indeed, it is the diagonalization of the symmetric
$(p_{i}+q_{i})$-additive mapping
\[
 \mathbb{F}^{p_{i}+q_{i}} \ni (x_{1}, \ldots, x_{p_{i}+q_{i}})
 \longmapsto
 f_{i}(x_{\sigma(1)}\cdots x_{\sigma(p_{i})})g_{i}(x_{\sigma(p_{i}+1)}
 \cdots g_{i}( x_{\sigma(p_{i}+q_{i})}).
\]
Since $\mathbb{F}\subset \mathbb{C}$, we necessarily have $\mathbb{Q}\subset \mathbb{F}$.
Let now $r\in \mathbb{Q}$ be arbitrary and substitute $rx$ in place of $x$ in equation \eqref{Eq_mixed} to get
\[
 \sum_{i=1}^{n}f_{i}((rx)^{p_{i}})g_{i}(rx)^{q_{i}}= 0
 \qquad
 \left(r\in \mathbb{Q}, x\in \mathbb{F}\right).
\]
Using the $\mathbb{Q}$-homogeneity of the additive functions $f_{1}, \ldots, f_{n}$ and $g_{1}, \ldots, g_{n}$, we deduce
\begin{multline*}
 0=
 \sum_{i=1}^{n}f_{i}((rx)^{p_{i}})g_{i}(rx)^{q_{i}}=
 \sum_{i=1}^{n}f_{i}(r^{p_{i}}x^{p_{i}})(r g_{i}(x))^{q_{i}}=
 \sum_{i=1}^{n}r^{p_{i}+q_{i}}f_{i}(x^{p_{i}})g_{i}(x)^{q_{i}}
 \\
 =
 \sum_{j=1}^{k} \sum_{p_{\alpha}\in \mathcal{P}_{j}}r^{p_{\alpha}+q_{\alpha}}  f_{\alpha}(x^{p_{\alpha}})g_{\alpha}(x)^{q_{\alpha}}
 \qquad
 \left(r\in \mathbb
 Q, x\in \mathbb{F}\right).
\end{multline*}
Note that the right hand side of this equation is a (classical) polynomial in $r$ which is identically zero. Thus all of its coefficients should be (identically) zero, yielding that the system of equations
\[
 \sum_{p_{\alpha}\in \mathcal{P}_{j}} f_{\alpha}(x^{p_{\alpha}})g_{\alpha}(x))^{q_{\alpha}}=0
 \qquad
 \left(x\in \mathbb{F}, j=1, \ldots, k\right)
\]
is fulfilled.
\end{proof}

\begin{proof}[Proof of Lemma \ref{lem_sym_mixed}]
 Let $n$ be a positive integer, $\mathbb{F}\subset \mathbb{C}$ be a field and
 $p_{1}, \ldots, p_{n}, q_{1}, \ldots, q_{n}$ be fixed positive integers fulfilling conditions C(ii).
 Assume that the additive functions $f_{1}, \ldots, f_{n}, g_{1}, \ldots, g_{n}\colon \mathbb{F}\to \mathbb{C}$ satisfy functional equation
 \[
  \sum_{i=1}^{n}f_{i}(x^{p_{i}})g_{i}(x)^{q_{i}}= 0
 \]
 for each $x\in \mathbb{F}$. Due to the additivity of the functions
 $f_{1}, \ldots, f_{n}$ and $g_{1}, \ldots, g_{n}$ for all $i=1, \ldots, n$, the mapping
 \[
  x\longmapsto f_{i}(x^{p_{i}})g_{i}(x)^{q_{i}}
 \]
 is a monomial of degree $p_{i}+q_{i}=N$. Further, it is the trace of the symmetric and $N$-additive mapping
\[
F_{i}(x_{1}, \ldots, x_{N})
= \frac{1}{N!} \sum_{\sigma \in \mathscr{S}_{N}} f_{i}(x_{\sigma(1)} \cdots x_{\sigma(p_{i})}) \cdot g_{i}(x_{\sigma(p_{i}+1)} \cdots g_{i}(x_{\sigma(N)})
 \qquad
 \left(x_{1}, \ldots, x_{N}\in \mathbb{F}\right).
\]
Therefore, the statement follows from Lemma \ref{lem_monom}.
\end{proof}

\begin{ackn}
The research of E.~Gselmann has been supported by project no.~K134191 that has been implemented
with the support provided by the National Research, Development and Innovation Fund of Hungary,
financed under the K\_20 funding scheme.

The research of G.~Kiss has been supported by projects no.~K124749  and no.~K142993 of the National Research, Development and Innovation Fund of Hungary. The author have supported by János Bolyai Research Fellowship of the Hungarian Academy of Sciences and ÚNKP-22-5 New National Excellence Program of the Ministry for Culture and
Innovation.
\end{ackn}

\bibliographystyle{plain}

\end{document}